\newtheorem{thmintro}{Theorem}
\newtheorem{corintro}[thmintro]{Corollary}
\newtheorem{theorem}{Theorem}[section]
\newtheorem{corollary}[theorem]{Corollary}
\newtheorem{lemma}[theorem]{Lemma}
\newtheorem{prop}[theorem]{Proposition}
\theoremstyle{definition}
\newtheorem{remark}[theorem]{Remark}
\newtheorem{example}[theorem]{Example}
\newcommand{\CC}{\mathbb{C}}
\newcommand{\NN}{\mathbb{N}}
\newcommand{\QQ}{\mathbb{Q}}
\newcommand{\ZZ}{\mathbb{Z}}
\newcommand{\LLL}{\mathcal{L}}
\newcommand{\WW}{\mathcal{W}}
\newcommand{\g}{\mathfrak{g}}
\newcommand{\sll}{\mathfrak{sl}}
\newcommand{\hh}{\mathfrak{h}}
\newcommand{\nn}{\mathfrak{n}}
\newcommand{\inv}{^{-1}}
\newcommand{\la}{\langle}
\newcommand{\ra}{\rangle}
\newcommand{\co}{\colon\thinspace}
\newcommand{\bra}{[\![}
\newcommand{\ket}{]\!]}
\DeclareMathOperator{\ad}{ad}
\DeclareMathOperator{\Aut}{Aut}
\DeclareMathOperator{\GL}{GL}
\DeclareMathOperator{\height}{ht}
\DeclareMathOperator{\supp}{supp}
\numberwithin{equation}{section}
\begin{document}

\renewcommand{\proofname}{{\bf Proof}}

\title{On the structure of Kac--Moody algebras}
\author{Timoth\'ee \textsc{Marquis}$^*$}
\thanks{$^*$ F.R.S.-FNRS post-doctoral researcher.} 
\address{Universit\'e catholique de Louvain, IRMP, Chemin du Cyclotron 2, bte L7.01.02, 1348 Louvain-la-Neuve, Belgique}
\email{timothee.marquis@uclouvain.be}
\subjclass[2010]{17B67 (primary), 17B30 (secondary)}

\begin{abstract}
Let $A$ be a symmetrisable generalised Cartan matrix, and let $\g(A)$ be the corresponding Kac--Moody algebra. In this paper, we address the following fundamental question on the structure of $\g(A)$: given two homogeneous elements $x,y\in\g(A)$, when is their bracket $[x,y]$ a nonzero element? As an application of our results, we give a description of the solvable and nilpotent graded subalgebras of $\g(A)$.
\end{abstract}

\maketitle

\section{Introduction}
By a theorem of J.-P.~Serre (\cite{Se66}), any finite-dimensional complex semisimple Lie algebra admits a presentation whose parameters are the entries of some matrix of integers $A$, called a \emph{Cartan matrix}. This presentation still makes sense if one allows more general integral matrices $A=(a_{ij})_{i,j\in I}$ ($I$ a finite set), called \emph{generalised Cartan matrices} (GCM). The corresponding Lie algebras $\g(A)$ (the \emph{Kac--Moody algebras}) were introduced independently in 1967 by V.~Kac (\cite{Kac67}) and R.~Moody (\cite{Mo67}). From a mere generalisation to infinite dimension of the semisimple Lie algebras (which are the Kac--Moody algebras of \emph{finite type}), Kac--Moody algebras soon became central objects of study, with a wide array of applications in a variety of mathematical domains, as well as in theoretical physics (see e.g. \cite{Kac}). Over the past years, specific Kac--Moody algebras of \emph{hyperbolic} and \emph{Lorentzian type} have also been intensively investigated in connection to \emph{$M$-theory}  (see e.g. \cite{Damour02}, \cite{Damour05}, \cite[\S 19]{FGKP18}, and also \cite{We01}, \cite{GOW02}). On the other hand, understanding the structure of Kac--Moody algebras is an essential step towards a better understanding of the corresponding \emph{Kac--Moody groups}; these groups, introduced in the late 1980's, turn out to exhibit a very rich structure, paralleling that of semisimple algebraic groups, and have become over the last decades prominent objects of study in many different areas, including geometric group theory, algebraic geometry and representation theory (see e.g. \cite{Kumar}, \cite[\S 3.1]{Cap16}, \cite{KMGbook}).

Kac--Moody algebras share many properties with their finite-dimensional sisters; in particular, they possess a \emph{root space decomposition}
$$\g(A)=\hh\oplus\bigoplus_{\alpha\in\Delta}\g_{\alpha}$$
with respect to the adjoint action of a \emph{Cartan subalgebra} $\hh$, with associated set of \emph{roots} $\Delta\subseteq\hh^*$, as well as a \emph{triangular decomposition}
$$\g(A)=\nn^-\oplus\hh\oplus\nn^+,$$
where $\nn^{\pm}:=\bigoplus_{\alpha\in\Delta^{\pm}}\g_{\alpha}$ is the subalgebra of $\g(A)$ associated to the set of \emph{positive/negative} roots $\Delta^{\pm}=\Delta\cap\pm\sum_{i\in I}\NN\alpha_i$ with respect to a set $\{\alpha_i \ | \ i\in I\}$ of \emph{simple} roots.
Moreover, if $A$ is \emph{symmetrisable} (a mild assumption made throughout this paper, see \S\ref{section:preliminaries} for precise definitions), then $\g(A)$ admits a non-degenerate invariant form $(\cdot|\cdot)$ that generalises the Killing form; the restriction to $\hh$ of this form is non-degenerate, and induces a bilinear form
$$\hh^*\times\hh^*\to\CC: \quad (\alpha,\beta)\mapsto (\alpha|\beta)$$
on $\hh^*$.

On the other hand, Kac--Moody algebras also show some striking differences: while some roots in $\Delta$ have analoguous properties to the roots of a semisimple Lie algebra (such roots are called \emph{real}), the key novelty of Kac--Moody algebras (of non-finite type) is the apparition of \emph{imaginary} roots, with a completely different behaviour. The sets $\Delta^{re}$ and $\Delta^{im}$ of real and imaginary roots can be described as 
$$\Delta^{re}=\{\alpha\in\Delta \ | \ (\alpha|\alpha)>0\}\quad\textrm{and}\quad \Delta^{im}=\{\alpha\in\Delta \ | \ (\alpha|\alpha)\leq 0\}.$$
One of the most notable differences between real and imaginary roots concerns the dimension of the corresponding root spaces: while $\dim\g_{\alpha}=1$ for all $\alpha\in\Delta^{re}$, the dimensions of the root spaces $\g_{\alpha}$ with $\alpha\in\Delta^{im}$ might be arbitrarily large, and determining these root multiplicities is still a widely open problem (see \cite{CFL14} for a recent survey of the state of the art). In fact, despite a considerable volume of works on the topic, the structure of general Kac--Moody algebras remains, to this day, largely mysterious.

One exception is the case where all imaginary roots $\alpha\in\Delta^{im}$ are \emph{isotropic}, in the sense that $(\alpha|\alpha)=0$. The corresponding Kac--Moody algebras, of so-called \emph{affine type}, have concrete realisations as (twisted) loop algebras over semisimple Lie algebras (or rather, suitable extensions thereof), and their structure is thus well-understood. In particular, $\{\dim\g_{\alpha} \ | \ \alpha\in\Delta\}$ is in that case bounded. On the other hand, when $\g(A)$ is of \emph{indefinite type}, i.e. neither of finite nor of affine type, the set $\{\dim\g_{\alpha} \ | \ \alpha\in\Delta\}$ is unbounded, and there is not a single instance where a ``concrete realisation'' of $\g(A)$, as in the affine case, is known.

In order to elucidate the structure of general Kac--Moody algebras beyond the foundational results of the theory (see \cite{Kac}), the efforts of the Kac--Moody community have essentially been focussed on obtaining root multiplicity formulas. Pioneering works by several authors (notably, \cite{BM79}, \cite{FF83} and \cite{Ka94}) led to several such formulas in closed form, at various levels of generality, and these formulas were applied in a number of papers to determine explicitely the root multiplicities of ``small'' roots for some particular Kac--Moody algebras. However, these formulas are of very little help in understanding how $\dim\g_{\alpha}$ varies when $\alpha\in\Delta^{im}$ varies and, \emph{a fortiori}, in getting global information on the Lie algebra structure of $\g(A)$. In fact, apart from a monotonicity result, obtained in \cite[Proposition~5.6]{KM95}, stating that $\dim\g_{\alpha}\leq\dim\g_{\alpha+\alpha_1+\alpha_2}$ for any root $\alpha\in\Delta^+$ associated to the GCM $A=\begin{psmallmatrix}2 & -a\\ -a &2\end{psmallmatrix}$ ($a\in\NN$), there seems to be no general result beyond \cite{Kac} that provides information on the Lie bracket of $\g(A)$, or even that offers some comparison results on the root multiplicities.

\medskip

In the present paper, we take a different approach, by addressing the following fundamental question on the Lie algebra structure of $\g(A)$:
$$\textbf{\emph{Given $x\in\g_{\alpha}$ and $y\in\g_{\beta}$, when is $[x,y]$ a nonzero element?}}$$
Our main theorem is as follows.

\begin{thmintro}\label{thmintro:main}
Let $\alpha,\beta\in\Delta$. If $(\alpha|\beta)<0$ then $[x,y]\neq 0$ for all nonzero $x\in\g_{\alpha}$ and $y\in\g_{\beta}$, unless $\alpha=\beta$ and $\CC x=\CC y$.
\end{thmintro}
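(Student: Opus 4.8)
The plan is to argue by contradiction: assuming $[x,y]=0$ with $x\in\g_\alpha$, $y\in\g_\beta$ nonzero, I will convert this identity, via the Jacobi identity and the invariant form, into a numerical identity whose signs are incompatible with $(\alpha|\beta)<0$. Recall that for $u\in\g_\gamma$ and $w\in\g_{-\gamma}$ one has $[u,w]=(u|w)t_\gamma$, where $t_\gamma:=\nu\inv(\gamma)\in\hh$ is characterised by $\lambda(t_\gamma)=(\lambda|\gamma)$ for all $\lambda\in\hh^*$. The first step is to fix $x^-\in\g_{-\alpha}$ and expand $[[x,x^-],y]$ by the Jacobi identity: the term $[x^-,[x,y]]$ vanishes by hypothesis, while $[x,x^-]=(x|x^-)t_\alpha$ acts on $y$ by the scalar $\beta(t_\alpha)=(\alpha|\beta)$. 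This produces the key identity
\[
(x|x^-)\,(\alpha|\beta)\,y=[x,[x^-,y]]\qquad\text{for every }x^-\in\g_{-\alpha}.
\]

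To exploit the sign of $(\alpha|\beta)$ I would bring in the contravariant Hermitian form. Let $\sigma$ be the compact (conjugate-linear) involution of $\g(A)$, so that $\sigma(\g_\gamma)=\g_{-\gamma}$, $\sigma(t_\gamma)=-t_\gamma$ and $(\sigma a|\sigma b)=\overline{(a|b)}$, and put $H(a,b):=-(\sigma(a)|b)$. Then $H$ is a Hermitian form, orthogonal on distinct root spaces, with $H(t_\gamma,t_\gamma)=(\gamma|\gamma)$; the one nontrivial input I rely on is the positivity theorem (see \cite[Ch.~11]{Kac}), namely that $H$ is positive definite on every root space $\g_\gamma$ with $\gamma\in\Delta$. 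Unwinding the invariance of $(\cdot|\cdot)$ gives the contravariance rule $H(a,[u,b])=H([a,\sigma u],b)$. Specialising the key identity to $x^-=\sigma(x)$, where $(x|\sigma x)=-H(x,x)$, and then pairing both sides with $y$ through $H$, this rule collapses the right-hand side and yields the scalar identity
\[
H(x,x)\,(\alpha|\beta)\,H(y,y)=H(v,v),\qquad v:=[\sigma x,y]\in\g_{\beta-\alpha}.
\]

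The conclusion can then be read off. As $x,y\neq0$ and $H$ is positive definite on $\g_\alpha$ and $\g_\beta$, the scalars $H(x,x)$ and $H(y,y)$ are positive, so the left-hand side is strictly negative. If $\alpha\neq\beta$, then $\beta-\alpha\neq0$ and $v$ lies in a root space (or is $0$), so $H(v,v)\geq0$, a contradiction; hence $[x,y]\neq0$. The subtle case is $\alpha=\beta$, where $v=H(x,y)\,t_\alpha$ lies in $\hh$ rather than in a positive-definite root space, so that $H(v,v)=|H(x,y)|^2(\alpha|\alpha)$. Since $(\alpha|\beta)=(\alpha|\alpha)<0$ is nonzero, the identity simplifies to $H(x,x)H(y,y)=|H(x,y)|^2$, which is exactly the equality case of the Cauchy--Schwarz inequality for the positive-definite form $H$ on $\g_\alpha$; it forces $x$ and $y$ to be proportional, i.e. $\CC x=\CC y$---precisely the stated exception.

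I expect the positivity of $H$ on the \emph{imaginary} root spaces, not merely the real ones, to be the crux: it is what makes $H(v,v)\geq0$ legitimate in the generic case and what powers the Cauchy--Schwarz step when $\alpha=\beta$. The remaining points to pin down are routine but worth isolating as preliminaries: the existence of the compact involution $\sigma$ with the reality property $(\sigma a|\sigma b)=\overline{(a|b)}$ for the chosen normalisation of $(\cdot|\cdot)$, and the bookkeeping of conjugate-linearity ensuring that $H(x,x)$, $H(y,y)$ and $H(t_\alpha,t_\alpha)=(\alpha|\alpha)$ are genuinely real.
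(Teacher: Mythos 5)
Your proof is correct, and it takes a genuinely different --- and much shorter --- route than the paper's. The paper proves this statement as Theorem~\ref{thm:pre-free} through a case analysis: when one of the roots is real it invokes Lemma~\ref{lemma:prop36}(1); when both are imaginary it reduces to $\alpha,\beta\in\Delta^{im+}$ with $\alpha\neq\beta$ (via Lemma~\ref{lemma:Kac_free_Lie}) and then splits according to whether $\alpha-\beta\notin\Delta$, $\alpha-\beta\in\Delta^{re}$ or $\alpha-\beta\in\Delta^{im}$ (Proposition~\ref{prop:basic1} and Lemmas~\ref{lemma:a-bre+}, \ref{lemma:a-bim+}), using embeddings of auxiliary algebras $\widetilde{\g}'(B)$, root-string arguments and a delicate induction on $\alpha+\beta$. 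Your argument is instead uniform in $\alpha,\beta$, and I checked its computational core: the identity $(x|x^-)(\alpha|\beta)\,y=[x,[x^-,y]]$ from Jacobi and $[u,w]=(u|w)t_\gamma$, the contravariance rule $H(a,[u,b])=H([a,\sigma u],b)$, and the resulting scalar identity $H(x,x)(\alpha|\beta)H(y,y)=H(v,v)$ with $v=[\sigma x,y]$, including $H(v,v)=|H(x,y)|^2(\alpha|\alpha)$ when $\alpha=\beta$ --- all signs come out as you state. The one deep input you need, positive-definiteness of $H$ on \emph{every} root space (imaginary ones included), is exactly \cite[Theorem~11.7a)]{Kac} (the Kac--Peterson unitarity theorem), and it is a legitimate, non-circular citation: the paper itself invokes that theorem in Lemma~\ref{lemma:dualbases}, but only to normalise $[\omega(x'),x']=\alpha^{\sharp}$, whereas you use its full strength. (Your care with the antilinear compact involution is also warranted: with the linear Chevalley involution $\omega$, the assertion $(\omega(x)|x)\neq 0$ for all nonzero $x\in\g_{\alpha}$, as literally written in the proof of Lemma~\ref{lemma:dualbases}, cannot hold once $\dim\g_{\alpha}\geq 2$, since a nondegenerate symmetric $\CC$-bilinear form always has isotropic vectors; that lemma should be read with the compact involution.) As for what each route buys: yours compresses Proposition~\ref{prop:basic1} and Lemmas~\ref{lemma:a-bre+}--\ref{lemma:a-bim+} into half a page and makes the exceptional case $\alpha=\beta$, $\CC x=\CC y$ fall out naturally as the equality case of Cauchy--Schwarz; the paper's longer route extracts strictly more structure along the way --- notably Proposition~\ref{prop:basic1} shows that under its hypotheses $x$ and $y$ generate a \emph{free} Lie subalgebra, a statement the positivity argument cannot produce.
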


As the condition $(\alpha|\beta)<0$ is almost always satisfied for positive imaginary roots $\alpha,\beta\in\Delta^{im+}:=\Delta^{im}\cap\Delta^+$ (see Lemma~\ref{lemma:basic_propI}), Theorem~\ref{thmintro:main} allows in particular for a precise description of the Lie bracket on the \emph{imaginary subalgebra} (see \S\ref{subsection:WGOgA})
$$\nn^{im+}:=\bigoplus_{\alpha\in\Delta^{im+}}\g_{\alpha}$$
of $\nn^+$ (note that there is a \emph{Chevalley involution} $\omega\in\Aut(\g(A))$ exchanging $\nn^+$ and $\nn^-$, whence our focus on $\nn^+$).

\begin{corintro}\label{corintro:main}
Let $\alpha,\beta\in\Delta^{im+}$. Then one of the following holds:
\begin{enumerate}
\item
$[\g_{\alpha},\g_{\beta}]=\{0\}$. This occurs if and only if either $\alpha+\beta\notin\Delta$, or $\alpha,\beta$ are proportional isotropic roots.
 \item
 $[x,y]\neq 0$ for every nonzero $x\in\g_{\alpha}$ and $y\in\g_{\beta}$ such that $\CC x\neq\CC y$. 
 \end{enumerate}
\end{corintro}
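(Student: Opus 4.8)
The plan is to derive Corollary~\ref{corintro:main} from Theorem~\ref{thmintro:main} by a case analysis on the sign of $(\alpha|\beta)$ for imaginary roots $\alpha,\beta\in\Delta^{im+}$. First I would invoke the stated Lemma~\ref{lemma:basic_propI}, which records that for positive imaginary roots the product $(\alpha|\beta)$ is almost always negative; the exceptions should be precisely the degenerate situations where $\alpha$ and $\beta$ are proportional isotropic roots (so that $(\alpha|\alpha)=(\beta|\beta)=0$ and $(\alpha|\beta)=0$). This dichotomy is what splits the argument into the two alternatives of the corollary.

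The main branch is the generic case $(\alpha|\beta)<0$. Here Theorem~\ref{thmintro:main} applies directly: for all nonzero $x\in\g_\alpha$ and $y\in\g_\beta$ we get $[x,y]\neq 0$ unless $\alpha=\beta$ and $\CC x=\CC y$. Thus whenever $\CC x\neq\CC y$ we have $[x,y]\neq 0$, which is exactly conclusion (2); note that $[x,y]\in\g_{\alpha+\beta}$ being nonzero forces $\alpha+\beta\in\Delta$, so the vanishing alternative (1) is excluded in this branch. I would phrase this so that (2) is the default outcome, and (1) arises only in the complementary degenerate cases.

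The remaining branch is $(\alpha|\beta)=0$, which by the lemma means $\alpha,\beta$ are proportional isotropic roots. Here I expect the bracket to vanish identically, $[\g_\alpha,\g_\beta]=\{0\}$, giving alternative (1); the key point is that proportional isotropic roots lie in a single affine (rank-two isotropic) subsystem, where the relevant root spaces commute. Separately, I must handle the possibility $\alpha+\beta\notin\Delta$: if the sum is not a root then $\g_{\alpha+\beta}=\{0\}$ and $[\g_\alpha,\g_\beta]\subseteq\g_{\alpha+\beta}=\{0\}$ trivially, again landing in (1). Assembling the ``if and only if'' in (1) then amounts to checking that these are the \emph{only} ways the bracket can vanish: if $\alpha+\beta\in\Delta$ and $\alpha,\beta$ are not proportional isotropic, then $(\alpha|\beta)<0$ by the lemma and (2) forces non-vanishing brackets.

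The main obstacle I anticipate is the isotropic case: showing that proportional isotropic roots genuinely yield $[\g_\alpha,\g_\beta]=\{0\}$ (rather than merely failing the hypothesis of Theorem~\ref{thmintro:main}) requires structural input beyond the theorem itself—presumably the fact that a $\ZZ$-line of isotropic roots generates an affine subalgebra inside $\g(A)$, where the graded pieces along the imaginary direction commute. I would isolate this as the one place needing an independent argument, while the rest of the corollary follows formally from Theorem~\ref{thmintro:main} together with the sign information in Lemma~\ref{lemma:basic_propI}.
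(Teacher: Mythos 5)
Your proposal is correct and matches the paper's own proof: the paper likewise disposes of the vanishing cases first ($\alpha+\beta\notin\Delta$ trivially, and proportional isotropic roots), then invokes Lemma~\ref{lemma:basic_propI}(2) to reduce all remaining cases to $(\alpha|\beta)<0$, where Theorem~\ref{thmintro:main} applies. The ``independent argument'' you flag for the isotropic case is exactly Lemma~\ref{lemma:Kac_free_Lie}(2) of the paper (the infinite Heisenberg algebra attached to an isotropic root, quoted from Kac's Corollary~9.12), so no new work is needed there.
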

Note that the case (1) in Corollary~\ref{corintro:main} is completely understood, as we also determine precisely the pairs $\alpha,\beta\in\Delta^{im+}$ such that $\alpha+\beta\notin\Delta$ (see Lemma~\ref{lemma:imaginary_not_connected}). The case (2), on the other hand, implies the following dramatic generalisation of \cite[Proposition~5.6]{KM95} to arbitrary symmetrisable GCM and arbitrary pairs of positive roots $(\alpha,\beta)$ (Proposition~5.6 in \emph{loc.cit.} covers the case $\beta=\alpha_1+\alpha_2$ for the symmetric $2\times 2$ GCM).

\begin{corintro}\label{corintro:dimension}
Let $\alpha,\beta\in\Delta^{im+}$ with $\alpha\neq\beta$ be such that $[\g_{\alpha},\g_{\beta}]\neq\{0\}$. Then 
 $$\dim [\g_{\alpha},\g_{\beta}]\geq\max\{\dim\g_{\alpha},\dim\g_{\beta}\},$$ with equality if and only if $\min\{\dim\g_{\alpha},\dim\g_{\beta}\}=1$.
\end{corintro}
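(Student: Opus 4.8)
The plan is to deduce everything from Corollary~\ref{corintro:main}. Since $[\g_{\alpha},\g_{\beta}]\neq\{0\}$, we are in case~(2) of that corollary; and since $\alpha\neq\beta$, any nonzero $x\in\g_{\alpha}$ and $y\in\g_{\beta}$ lie in distinct root spaces, so that automatically $\CC x\neq\CC y$. Hence the crucial input I will use throughout is that $[x,y]\neq 0$ for all nonzero $x\in\g_{\alpha}$ and $y\in\g_{\beta}$. Relabelling if necessary, I may assume $\dim\g_{\alpha}\leq\dim\g_{\beta}$, so that $\max\{\dim\g_{\alpha},\dim\g_{\beta}\}=\dim\g_{\beta}$ and $\min\{\dim\g_{\alpha},\dim\g_{\beta}\}=\dim\g_{\alpha}$.

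For the inequality, I would fix a nonzero $x\in\g_{\alpha}$ and consider the linear map $\ad x\co\g_{\beta}\to\g_{\alpha+\beta}$, $y\mapsto[x,y]$. By the non-vanishing above, its kernel is trivial, so it is injective; therefore $\dim[\g_{\alpha},\g_{\beta}]\geq\dim[x,\g_{\beta}]=\dim\g_{\beta}=\max\{\dim\g_{\alpha},\dim\g_{\beta}\}$, as desired. The easy half of the equality statement is immediate from the same computation: if $\min\{\dim\g_{\alpha},\dim\g_{\beta}\}=\dim\g_{\alpha}=1$, then $\g_{\alpha}=\CC x$ and $[\g_{\alpha},\g_{\beta}]=[x,\g_{\beta}]$ has dimension exactly $\dim\g_{\beta}$.

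The substantive point is the converse: assuming equality $\dim[\g_{\alpha},\g_{\beta}]=\dim\g_{\beta}$, I must show $\dim\g_{\alpha}=1$. Suppose instead $\dim\g_{\alpha}\geq 2$. Writing $V:=[\g_{\alpha},\g_{\beta}]$, the injectivity argument shows that for \emph{every} nonzero $x\in\g_{\alpha}$ the subspace $[x,\g_{\beta}]\subseteq V$ already has dimension $\dim\g_{\beta}=\dim V$, whence $[x,\g_{\beta}]=V$; thus each $\ad x\co\g_{\beta}\to V$ is a linear isomorphism. Picking linearly independent $x_1,x_2\in\g_{\alpha}$, the composite $\phi:=(\ad x_1)\inv\circ(\ad x_2)$ is then a linear automorphism of $\g_{\beta}$. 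Over $\CC$ it has an eigenvector $y_0\neq 0$, say $\phi(y_0)=\lambda y_0$ with $\lambda\neq 0$, and unravelling the definition gives $[x_2,y_0]=[x_1,\phi(y_0)]=\lambda[x_1,y_0]$, i.e. $[x_2-\lambda x_1,\,y_0]=0$. Here $x_2-\lambda x_1\in\g_{\alpha}$ is nonzero by linear independence of $x_1,x_2$, and $y_0\in\g_{\beta}$ is nonzero, contradicting the non-vanishing from Corollary~\ref{corintro:main}(2). Hence $\dim\g_{\alpha}=1$.

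I expect the eigenvalue argument in this last paragraph to be the only real obstacle: the inequality and the easy direction are bookkeeping on top of Corollary~\ref{corintro:main}, whereas the converse requires producing, out of the hypothetical coincidence $[x_1,\g_{\beta}]=[x_2,\g_{\beta}]$, a genuine nonzero pair with vanishing bracket --- which is exactly what the spectral theory of $\phi$ over the algebraically closed field $\CC$ supplies.
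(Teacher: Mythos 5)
Your proof is correct and takes essentially the same route as the paper: the inequality comes from the injectivity of $\ad x\co\g_{\beta}\to\g_{\alpha+\beta}$ given by Theorem~\ref{thm:pre-free} (of which Corollary~\ref{corintro:main}(2) is the packaged form), and your eigenvector argument for strictness is precisely the paper's Lemma~\ref{lemma:strict_inequality}, where your automorphism $\phi=(\ad x_1)\inv\circ(\ad x_2)$ appears in matrix form as $\Lambda^T$ and its eigenvector produces the same vanishing bracket $[x_2-\lambda x_1,y_0]=0$ contradicting the non-vanishing theorem. There are no gaps.
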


\medskip

As an application of Theorem~\ref{thmintro:main}, we describe, in the second part of this paper, the graded subalgebras $\LLL$ of $\g(A)$ all whose elements are $\ad$-locally finite on $\LLL$; in particular, we obtain structure results for the solvable and nilpotent graded subalgebras of $\g(A)$. We recall that an element $x\in\LLL$ is \emph{$\ad$-locally finite} on $\LLL$ if for every $y\in\LLL$ there is some finite-dimensional subspace $V\subseteq \LLL$ containing $y$ such that $[x,V]\subseteq V$. In other words, $x\in\LLL$ is $\ad$-locally finite on $\LLL$ if and only if the exponential
$$\exp\ad x:=\sum_{n\in\NN}\frac{(\ad x)^n}{n!}$$
yields a well-defined operator in $\Aut(\LLL)$.
The condition that $\LLL$ only consists of such elements thus precisely means that $\LLL$ can be integrated to a group
$$G(\LLL)=\langle\exp\ad x \ | \ x\in\LLL\rangle\subseteq\Aut(\LLL).$$

Note that $\ad$-local finiteness is another key difference between real and imaginary root spaces: while $x$ is $\ad$-locally finite on $\g(A)$ for every $x\in\g_{\alpha}$ with $\alpha\in\Delta^{re}$ (this condition in fact characterises Kac--Moody algebras within the class of \emph{contragredient} Lie algebras, see \cite[\S 4.1]{MoPi}), the nonzero elements of imaginary root spaces are \emph{not} $\ad$-locally finite on $\g(A)$. We first establish a very precise form of this statement.

\begin{thmintro}
Let $\alpha\in\Delta^{im+}$ and $\beta\in\Delta^+$. Let $x\in\g_{\alpha}$ and $y\in\g_{\beta}$ be such that $[x,y]\neq 0$. Then $(\ad x)^ny\neq 0$ for all $n\in\NN$.
\end{thmintro}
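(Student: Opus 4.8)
The plan is to induct on $n$, setting $z_n:=(\ad x)^ny\in\g_{\gamma_n}$ with $\gamma_n:=n\alpha+\beta$, so that $z_0=y\neq 0$ and $z_1=[x,y]\neq 0$ by hypothesis, and $z_{n+1}=[x,z_n]$. For the inductive step I would feed the pair $(\alpha,\gamma_n)$ into Theorem~\ref{thmintro:main}: if $z_n\neq 0$ and $(\alpha\,|\,\gamma_n)<0$, then $z_{n+1}=[x,z_n]\neq 0$, the exceptional case being vacuous since $\alpha=\gamma_n$ would force $\beta=(1-n)\alpha\notin\Delta^+$ for $n\geq 1$. As $\alpha$ is imaginary, $(\alpha|\alpha)\leq 0$, so
$$(\alpha\,|\,\gamma_n)=(\alpha\,|\,\alpha+\beta)+(n-1)(\alpha|\alpha)$$
is non-increasing in $n$; hence once it is negative it stays negative, and everything reduces to the sign of $(\alpha\,|\,\alpha+\beta)$. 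When this is $<0$, Theorem~\ref{thmintro:main} drives the induction at every step and we are done; the steps requiring separate treatment are exactly those with $(\alpha\,|\,\gamma_n)\geq 0$, which form a finite initial segment if $(\alpha|\alpha)<0$ but comprise \emph{all} steps when $\alpha$ is isotropic with $(\alpha|\beta)\geq 0$.

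So the core is the inequality $(\alpha\,|\,\alpha+\beta)<0$, which I would establish case by case. If $\beta\in\Delta^{im+}$, then both $(\alpha|\alpha)$ and $(\alpha|\beta)$ are $\leq 0$ by Lemma~\ref{lemma:basic_propI}, so $(\alpha\,|\,\alpha+\beta)\leq 0$, with equality forcing $\alpha$ isotropic and $(\alpha|\beta)=0$, i.e.\ $\alpha,\beta$ proportional isotropic; but then $[\g_\alpha,\g_\beta]=\{0\}$ by Corollary~\ref{corintro:main}, contradicting $[x,y]\neq 0$, so the inequality is strict. If $\beta$ is real but $\gamma_1=\alpha+\beta$ is imaginary, then $\alpha,\gamma_1$ are positive imaginary and non-proportional (as $\gamma_1=t\alpha$ would give $\beta=(t-1)\alpha$, impossible for real $\beta$), whence $(\alpha\,|\,\gamma_1)<0$ again by Lemma~\ref{lemma:basic_propI}. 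The remaining configuration is $\beta$ and $\gamma_1$ both real: here I would use $(\alpha\,|\,\gamma_1)=(\gamma_1|\gamma_1)-(\beta\,|\,\gamma_1)$ together with $(\alpha|\alpha)=(\gamma_1|\gamma_1)-2(\beta|\gamma_1)+(\beta|\beta)\leq 0$ to get $(\alpha\,|\,\gamma_1)\leq\tfrac12\big((\gamma_1|\gamma_1)-(\beta|\beta)\big)$, which is $\leq 0$ as soon as $\beta,\gamma_1$ have equal length, and is then strict unless $\alpha$ is isotropic.

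This isolates the genuine obstacle: the boundary situation $(\alpha\,|\,\alpha+\beta)\geq 0$, which really does occur — with $\alpha$ isotropic (the model case being $\alpha=k\delta$ and $\beta$ a real root of an affine subsystem, where $(\alpha|\gamma_n)\equiv 0$ and Theorem~\ref{thmintro:main} says nothing at any step), and possibly with $\beta,\gamma_1$ real of unequal lengths. To treat it I would bring in the $\sll_2$-triple $(e_\beta,\beta^\vee,f_\beta)$ attached to the real root $\beta$ and the three operators $\ad x$, $\ad f$ (for a suitable $f\in\g_{-\alpha}$ with $(x|f)\neq 0$) and $\ad t_\alpha$, which close into a copy of $\sll_2$, or into a Heisenberg algebra when $\alpha$ is isotropic, in which latter case $\ad t_\alpha$ acts by the constant scalar $-(\alpha|\beta)$ along the $\alpha$-string through $-\beta$. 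Pairing $z_n$ against $(\ad f)^n\omega(y)\in\g_{-\gamma_n}$ and unwinding this operator algebra through the invariance of $(\cdot|\cdot)$ should turn the non-vanishing of $z_n$ into a recursion whose coefficients are controlled by $(x|f)$ and $(\alpha|\beta)$; equivalently, in the affine model one reads $(\ad x)^ny\neq 0$ directly off the loop-algebra realisation, the point being that $[x,y]\neq 0$ forces the relevant structure constant to be nonzero and that no nilpotency in the loop variable can occur. I expect this degenerate, form-theoretic step — rather than the clean inductive reduction — to be the main difficulty, precisely because it is where the imaginary-root combinatorics underlying Theorem~\ref{thmintro:main} degenerates.
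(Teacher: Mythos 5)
Your inductive frame is exactly the paper's starting point: once $(\alpha|\gamma_n)<0$, Theorem~\ref{thm:pre-free} (Theorem~A) drives the induction, monotonicity of $(\alpha|\gamma_n)$ reduces everything to the sign of $(\alpha|\alpha+\beta)$, and your treatment of $\beta\in\Delta^{im+}$ (non-proportionality via $[x,y]\neq 0$ and Lemma~\ref{lemma:basic_propI}(2)) is the paper's argument for that case. One local slip: when $\beta$ is real and $\gamma_1:=\alpha+\beta$ is imaginary, you invoke Lemma~\ref{lemma:basic_propI} on the pair $(\alpha,\gamma_1)$, but part (2) of that lemma needs $\alpha+\gamma_1=2\alpha+\beta\in\Delta$, which you do not know at that stage (it is essentially what you are proving); this is harmless, since your own identity $(\alpha|\gamma_1)=\tfrac12\bigl((\gamma_1|\gamma_1)-(\beta|\beta)+(\alpha|\alpha)\bigr)$ already gives strict negativity whenever $(\gamma_1|\gamma_1)\leq 0<(\beta|\beta)$.

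The genuine gap is that the two boundary cases you isolate are precisely where the content of the theorem lies, and what you offer for them is a sketch that does not close. In the anisotropic boundary case ($\alpha\in\Delta^{im+}_{an}$, $\beta$ and $\gamma_1$ real, $(\alpha|\gamma_1)\geq 0$), the triple $\ad x$, $\ad f$, $\ad \alpha^{\sharp}$ does span a copy of $\sll_2(\CC)$, but the module $\bigoplus_m\g_{\beta+m\alpha}$ over it is \emph{not} integrable ($\ad x$ is not locally nilpotent), so no $\sll_2$-module theory constrains where $\ad x$ can vanish, and weight bookkeeping cannot rule out $(\ad x)^2y=0$; the proposed pairing recursion never materialises into an argument. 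The paper's actual mechanism here is different and is missing from your proposal: it first proves $2\alpha+\beta\in\Delta^{re}$ (via the real root string Lemma~\ref{lemma:RS} plus a norm computation), deduces $(\alpha|\alpha+\beta)=0$ from $\alpha\pm(\alpha+\beta)\in\Delta^{re}$, and then settles the single remaining step $n=1\to 2$ by a kernel-inclusion/codimension count on the two maps $\g_{\alpha}\to\g_{\alpha+\beta},\,z\mapsto[e_{\beta},z]$ and $\g_{\alpha}\to\g_{2\alpha+\beta},\,z\mapsto[e_{\alpha+\beta},z]$, which have equal kernels because $\dim\g_{\alpha+\beta}=1$ (Lemma~\ref{lemma:symm1}); after that, $(\alpha|\gamma_n)=(n-1)(\alpha|\alpha)<0$ and Theorem~A resumes. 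In the isotropic case, your loop-algebra remark is indeed the paper's route (Lemma~\ref{lemma:isotropic+real}(2)), but it is only available after one proves that $(\alpha|\beta)\geq 0$ forces $\supp(w\beta)\subseteq\supp(w\alpha)$ for $w\alpha\in K_0$ — i.e., that $\beta$ genuinely lies in the affine subsystem carrying $\alpha$ (Lemma~\ref{lemma:isotropic+real}(1)); you assert the "model case" without establishing that it is the only case. Your alternative form-theoretic recursion degenerates exactly at the critical point: when $(\alpha|\beta)=0$, the commutator $[\ad x,\ad f]=-\ad\alpha^{\sharp}$ acts as zero on the whole string through $\beta$, so $\ad x$ and $\ad f$ commute there, and the pairing $(z_n\,|\,\omega(z_n))$ reduces to expressions involving $(\ad x)^n\omega(y)$ — an unknown of the same kind, so the argument is circular. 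Both boundary cases therefore still need real proofs.
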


We next state the announced structure result for graded subalgebras of $\LLL$ with only $\ad$-locally finite elements. An element $x\in\g(A)$ is \emph{homogeneous} if $x\in\g_{\alpha}$ for some $\alpha\in\Delta\cup\{0\}$. A set $\Psi\subseteq\Delta$ of roots is called \emph{closed} if $\alpha+\beta\in\Psi$ whenever $\alpha,\beta\in\Psi$ and $\alpha+\beta\in\Delta$. One then writes $\g_{\Psi}:=\bigoplus_{\alpha\in\Psi}\g_{\alpha}\subseteq\g(A)$. In particular, $\nn^{im\pm}=\g_{\Delta^{im\pm}}$, where $\Delta^{im\pm}=\Delta^{im}\cap\Delta^{\pm}$.

\begin{thmintro}\label{thmintro:structure_thm}
Let $\LLL$ be a graded subalgebra of $\g(A)$ such that each homogeneous element of $\LLL$ is $\ad$-locally finite on $\LLL$. Then there exists a closed set of real roots $\Psi\subseteq\Delta^{re}$, and abelian subalgebras $\LLL_0\subseteq\hh$, $\LLL^{im+}\subseteq\nn^{im+}$ and $\LLL^{im-}\subseteq\nn^{im-}$ such that 
\begin{enumerate}
\item
$\LLL=\LLL_0\oplus\g_{\Psi}\oplus\LLL^{im+}\oplus\LLL^{im-}$;
\item
$[\g_{\Psi},\LLL^{im+}]=\{0\}=[\g_{\Psi},\LLL^{im-}]$;
\item
$[\LLL^{im+},\LLL^{im-}]\subseteq \LLL_0\oplus\g_{\Psi}$.
\end{enumerate}
\end{thmintro}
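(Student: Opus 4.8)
The plan is to read the whole statement off the root-space grading and then to feed the nontrivial brackets into the preceding theorem. Since $\LLL$ is graded, $\LLL=(\LLL\cap\hh)\oplus\bigoplus_{\gamma\in\Phi}(\LLL\cap\g_\gamma)$, where $\Phi:=\{\gamma\in\Delta\mid\LLL\cap\g_\gamma\neq\{0\}\}$ is the support. I set $\LLL_0:=\LLL\cap\hh$, $\Psi:=\Phi\cap\Delta^{re}$ and $\LLL^{im\pm}:=\LLL\cap\nn^{im\pm}$. Because $\dim\g_\alpha=1$ for real $\alpha$, every real root space in the support lies entirely in $\LLL$, so $\g_\Psi\subseteq\LLL$ and the decomposition (1) is nothing but the grading; note moreover that, once (1) is known to hold, the four summands are \emph{forced} to be exactly these weight-components. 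It remains to prove that $\Psi$ is closed, that $\LLL^{im+}$ and $\LLL^{im-}$ are abelian, and that (2)--(3) hold ($\LLL_0$ being abelian since $\hh$ is).

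The workhorse is the following consequence of the preceding theorem (that $(\ad x)^ny\neq0$ for all $n$ whenever $x\in\g_\alpha$ with $\alpha\in\Delta^{im+}$, $y\in\g_\beta$ with $\beta\in\Delta^+$, and $[x,y]\neq0$): \emph{every nonzero positive-imaginary element of $\LLL$ centralises $\LLL\cap\nn^+$.} Indeed, if $y\in\LLL^{im+}$ and $x\in\LLL\cap\nn^+$ had $[y,x]\neq0$, then the family $\{(\ad y)^nx\}_{n\geq0}$ would consist of nonzero vectors in pairwise distinct root spaces, hence be linearly independent, while lying entirely in $\LLL$; this contradicts the $\ad$-local finiteness of $y$ on $\LLL$. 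Dually (for instance through the Chevalley involution $\omega$), every nonzero negative-imaginary element of $\LLL$ centralises $\LLL\cap\nn^-$. These two facts at once give that $\LLL^{im+}$ and $\LLL^{im-}$ are abelian (each sits inside the corresponding nilpotent part), and they settle the matching-sign half of (2): $[\g_\alpha,\LLL^{im+}]=\{0\}$ for every positive real $\alpha\in\Psi$, and symmetrically $[\g_\alpha,\LLL^{im-}]=\{0\}$ for every negative real $\alpha\in\Psi$.

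The remaining and genuinely harder cases involve weights of opposite sign: $[\g_\alpha,\LLL^{im+}]$ for a \emph{negative} real root $\alpha\in\Psi$ (and the mirror case $[\g_\alpha,\LLL^{im-}]$ for $\alpha$ positive real), together with the bracket $[\LLL^{im+},\LLL^{im-}]$ of (3). Here the preceding theorem does not apply directly, and the naive $\ad$-strings terminate after one or two steps, so that $\ad$-local finiteness yields no immediate contradiction. The plan is to bootstrap each such configuration into a matching-sign one, to which the centraliser lemma of the previous paragraph does apply. Concretely, a nonzero opposite-sign bracket creates a new occupied root space $\g_\mu\cap\LLL\neq\{0\}$ (with $\mu$ the sum of the two weights); the type and sign of $\mu$ are governed by the relevant real-root string through the imaginary weight, and in each resulting case one seeks to exhibit a \emph{matching-sign} pair of occupied root spaces with nonzero bracket, whence an infinite string and the desired contradiction. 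To force these brackets to be nonzero I would appeal to the invariant form: by Theorem~\ref{thmintro:main} a strictly negative value of $(\cdot|\cdot)$ already guarantees non-vanishing, and such negativity holds for the positive imaginary roots at play by Lemma~\ref{lemma:basic_propI}, the residual isotropic and real-root boundary configurations being eliminated with the help of Lemma~\ref{lemma:imaginary_not_connected}. I expect precisely this reduction---converting an opposite-sign bracket into a matching-sign one---to be the main obstacle of the proof, as it hinges on the finer combinatorics of real-root strings through imaginary roots and on the equality cases of the bilinear form.

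Finally, closedness of $\Psi$ follows in the same spirit. For real $\alpha,\beta\in\Psi$ with $\alpha+\beta\in\Delta$ the $\alpha$-string through $\beta$ is unbroken, so $[\g_\alpha,\g_\beta]=\g_{\alpha+\beta}\neq\{0\}$ and hence $\alpha+\beta\in\Phi$; it therefore suffices to rule out $\alpha+\beta\in\Delta^{im}$, i.e. to exclude that $\LLL$ contains two real root spaces whose sum is imaginary. Such a configuration would place a nonzero imaginary vector in $\LLL$ on which a real root space of $\LLL$ acts non-trivially, and the same string/form analysis as in the crux shows this violates (2). Granting the crux, $\Psi\subseteq\Delta^{re}$ is thus closed, and assembling the four steps yields the decomposition (1), the abelian-ness of $\LLL_0$, $\LLL^{im+}$ and $\LLL^{im-}$, and properties (2) and (3), completing the proof.
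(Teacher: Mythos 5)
Your setup (the decomposition $\LLL=\LLL_0\oplus\g_\Psi\oplus\LLL^{im+}\oplus\LLL^{im-}$ read off the grading and the one-dimensionality of real root spaces) and your ``matching-sign'' argument are exactly the paper's: the paper also derives the abelianness of $\LLL^{im\pm}$ and the vanishing of $[\g_\Psi,\LLL^{im\pm}]$ from Theorem~\ref{thm:adx^ny} plus the observation that infinitely many nonzero iterates $(\ad y)^nx$ in distinct root spaces contradict $\ad$-local finiteness. But everything you label ``the crux'' is left as a plan (``one seeks to exhibit\dots'', ``I expect precisely this reduction to be the main obstacle''), and it is precisely there that the actual content of the theorem lies, so the proposal has genuine gaps. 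Two remarks on how the paper closes them. First, the opposite-sign case $[\g_\alpha,\LLL^{im+}]$ with $\alpha\in\Psi$ a \emph{negative} real root needs no bootstrap at all: since $\Delta^{im+}$ is $\WW$-stable, conjugating $\LLL$ by a suitable $w^*\in\WW^*$ moves $\alpha$ into $\Delta^+$ while keeping the imaginary degrees positive, and the matching-sign argument applies to $w^*\LLL$. The paper builds this into Theorem~\ref{thm:adx^ny} itself by allowing $\alpha\in\Delta^+\cup\Delta^{re}$ (note that the version you quote from the introduction, with $\beta\in\Delta^+$ only, hides this). Your sketch for part (3) does list the right ingredients (Lemma~\ref{lemma:imaginary_not_connected}, Lemma~\ref{lemma:basic_propI}(2), Theorem~\ref{thm:pre-free}, then Theorem~\ref{thm:adx^ny}), and the paper's proof is exactly that chain, but you did not execute it.

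The serious gap is the closedness of $\Psi$, which you dispose of by ``the same string/form analysis as in the crux''. That analysis cannot work there. If $\alpha,\gamma\in\Psi\subseteq\Delta^{re}$ and $\beta:=\alpha+\gamma\in\Delta^{im+}$, then $(\alpha|\gamma)<0$ does give $0\neq[e_\alpha,e_\gamma]\in\LLL\cap\g_\beta$ by Lemma~\ref{lemma:prop36}(1), but to launch an infinite string you must show $[e_\alpha,[e_\alpha,e_\gamma]]\neq 0$, and the value $(\beta|\alpha)$ can be \emph{zero}: this is exactly the affine configuration (e.g.\ $\alpha=\alpha_1$, $\gamma=\alpha_2$, $\beta=\delta$ for $A=\begin{psmallmatrix}2&-2\\-2&2\end{psmallmatrix}$). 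In that boundary case Theorem~\ref{thm:pre-free} is silent, Lemma~\ref{lemma:basic_propI} does not apply ($\alpha$ is real, not imaginary), and Lemma~\ref{lemma:imaginary_not_connected} is irrelevant (it concerns pairs of imaginary roots whose sum is not a root). The paper needs two dedicated results here: Lemma~\ref{lemma:symm1} (an $\sll_2$/codimension argument pinning down $(\beta|\alpha)=0=(\beta|\gamma)$ in the residual case) and Lemma~\ref{lemma:solvable_affine}, which exhibits explicit embeddings of rank-two algebras (GCMs $\begin{psmallmatrix}2&-2\\-2&2\end{psmallmatrix}$ and $\begin{psmallmatrix}2&-4\\-1&2\end{psmallmatrix}$, via twisted loop algebra computations) to prove $[e_\alpha,[e_\alpha,e_\gamma]]\neq 0$ even though the invariant form vanishes. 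Nothing in your toolkit produces this non-vanishing, so the step ``such a configuration violates (2)'' would fail as written.
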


Note that the subspaces $\g_{\Psi}$ with $\Psi\subseteq\Delta^{re}$ a closed set of real roots were completely described in \cite{CM18} (see Proposition~\ref{prop:closed_set_real_roots} below). On the other hand, we also provide a complete description of the graded abelian subalgebras $\LLL^{im\pm}$ of $\nn^{im\pm}$ (see Proposition~\ref{prop:abelian_subalgebra_nim+}).

As every element $x$ of a nilpotent subalgebra $\LLL$ of $\g(A)$ is \emph{$\ad$-locally nilpotent} on $\LLL$ (i.e. for every $y\in \LLL$ there exists some $n\in\NN$ such that $(\ad x)^ny=0$), Theorem~\ref{thmintro:structure_thm} applies in particular to nilpotent graded subalgebras of $\g(A)$, and yields the following analogue in the Kac--Moody setting of a classical result from the theory of finite-dimensional nilpotent Lie algebras.

\begin{corintro}
Let $\LLL$ be a graded subalgebra of $\g(A)$. Then $\LLL$ is nilpotent if and only if every homogeneous $x\in\LLL$ is $\ad$-locally nilpotent on $\LLL$. 
\end{corintro}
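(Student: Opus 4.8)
The forward implication needs no hypothesis on the grading. If $\LLL$ is nilpotent, say $\gamma_{c+1}(\LLL)=\{0\}$ for the lower central series $\gamma_1(\LLL)=\LLL$, $\gamma_{k+1}(\LLL)=[\LLL,\gamma_k(\LLL)]$, then $(\ad x)^c\LLL\subseteq\gamma_{c+1}(\LLL)=\{0\}$ for every $x\in\LLL$, so every $x$ (homogeneous or not) is even $\ad$-nilpotent, a fortiori $\ad$-locally nilpotent. For the converse I would argue as follows. Since $\ad$-local nilpotency implies $\ad$-local finiteness, Theorem~\ref{thmintro:structure_thm} applies and supplies a decomposition $\LLL=\LLL_0\oplus\g_{\Psi}\oplus\LLL^{im+}\oplus\LLL^{im-}$ with $\Psi\subseteq\Delta^{re}$ closed, $\LLL^{im\pm}\subseteq\nn^{im\pm}$ abelian, and the bracket relations (1)--(3), where $\LLL_0=\LLL\cap\hh$ is exactly the degree-$0$ part of $\LLL$. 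The plan is to use local nilpotency in degree $0$ to neutralise the ``Cartan part'', and then to reduce the nilpotency of $\LLL$ to that of $\g_\Psi$.

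For the first step I would fix $h\in\LLL_0$: as $h$ is homogeneous and $[h,v]=\mu(h)v$ for $v\in\g_\mu\cap\LLL$, local nilpotency of $\ad h$ forces $\mu(h)^n v=0$ for some $n$, hence $\mu(h)=0$ for every root $\mu$ occurring in the grading of $\LLL$. Thus $[\LLL_0,\LLL]=\{0\}$, i.e. $\LLL_0$ is central. Applied to $h=[e_\alpha,e_{-\alpha}]\in\LLL_0$ in the case where both $\alpha,-\alpha\in\Psi$ (so that $\alpha(h)\neq 0$), this also rules out such pairs, giving $\Psi\cap(-\Psi)=\emptyset$. By Proposition~\ref{prop:closed_set_real_roots}, this last condition forces $\Psi$ to be nilpotent and hence $\g_\Psi$ to be a nilpotent subalgebra.

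It then remains to run the lower central series of $\LLL$. Setting $\mathfrak{m}:=\g_\Psi\oplus\LLL^{im+}\oplus\LLL^{im-}$, centrality of $\LLL_0$ gives $[\LLL,X]=[\mathfrak{m},X]$ for every subspace $X\subseteq\LLL$. Using the relations (1)--(3), the abelianness of $\LLL^{im\pm}$ and $[\g_\Psi,\LLL^{im\pm}]=\{0\}$, one finds $\gamma_2(\LLL)\subseteq\LLL_0\oplus\g_\Psi$, then $\gamma_3(\LLL)\subseteq[\mathfrak{m},\g_\Psi]=[\g_\Psi,\g_\Psi]=\gamma_2(\g_\Psi)$, and inductively $\gamma_{k+1}(\LLL)\subseteq\gamma_k(\g_\Psi)$ for all $k\geq 2$; since $\g_\Psi$ is nilpotent, $\gamma_k(\g_\Psi)=\{0\}$ for $k$ large, whence $\LLL$ is nilpotent. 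I expect the main obstacle to be the clean extraction of the two consequences of degree-$0$ local nilpotency, namely the centrality of $\LLL_0$ and the emptiness of $\Psi\cap(-\Psi)$: it is precisely here that the hypothesis of local \emph{nilpotency} (rather than the weaker local finiteness already feeding Theorem~\ref{thmintro:structure_thm}) is genuinely used, and here that one must be sure the structure of $\g_\Psi$ recorded in Proposition~\ref{prop:closed_set_real_roots} does yield nilpotency once the symmetric part of $\Psi$ has been removed. Everything past that point is routine bookkeeping with the relations (1)--(3).
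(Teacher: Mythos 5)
Your proof is correct and follows essentially the same route as the paper: the paper's Corollary~\ref{corollary:nilp} is deduced from Theorem~\ref{thm:subgA}(7), whose proof likewise derives $[\LLL_0,\LLL]=\{0\}$ from local nilpotency in degree $0$, rules out pairs of opposite roots in $\Psi$ via $\alpha^{\vee}=[e_{-\alpha},e_{\alpha}]$, invokes Proposition~\ref{prop:closed_set_real_roots} for the nilpotency of $\g_{\Psi}$, and finishes with the same lower-central-series bookkeeping. Your only (cosmetic) deviation is that you absorb the imaginary part directly using centrality of $\LLL_0$ instead of citing the nilpotency of $\LLL^{im}$ as in Theorem~\ref{thm:subgA}(5).
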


Together with \cite{CM18}, Theorem~\ref{thmintro:structure_thm} further implies the existence of a uniform bound on the nilpotency class of nilpotent graded subalgebras of $\g(A)$ (see Remark~\ref{remark:nilpotency_class} for more details on the bound).

\begin{corintro}
There exists some $N\in\NN$ depending only on $A$, such that every nilpotent graded subalgebra of $\g(A)$ has nilpotency class at most $N$.
\end{corintro}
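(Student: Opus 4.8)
The plan is to combine Theorem~\ref{thmintro:structure_thm} with the description of the subalgebras $\g_{\Psi}$ from \cite{CM18} to control the lower central series. Let $\LLL$ be a nilpotent graded subalgebra of $\g(A)$. Then every homogeneous $x\in\LLL$ is $\ad$-locally nilpotent on $\LLL$, hence $\ad$-locally finite, so Theorem~\ref{thmintro:structure_thm} provides a decomposition $\LLL=\LLL_0\oplus\g_{\Psi}\oplus\LLL^{im+}\oplus\LLL^{im-}$ with $\Psi\subseteq\Delta^{re}$ a closed set of real roots, satisfying properties (1)--(3), and with $\LLL_0,\LLL^{im+},\LLL^{im-}$ abelian. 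I would first record that $\LLL_0\subseteq\hh$ is in fact \emph{central} in $\LLL$: for $x\in\LLL_0$ the operator $\ad x$ preserves the grading of $\LLL$ and acts on each $\LLL\cap\g_{\alpha}$ by the scalar $\alpha(x)$, so $(\ad x)|_{\LLL}$ is semisimple; since $\LLL$ is nilpotent it is also nilpotent, whence $(\ad x)|_{\LLL}=0$.

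Next I would trace through the lower central series, writing $C^1(\LLL)=\LLL$ and $C^{k+1}(\LLL)=[\LLL,C^k(\LLL)]$. Using properties (2)--(3), the abelianness of $\LLL^{im\pm}$, and the centrality of $\LLL_0$, one computes $C^2(\LLL)=[\g_{\Psi},\g_{\Psi}]+[\LLL^{im+},\LLL^{im-}]\subseteq\LLL_0\oplus\g_{\Psi}$. Because $\LLL_0$ is central and $[\LLL^{im\pm},\g_{\Psi}]=\{0\}$, every subsequent bracket only sees the $\g_{\Psi}$-part: letting $U$ denote the projection of $[\LLL^{im+},\LLL^{im-}]$ onto $\g_{\Psi}$, a straightforward induction gives, for all $k\geq 3$,
\[
C^k(\LLL)=C^k(\g_{\Psi})+(\ad\g_{\Psi})^{k-2}U\subseteq\g_{\Psi},
\]
where $C^k(\g_{\Psi})$ is the lower central series of $\g_{\Psi}$ and $(\ad\g_{\Psi})^jU$ is the span of $j$-fold brackets of $U$ with $\g_{\Psi}$. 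Since $U\subseteq\g_{\Psi}$ we have $(\ad\g_{\Psi})^jU\subseteq C^{j+1}(\g_{\Psi})$, so both summands vanish as soon as $k\geq c+2$, where $c$ is the nilpotency class of $\g_{\Psi}$. Hence the nilpotency class of $\LLL$ is at most $c+1$.

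It then remains to bound $c$ uniformly. Here I would invoke \cite{CM18} (Proposition~\ref{prop:closed_set_real_roots}): the subalgebra $\g_{\Psi}$ attached to a closed set of real roots $\Psi$, when it is nilpotent, has nilpotency class bounded by a constant $N_0$ depending only on $A$. Note that $\g_{\Psi}$ is indeed nilpotent here, being a subalgebra of the nilpotent $\LLL$, so this bound applies. Setting $N:=N_0+1$ then yields the desired uniform bound, independent of the particular nilpotent graded subalgebra $\LLL$.

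The main obstacle I anticipate is this last step: extracting from the description of \cite{CM18} a bound, depending only on $A$, on the nilpotency class of the nilpotent subalgebras $\g_{\Psi}$ as $\Psi$ ranges over all closed sets of real roots. The delicate point is that such $\Psi$ need not be finite, so the bound cannot come from a naive height or cardinality estimate; it must instead rely on the structural classification of closed sets of real roots, which confines the nilpotent ones to a shape whose complexity is governed by $A$ alone. By contrast, the reduction through the lower central series is routine bookkeeping once the centrality of $\LLL_0$ has been observed.
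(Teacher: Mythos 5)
Your proposal is correct and takes essentially the same approach as the paper: decompose $\LLL$ via the structure theorem, observe that $\LLL_0$ is central (the paper's condition $[\LLL_0,\LLL]=\{0\}$ in Theorem~\ref{thm:subgA}(7)), reduce the nilpotency class of $\LLL$ to that of $\g_{\Psi}$ by tracking the lower central series, and invoke the uniform bound of Proposition~\ref{prop:closed_set_real_roots}. The one step you leave implicit---that $\Psi$ contains no pair of opposite roots, so that $\g_{\Psi}$ really is a (nilpotent) subalgebra coinciding with the $\g_n=\g_{\Psi_n}$ to which the bound of \cite{CM18} applies---follows in one line from the centrality you established, since $\pm\alpha\in\Psi$ would give $\alpha^{\vee}=[e_{-\alpha},e_{\alpha}]\in\LLL_0$ while $[\alpha^{\vee},e_{\alpha}]=2e_{\alpha}\neq 0$.
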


Finally, we obtain an analogue in the Kac--Moody setting of another classical result, this time from the theory of finite-dimensional solvable Lie algebras. Let $\Gamma(A)$ denote the \emph{Dynkin diagram} of $A$.

\begin{thmintro}\label{thmintro:solvable}
Assume that $\Gamma(A)$ does not contain any subdiagram of affine type. Let $\LLL$ be a graded subalgebra of $\g(A)$.
Then the following assertions are equivalent:
\begin{enumerate}
\item
$\LLL$ is solvable.
\item
$\LLL^1:=[\LLL,\LLL]$ is nilpotent.
\item
$[\hh\cap\LLL^1,\LLL]=\{0\}$ and each homogeneous element of $\LLL$ is $\ad$-locally finite on $\LLL$.
\end{enumerate}
\end{thmintro}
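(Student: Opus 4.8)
The plan is to prove the cycle of implications (1)$\Rightarrow$(3)$\Rightarrow$(2)$\Rightarrow$(1). The implication (2)$\Rightarrow$(1) is the only one requiring no hypothesis on $A$: if $\LLL^1$ is nilpotent then it is solvable, and since $\LLL/\LLL^1$ is abelian, $\LLL$ is solvable. The assumption that $\Gamma(A)$ has no subdiagram of affine type will enter through one clean consequence, namely that $\g(A)$ then has \emph{no isotropic imaginary roots}: it is standard (see \cite{Kac}) that any $\alpha\in\Delta^{im}$ with $(\alpha|\alpha)=0$ admits a Weyl-conjugate supported on a subdiagram of affine type, so under our hypothesis $(\alpha|\alpha)<0$ for every $\alpha\in\Delta^{im}$. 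This strict negativity is what makes Theorem~\ref{thmintro:main} applicable deep inside the imaginary cone, and it is precisely what fails in affine type, where for instance the solvable subalgebra generated by a vector of an isotropic imaginary root space together with a real root vector is not $\ad$-locally finite.

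The heart of the matter is (1)$\Rightarrow$(3), and within it the claim that each homogeneous $x\in\LLL$ is $\ad$-locally finite on $\LLL$. For $x\in\hh\cap\LLL$ this is automatic, as $\ad x$ acts diagonally on the graded pieces; for $x\in\g_\alpha\cap\LLL$ with $\alpha\in\Delta^{re}$ it holds since such $x$ is already $\ad$-locally finite on all of $\g(A)$. The remaining, main case is $x\in\g_\alpha\cap\LLL$ with $\alpha\in\Delta^{im}$, and applying the Chevalley involution $\omega$ we may assume $\alpha\in\Delta^{im+}$. Suppose $x$ were \emph{not} $\ad$-locally finite on $\LLL$. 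Since $\ad x$ shifts grading by $\alpha$ and distinct graded pieces are independent, this yields a homogeneous $y\in\g_\beta\cap\LLL$ for which the chain $z_n:=(\ad x)^ny\in\g_{\beta+n\alpha}\cap\LLL$ is nonzero for all $n$ (equivalently, one may invoke the preceding theorem on the non-vanishing of $(\ad x)^ny$). Writing $\gamma_n:=\beta+n\alpha$, the crucial point is that $(\gamma_m|\gamma_n)=(\beta|\beta)+(m+n)(\alpha|\beta)+mn(\alpha|\alpha)\to-\infty$ because $(\alpha|\alpha)<0$, so for all large distinct $m,n$ one has $\gamma_m\neq\gamma_n$ and $(\gamma_m|\gamma_n)<0$, whence $[z_m,z_n]\neq 0$ by Theorem~\ref{thmintro:main}. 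Feeding this into an induction along the derived series of the subalgebra $\mathfrak{m}\subseteq\LLL$ generated by $\{z_n\}_{n\gg 0}$—at each stage the surviving homogeneous elements lie in ever deeper imaginary grades with pairwise strictly negative inner products, so every bracket is again nonzero by Theorem~\ref{thmintro:main}—gives $\mathfrak{m}^{(k)}\neq\{0\}$ for all $k$. Then $\mathfrak{m}$, and \emph{a fortiori} $\LLL$, is not solvable, a contradiction.

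Granting $\ad$-local finiteness, Theorem~\ref{thmintro:structure_thm} supplies the decomposition $\LLL=\LLL_0\oplus\g_\Psi\oplus\LLL^{im+}\oplus\LLL^{im-}$ with $\Psi\subseteq\Delta^{re}$ closed and $\LLL_0\subseteq\hh$, $\LLL^{im\pm}\subseteq\nn^{im\pm}$ abelian. I would then read off $\hh\cap\LLL^1$ from the grading: it is the sum of the brackets $[\g_\mu\cap\LLL,\g_{-\mu}\cap\LLL]$. For real $\mu\in\Psi$ this vanishes, since solvability forbids $\Psi\cap(-\Psi)\neq\emptyset$ (a pair $\pm\gamma\in\Psi$ would produce a copy of $\sll_2$ in $\LLL$). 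For imaginary $\mu$, a bracket $[u,v]$ with $u\in\g_\mu\cap\LLL$, $v\in\g_{-\mu}\cap\LLL$ equals $(u|v)t_\mu$; if $(u|v)\neq 0$ then, using $(\mu|\mu)<0$, the triple $u,v,[u,v]$ satisfies the $\sll_2$ relations and again spans a non-solvable subalgebra of $\LLL$. Hence all these brackets vanish, so in fact $\hh\cap\LLL^1=\{0\}$ and condition (3) holds (the first clause holding trivially).

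Finally, for (3)$\Rightarrow$(2) I would again invoke Theorem~\ref{thmintro:structure_thm}, available since (3) assumes $\ad$-local finiteness. The condition $[\hh\cap\LLL^1,\LLL]=\{0\}$ forces $\Psi\cap(-\Psi)=\emptyset$: otherwise a coroot $t_\gamma=[e_\gamma,f_\gamma]$ with $\pm\gamma\in\Psi$ lies in $\hh\cap\LLL^1$ yet acts on $e_\gamma$ by $(\gamma|\gamma)>0$. With $\Psi$ one-sided, $\g_\Psi$ is nilpotent by the description of closed sets of real roots in \cite{CM18} (Proposition~\ref{prop:closed_set_real_roots}), and one checks, using the relations (2)--(3) of Theorem~\ref{thmintro:structure_thm} together with $[\LLL^{im+},\LLL^{im-}]\subseteq\LLL_0\oplus\g_\Psi$ and the centrality of $M:=\hh\cap\LLL^1$, that the lower central series of $\LLL^1/M$ descends into that of $\g_\Psi$ and therefore terminates; as $M$ is central, $\LLL^1$ is nilpotent. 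I expect the main obstacle to be the imaginary case of the $\ad$-local finiteness step: one must guarantee that the inductively produced elements of $\mathfrak{m}^{(k)}$ genuinely persist in \emph{distinct} imaginary grades whose pairwise inner products stay negative, so that Theorem~\ref{thmintro:main} keeps applying and its proportionality exception is avoided—and this is exactly where the absence of isotropic imaginary roots, equivalently of affine subdiagrams, is indispensable.
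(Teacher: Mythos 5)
Your proposal is correct, and its overall architecture is the same as the paper's: the same cycle of implications, with Theorem~\ref{thmintro:structure_thm} supplying the decomposition $\LLL=\LLL_0\oplus\g_{\Psi}\oplus\LLL^{im+}\oplus\LLL^{im-}$, and non-solvability extracted from iterated brackets whose degrees have pairwise negative inner products via Theorem~\ref{thmintro:main} --- this last mechanism is precisely the paper's Lemma~\ref{lemma:xy_non_solvable}. Where you differ is in how the individual steps are discharged, and your variants are sound. First, in (1)$\implies$(3) you obtain the infinite chain $(\ad x)^ny\neq 0$ directly from the negation of $\ad$-local finiteness (legitimate for a graded subalgebra, by splitting an arbitrary element into homogeneous components), so you never need Theorem~\ref{thm:adx^ny}; the paper instead routes through Lemma~\ref{lemma:solvable_implies}, whose proof does use it. Second, you prove the stronger statement $\hh\cap\LLL^1=\{0\}$ from the identity $[u,v]=(u|v)\,\mu^{\sharp}$ for $u\in\g_{\mu}$, $v\in\g_{-\mu}$, together with $(\mu|\mu)\neq 0$ for every root $\mu$ (absence of isotropic roots); the paper's Lemma~\ref{lemma:solvable_hh} is proved without the no-affine hypothesis but only yields $[\hh\cap\LLL^1,\LLL]=\{0\}$. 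Third, in (3)$\implies$(2) the paper simply invokes parts (4)--(6) of the body version of the structure theorem (Theorem~\ref{thm:subgA}), whereas you rebuild the nilpotency of $\LLL^1$ by hand: $M:=\hh\cap\LLL^1$ is a central ideal, modulo $M$ the derived algebra of $\LLL^1$ lands in $\g_{\Psi}$ and $[\g_{\Psi},\LLL^{im\pm}]=\{0\}$ pushes the lower central series of $\LLL^1/M$ into that of the nilpotent algebra $\g_{\Psi}$, and a central extension of a nilpotent Lie algebra is nilpotent. What the paper's factorization buys is generality and reuse: Lemmas~\ref{lemma:xy_non_solvable}, \ref{lemma:solvable_hh}, \ref{lemma:solvable_implies} and Theorem~\ref{thm:subgA}(4)--(7) hold for arbitrary symmetrisable $A$ and also feed into Theorem~\ref{thm:solvable_general}. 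What your version buys is a shorter, self-contained proof of this particular theorem that exploits the absence of isotropic roots at every step (only the tools stated in the introduction, Theorems~\ref{thmintro:main} and \ref{thmintro:structure_thm} plus Proposition~\ref{prop:closed_set_real_roots}, are needed).
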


Note that when $\Gamma(A)$ contains a subdiagram of affine type, Theorem~\ref{thmintro:solvable} does not hold anymore (see Example~\ref{example:solvable_locfin}). Nevertheless, a weaker form of Theorem~\ref{thmintro:solvable} can still be obtained without this additional hypothesis (see Theorem~\ref{thm:solvable_general}).

As a last remark, note that \emph{$\ad$-locally finite} subalgebras $\LLL$ of $\g(A)$ (in the sense that for every $y\in\g(A)$, there is some finite-dimensional subspace $V\subseteq\g(A)$ containing $y$ such that $\ad(\LLL)V\subseteq V$) were described in \cite[Theorem~3]{KP87}. Although related, this local finiteness condition is far more restrictive than only requiring the elements of $\LLL$ to be \emph{individually} $\ad$-locally finite \emph{on $\LLL$} (for instance, it forces $\LLL$ to be finite-dimensional), and in particular does not allow to describe the solvable and nilpotent (graded) subalgebras of $\g(A)$.

\subsection*{Conventions}
Throughout this paper, $\NN$ denotes the set of nonnegative integers, $\NN^*$ the set of positive integers, and $\ZZ^*$ the set of nonzero integers.

\section{Preliminaries}\label{section:preliminaries}
In this section, we fix some terminology and recall some basic facts about Kac--Moody algebras. The general reference for this section is \cite[Chapters~1--5 and \S 9.11]{Kac}. 

\subsection{Generalised Cartan matrices}
A {\bf generalised Cartan matrix} (GCM) is an integral matrix $A=(a_{ij})_{i,j\in I}$ indexed by some finite set $I$ such that
\begin{enumerate}
\item[(C1)] $a_{ii}=2$ for all $i\in I$;
\item[(C2)] $a_{ij}\leq 0$ for all $i,j\in I$ with $i\neq j$;
\item[(C3)] $a_{ij}=0\iff a_{ji}=0$ for all $i,j\in I$.
\end{enumerate}
The matrix $A$ is called {\bf symmetrisable} if there exists some diagonal matrix $D$ and some symmetric matrix $B$ such that $A=DB$.

\subsection{Kac--Moody algebras}\label{subsection:KMA}
Let $A=(a_{ij})_{i,j\in I}$ be a symmetrisable generalised Cartan matrix, and fix a realisation $(\hh,\Pi=\{\alpha_i \ | \ i \in I\},\Pi^{\vee}=\{\alpha^{\vee}_i \ | \ i \in I\})$ of $A$ as in \cite[Chapter~1]{Kac}. The {\bf Kac--Moody algebra} $\g(A)$ is the complex Lie algebra with generators $e_i,f_i$ ($i\in I$) and $\hh$, and defining relations
\begin{align}
[h,h']&=0 \quad\textrm{for all $h,h'\in\hh$;}\\
[h,e_i]&=\la\alpha_i,h\ra e_i\quad\textrm{and}\quad [h,f_i]=-\la\alpha_i,h\ra f_i \quad\textrm{for all $i\in I$;}\\
[f_j,e_i]&=\delta_{ij}\alpha_i^{\vee} \quad\textrm{for all $i,j\in I$;}\\
(\ad e_i)^{1-a_{ij}}e_j&=0\quad\textrm{and}\quad (\ad f_i)^{1-a_{ij}}f_j=0\quad\textrm{for all $i,j\in I$ with $i\neq j$.}
\end{align}
The elements $e_i,f_i$ ($i\in I$), as well as the space $\hh$, are identified with their canonical image in $\g(A)$, and are respectively called the {\bf Chevalley generators} and {\bf Cartan subalgebra} of $\g(A)$. The subalgebra of $\g(A)$ generated by the $e_i$ (resp. $f_i$) for $i\in I$ is denoted $\nn^+=\nn^+(A)$ (resp. $\nn^-$), and $\g(A)$ admits a triangular decomposition $$\g(A)=\nn^-\oplus\hh\oplus\nn^+\quad\textrm{(direct sum of vector spaces)}.$$
The adjoint action of $\hh$ on $\g(A)$ is diagonalisable, yielding a {\bf root space decomposition} 
$$\g(A)=\hh\oplus\bigoplus_{\alpha\in\Delta}\g_{\alpha},$$
where $\g_{\alpha}:=\{x\in\g(A) \ | \ [h,x]=\alpha(h)x \ \forall h\in\hh\}$ is the {\bf root space} attached to $\alpha\in\hh^*$, and where $\Delta:=\{\alpha\in\hh^*\setminus\{0\} \ | \ \g_{\alpha}\neq\{0\}\}$ is the corresponding set of {\bf roots}. 

Set $Q:=\bigoplus_{i\in I}\ZZ\alpha_i$. Every root $\alpha\in\Delta$ either belongs to $$Q_+:=\bigoplus_{i\in I}\NN\alpha_i$$ (in which case $\alpha$ is called {\bf positive}) or to $Q_-:=-Q_+$ (in which case $\alpha$ is called {\bf negative}); writing $\alpha=\sum_{i\in I}n_i\alpha_i$ for some $n_i\in\ZZ$, the number $\height(\alpha):=\sum_{i\in I}n_i$ is called the {\bf height} of $\alpha$. The set of all positive (resp. negative) roots is denoted $\Delta^+$ (resp. $\Delta^-$). 
An element $x\in\g_{\alpha}$ for some $\alpha\in\Delta\cup\{0\}$ (where $\g_0:=\hh$) is called {\bf homogeneous} of {\bf degree} $\deg(x):=\alpha$.

Setting $\hh':=\sum_{i\in I}\CC\alpha_i^{\vee}\subseteq\hh$, the derived algebra $\g'(A):=[\g(A),\g(A)]$ of $\g(A)$ has a triangular decomposition
$$\g'(A)=\nn^-\oplus\hh'\oplus\nn^+.$$
It has the same presentation as $\g(A)$, with $\hh$ replaced by $\hh'$. The center $\mathfrak c$ of $\g'(A)$ is contained in $\hh'$, and $\g'(A)/\mathfrak{c}$ is a simple Lie algebra. Moreover, if $\widetilde{\nn}^+$ denotes the free Lie algebra with generators $e_i$ ($i\in I$) and $\mathfrak{i}^+$ the ideal of $\widetilde{\nn}^+$ generated by the elements $(\ad e_i)^{1-a_{ij}}e_j$ ($i\neq j$), then the assignment $e_i\mapsto e_i$ defines an isomorphism
\begin{equation}\label{eqn:presentation_nn+}
\nn^+\cong \widetilde{\nn}^+/\mathfrak{i}^+.
\end{equation}

To any subset $\Psi\subseteq\Delta$, we associate the subspace
$$\g_{\Psi}:=\bigoplus_{\alpha\in\Psi}\g_{\alpha}$$
of $\g(A)$. The set $\Psi$ is {\bf closed} if $\alpha+\beta\in\Psi$ whenever $\alpha,\beta\in\Psi$ and $\alpha+\beta\in\Delta$.

The assignment
$$\omega(e_i):=-f_i, \quad \omega(f_i):=-e_i, \quad\textrm{and}\quad \omega(h):=-h\quad\textrm{for all $i\in I$ and $h\in\hh$}$$
defines an involutive automorphism $\omega$ of $\g(A)$, called the {\bf Chevalley involution}. Note that $\omega(\g_{\alpha})=\g_{-\alpha}$ for all $\alpha\in\Delta$; in particular, $\Delta^-=-\Delta^+$.

\subsection{Weyl group of $\g(A)$}\label{subsection:WGOgA}
The {\bf Weyl group} $\WW=\WW(A)$ of $\g(A)$ is the subgroup of $\GL(\hh^*)$ generated by the {\bf simple reflections}
$$s_i\co\hh^*\to\hh^*, \quad \alpha\mapsto \alpha-\la\alpha,\alpha_i^{\vee}\ra \alpha_i$$
for $i\in I$; the couple $(\WW,\{s_i \ | \ i\in I\})$ is then a Coxeter system. Alternatively, $\WW$ can be identified with the subgroup of $\GL(\hh)$ generated by the ``dual'' simple reflections
$$s_i^{\vee}\co\hh\to\hh, \quad h\mapsto h-\la\alpha_i,h\ra \alpha_i^{\vee}.$$

For each $i\in I$, the element $s_i^*:=\exp(\ad f_i)\exp(\ad e_i)\exp(\ad f_i)$ defines an automorphism of $\g(A)$, and the assignment $s_i^*\mapsto s_i$ defines a surjective group morphism $\pi\co\WW^*\to\WW$ from the group
$$\WW^*:=\la s_i^* \ | \ i\in I\ra\subseteq\Aut(\g(A))$$
to $\WW$. Moreover, the restriction of $\WW^*$ to $\hh$ coincides with $\WW\subseteq\GL(\hh)$, and
\begin{equation}
w^*\g_{\alpha}=\g_{w\alpha}\quad\textrm{for all $\alpha\in\Delta\cup\{0\}$ and all $w^*\in\WW^*$ with $\pi(w^*)=w$.}
\end{equation}
In particular, $\WW$ stabilises $\Delta\subseteq\hh^*$.

A root $\alpha\in\Delta$ is called {\bf real} if it belongs to $\Delta^{re}:=\WW\cdot\Pi$; otherwise, $\alpha$ is called {\bf imaginary}, and we set $\Delta^{im}:=\Delta\setminus\Delta^{re}$. We further set
$$\Delta^{re\pm}:=\Delta^{re}\cap\Delta^{\pm}\quad\textrm{and}\quad\Delta^{im\pm}:=\Delta^{im}\cap\Delta^{\pm}.$$
Then $\Delta^{im\pm}$ is a closed set of roots stabilised by $\WW$. In particular,
$$\nn^{im\pm}:=\g_{\Delta^{im\pm}}$$
is a $\WW^*$-invariant subalgebra of $\nn^{\pm}$.

If $\alpha=w\alpha_i$ for some $w\in\WW$ and $i\in I$, then $\alpha^{\vee}:=w\alpha_i^{\vee}$ depends only on $\alpha$, and is called the {\bf coroot} associated to $\alpha$. For each $\alpha\in\Delta^{re+}$, we fix a decomposition $\alpha=w_{\alpha}\alpha_i$ for some $w_{\alpha}\in\WW$ and $i\in I$ (with $w_{\alpha}:=1$ if $\alpha\in\Pi$). We also choose some $w_{\alpha}^*\in\WW^*$ with $\pi(w_{\alpha}^*)=w_{\alpha}$, and we set $$e_{\alpha}:=w_{\alpha}^*e_i\quad\textrm{and}\quad e_{-\alpha}:=w_{\alpha}^*f_i$$ (the element $e_{\alpha}$ is in fact independent of the choices of $i,w_{\alpha},w_{\alpha}^*$ up to a sign, but we will not need this fact). Thus  $\g_{\alpha}=\CC e_{\alpha}$, and
\begin{equation}\label{eqn:sl2_alpha}
[e_{-\alpha},e_{\alpha}]=\alpha^{\vee}\quad\textrm{and}\quad [\alpha^{\vee},e_{\pm\alpha}]=\pm 2e_{\pm\alpha}\quad\textrm{for all $\alpha\in\Delta^{re}$}.
\end{equation}

For any $\alpha\in\Delta$, we have
\begin{equation}\label{eqn:multiple_roots}
\ZZ\alpha\cap\Delta=\{\pm\alpha\}\quad\textrm{if $\alpha\in\Delta^{re}$}\quad\textrm{and}\quad \ZZ\alpha\cap\Delta=\ZZ^*\alpha\quad\textrm{if $\alpha\in\Delta^{im}$.}
\end{equation}

\subsection{Coxeter diagram of $A$}
The {\bf Coxeter diagram} $\Gamma(A)$ of $A$ is the graph with vertex set $\Pi$ and with an edge between $\alpha_i$ and $\alpha_j$ if and only if $a_{ij}<0$. We call $\Gamma(A)$ of {\bf affine type} if the corresponding Dynkin diagram is of affine type, in the sense of \cite[\S 4.8]{Kac}. The {\bf support} of an element $\alpha=\sum_{i\in I}n_i\alpha_i\in Q_+$ is the subdiagram $\supp(\alpha)$ of $\Gamma(A)$ with vertex set $\{\alpha_i \ | \ n_i\neq 0\}$. Here, by \emph{subdiagram of $\Gamma(A)$ with vertex set $S\subseteq\Pi$}, we always mean the subgraph of $\Gamma(A)$ with vertex set $S$ and with all the edges connecting the vertices in $S$. When convenient (and when no confusion is possible), we will also view $\supp(\alpha)$ as the subset $J$ of $I$ such that $\supp(\alpha)$ has vertex set $\{\alpha_j \ | \ j\in J\}$.

 Similarly, for any $w\in\WW$ with reduced decomposition $w=s_{i_1}\dots s_{i_d}$, the set $I_w:=\{i_1,\dots,i_d\}\subseteq I$ depends only on $w$, and we call the subdiagram $\supp(w)$ of $\Gamma(A)$ with vertex set $\{\alpha_i \ | \ i\in I_w\}$ the {\bf support} of $w$. 
 
 Any root $\alpha\in\Delta$ has connected support. Moreover, setting
$$K_0:=\{\alpha\in Q_+ \ | \ \textrm{$\la\alpha,\alpha_i^{\vee}\ra\leq 0$ for all $i\in I$} \},$$
the set $\Delta^{im+}$ of positive imaginary roots can be described as
\begin{equation}\label{eqn:Delta_im+}
\Delta^{im+}=\WW\cdot \{\alpha\in K_0 \ | \ \textrm{$\supp(\alpha)$ is connected}\}.
\end{equation}
Note that the second statement in (\ref{eqn:multiple_roots}) follows from (\ref{eqn:Delta_im+}).

\subsection{Invariant bilinear form of $\g(A)$}
Since $A$ is symmetrisable, $\g(A)$ admits a symmetric {\bf invariant bilinear form} $(\cdot|\cdot)\co\g(A)\times\g(A)\to\CC$ (see \cite[\S 2.3]{Kac}). The restriction of $(\cdot|\cdot)$ to $\hh$ is nondegenerate, and we denote by $\hh^*\to\hh, \ \alpha\mapsto\alpha^{\sharp}$ the induced isomorphism, characterised by
$$\la\beta,\alpha^{\sharp}\ra=(\beta^{\sharp}|\alpha^{\sharp})=:(\beta|\alpha)\quad\textrm{for all $\alpha,\beta\in\hh^*$}.$$ 
Then $(\alpha_i|\alpha_j)\leq 0$ for all $i,j\in I$ with $i\neq j$, and
\begin{equation}\label{eqn:coroot}
\alpha^{\vee}=\frac{2\alpha^{\sharp}}{(\alpha|\alpha)}\quad\textrm{for all $\alpha\in\Delta^{re}$.}
\end{equation}
Note also that 
$$[\alpha^{\sharp},x_{\beta}]=(\beta|\alpha)x_{\beta}\quad\textrm{for all $\alpha,\beta\in\hh^*$ and $x_{\beta}\in\g_{\beta}$}$$
and that 
\begin{equation}\label{eqn:prop_inv_form}
[\g_{-\alpha},\g_{\alpha}]=\CC\alpha^{\sharp}\quad\textrm{for all $\alpha\in\Delta$.}
\end{equation}
The symmetric bilinear form $$\hh^*\times\hh^*\to\CC, \ (\alpha,\beta)\mapsto (\alpha|\beta)$$ is $\WW$-invariant, and we have
\begin{equation}\label{eqn:norm_roots}
(\alpha|\alpha)>0\quad\textrm{for all $\alpha\in\Delta^{re}$}\quad\textrm{and}\quad (\beta|\beta)\leq 0 \quad\textrm{for all $\beta\in\Delta^{im}$}.
\end{equation}
A root $\alpha\in\Delta$ is {\bf isotropic} if $(\alpha|\alpha)=0$; otherwise, it is {\bf non-isotropic}. We denote by $\Delta^{im+}_{is}$ (resp. $\Delta^{im+}_{an}$) the set of isotropic (resp. non-isotropic) positive imaginary roots. 
If $\alpha\in K_0$, then
\begin{equation}\label{eqn:isotropic}
(\alpha|\alpha)=0 \iff \textrm{$\supp(\alpha)$ is a subdiagram of affine type}
\end{equation}
and
\begin{equation}\label{eqn:isotropic_zero}
(\alpha|\alpha)=0 \implies (\alpha|\alpha_i)=0 \quad \textrm{whenever $\alpha_i\in\supp(\alpha)$}.
\end{equation}
Moreover, if $\supp(\alpha)$ is of affine type and $\beta\in\Delta^{im}$, then
\begin{equation}\label{eqn:isotropic_multiple}
\supp(\beta)\subseteq\supp(\alpha) \implies \beta\in\QQ\alpha.
\end{equation}

\subsection{Closed sets of real roots}
Finally, we record for future reference the following result from \cite{CM18}. 

\begin{prop}\label{prop:closed_set_real_roots}
Let $\Psi\subseteq\Delta^{re}$ be a closed set of real roots and let $\g$ be the subalgebra of $\g(A)$ generated by $\g_{\Psi}$. Set $\Psi_s:=\{\alpha\in\Psi \ | \ -\alpha\in\Psi\}$ and $\Psi_n:=\Psi\setminus\Psi_s$. Set also $\hh_s:=\sum_{\gamma\in\Psi_s}\CC\gamma^{\vee}$, $\g_s:=\hh_s\oplus\g_{\Psi_s}$ and $\g_n:=\g_{\Psi_n}$. Then
\begin{enumerate}
\item
$\g_s$ is a subalgebra and $\g_n$ is an ideal of $\g$. In particular, $\g=\g_s\ltimes\g_n$.
\item
$\g_n$ is the largest nilpotent ideal of $\g$.
\item
$\g_s$ is a semisimple finite-dimensional Lie algebra with Cartan subalgebra $\hh_s$ and set of roots $\Psi_s$.
\end{enumerate}
Moreover, there exists some $N\in\NN$, depending only on the GCM $A$, such that $\g_n$ has nilpotency class at most $N$.
\end{prop}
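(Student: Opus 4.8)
The plan is to first pin down $\g$ as a vector space and then read off its Lie-theoretic structure. Since $\Psi$ is closed, I would begin by checking that $\hh_s\oplus\g_{\Psi_s}\oplus\g_{\Psi_n}$ is already stable under the bracket and contains the generating set $\g_\Psi$, so that it equals $\g$. The only brackets to inspect are $[\g_\alpha,\g_\beta]$ with $\alpha,\beta\in\Psi$: if $\alpha+\beta\in\Delta$ this lands in $\g_{\alpha+\beta}\subseteq\g_\Psi$ by closedness; if $\beta=-\alpha$ (forcing $\alpha\in\Psi_s$) it lands in $[\g_{-\alpha},\g_\alpha]=\CC\alpha^\sharp=\CC\alpha^\vee\subseteq\hh_s$ by \eqref{eqn:prop_inv_form} and \eqref{eqn:coroot}; otherwise it vanishes. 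Together with $[\hh_s,\g_\beta]\subseteq\g_\beta$ this closes up. For part (1) I then verify the bracket relations compartment by compartment. First $\Psi_s$ is itself closed and symmetric (symmetry is immediate, and closedness follows by applying closedness of $\Psi$ to both a pair and its opposite), so $\g_s=\hh_s\oplus\g_{\Psi_s}$ is a subalgebra. Next $\g_n=\g_{\Psi_n}$ is an ideal once one shows $\gamma+\alpha\in\Psi_n$ whenever $\gamma\in\Psi$, $\alpha\in\Psi_n$ and $\gamma+\alpha\in\Delta$: if instead $-(\gamma+\alpha)\in\Psi$, then closedness applied to $-(\gamma+\alpha),\gamma\in\Psi$ yields $-\alpha\in\Psi$, contradicting $\alpha\in\Psi_n$. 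This gives the semidirect decomposition $\g=\g_s\ltimes\g_n$.

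Part (3) is where the real content lies. I would first show that $\Psi_s$ is stable under each reflection $s_\alpha$ ($\alpha\in\Psi_s$): for $\beta\in\Psi_s$ the $\alpha$-string through $\beta$ is unbroken by standard $\sll_2$-theory (using that $\alpha$ is real, via \eqref{eqn:sl2_alpha}), and since $\pm\alpha,\beta\in\Psi_s$, closedness forces the whole string into $\Psi$; applying the same to $-\beta$ keeps the reflected root in $\Psi_s$, so $s_\alpha\beta\in\Psi_s$. Taking the indecomposable elements of $\Psi_s\cap\Delta^+$ as a base, I would exhibit a sub-GCM $A'$ for which $\g_s$ is the corresponding derived Kac--Moody algebra, with Cartan subalgebra $\hh_s$ and root system $\Psi_s$. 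The crucial point, and the \emph{main obstacle}, is finiteness of $\Psi_s$: I would argue that since every root of $\g(A')$ lies in $\Psi_s\subseteq\Delta^{re}$, the algebra $\g(A')$ has no imaginary root, which forces $A'$ to be of finite type; equivalently, by \eqref{eqn:norm_roots} all roots in $\Psi_s$ have $(\cdot|\cdot)$-norm in the finite set $\{(\alpha_i|\alpha_i)\}$ and lie in a lattice on which the restricted form is positive definite, so $\Psi_s$ is finite. Either way $\g_s$ is a finite-dimensional semisimple Lie algebra with the asserted Cartan subalgebra and set of roots.

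Finally, for part (2) I would exploit $\g=\g_s\ltimes\g_n$ together with the nilpotency of $\g_n$. Nilpotency holds because $\Psi_n$ is a closed set of real roots with $\Psi_n\cap(-\Psi_n)=\emptyset$ (a pair $\pm\alpha$ in $\Psi_n\subseteq\Psi$ would put $\alpha$ in $\Psi_s$), so iterated brackets strictly increase a height-type functional and must terminate; here I expect to invoke the combinatorial finiteness results of \cite{CM18} to bound the length of such chains. Granting nilpotency, $\g_n$ is a nilpotent ideal, and it is the largest one: any nilpotent ideal of $\g$ projects to a nilpotent ideal of $\g/\g_n\cong\g_s$, which vanishes since $\g_s$ is semisimple, so the ideal is contained in $\g_n$. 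The uniform bound $N$ on the nilpotency class, depending only on $A$ and not on $\Psi$, is the second genuinely hard input; I would likewise derive it from the classification of closed sets of real roots in \cite{CM18}, which bounds the relevant heights in terms of $A$ alone.
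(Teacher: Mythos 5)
Your overall architecture differs from the paper's: the paper proves this proposition purely by citation --- assertions (1)--(3) are the main theorem of \cite{CM18}, and the uniform bound $N$ comes from Proposition~7, Lemma~8 and (the proof of) Lemma~1 of \emph{loc.\ cit.} --- whereas you attempt to reprove most of it. Your elementary verifications are correct as far as they go: the identification $\g=\hh_s\oplus\g_{\Psi_s}\oplus\g_{\Psi_n}$, the closedness and reflection-invariance of $\Psi_s$, the ideal property of $\g_n$ (hence $\g=\g_s\ltimes\g_n$), and the deduction that $\g_n$ is the \emph{largest} nilpotent ideal once one knows that $\g_n$ is nilpotent and that $\g_s$ is semisimple, are all fine. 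It is also legitimate (and matches the paper) to defer to \cite{CM18} for the nilpotency of $\g_n$ and for the uniform bound $N$; note, though, that your ``height-type functional'' heuristic alone proves nothing here, since $\Psi_n$ may be infinite with unbounded heights, so the citation is doing all the work in that step.

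The genuine gap is in part (3), which you rightly single out as the main obstacle but do not close. Your second (``equivalently'') argument never uses closedness of $\Psi_s$ and therefore cannot work: two simple roots $\alpha_1,\alpha_2$ of a rank-$2$ affine subdiagram are real roots with norms in the prescribed finite set, yet the form on their span is only positive semidefinite (it vanishes on $\delta=\alpha_1+\alpha_2$); positive definiteness of $(\cdot|\cdot)$ on the span of $\Psi_s$ is essentially \emph{equivalent} to finite type, i.e.\ it is the statement to be proved, so this route is circular. Your first argument contains the unjustified step ``every root of $\g(A')$ lies in $\Psi_s$'': the morphism $\phi\co\g'(A')\to\g(A)$ sending the Chevalley generators to $e_{\pm\beta_i}$ (which exists because the Serre relations and the relations $[e_{\beta_i},e_{-\beta_j}]=0$, $i\neq j$, hold in $\g(A)$ by root-string and indecomposability arguments) could a priori annihilate the root spaces of $\g'(A')$ attached to imaginary roots of $A'$, in which case no contradiction with $\Psi_s\subseteq\Delta^{re}$ arises. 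To exclude this one needs that $\Ker\phi$ is central in $\g'(A')$ --- the Gabber--Kac simplicity theorem applied to the symmetrisable GCM $A'$ --- together with compatibility of the invariant forms, so that nonzero images of positive imaginary roots of $A'$ are imaginary roots of $A$ whose root spaces meet $\g_s$; one must also verify beforehand that the indecomposable elements of $\Psi_s\cap\Delta^+$ satisfy the GCM axioms and, afterwards, that the image of $\phi$ is all of $\g_s$ with root set exactly $\Psi_s$. None of this is in your sketch; it is precisely the content of the main theorem of \cite{CM18}, which you should cite for (3) just as you do for the nilpotency bounds, or else supply the Gabber--Kac argument in full.
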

\begin{proof}
The assertions (1)--(3) are contained in the main theorem of \cite{CM18}. Since, in the terminology of \cite{CM18}, $\Psi_n$ is pro-nilpotent by \cite[Proposition~7 and Lemma~8]{CM18}, the existence of $N\in\NN$ follows from (the proof of) \cite[Lemma~1]{CM18}.
\end{proof}

\section{Basic properties of roots}
In this section, we collect a few useful properties of roots and root spaces. We fix again a symmetrisable GCM $A$, and keep all notations from Section~\ref{section:preliminaries}.

We first recall the properties of root strings.
\begin{lemma}\label{lemma:RS}
Let $\alpha\in\Delta^{re}$ and $\beta\in\Delta$. Set $S(\alpha,\beta):=(\beta+\ZZ\alpha)\cap\Delta$. Then $S(\alpha,\beta)=\{\beta+n\alpha \ | \ -p\leq n\leq q\}$ for some $p,q\in\NN$ with $p-q=\la\beta,\alpha^{\vee}\ra$, and one of the following holds:
\begin{enumerate}
\item $S(\alpha,\beta)\cap\Delta^{re}=\varnothing$.
\item $|S(\alpha,\beta)\cap\Delta^{re}|=1$; in that case, $S(\alpha,\beta)=\{\beta\}$.
\item $|S(\alpha,\beta)\cap\Delta^{re}|=2$; in that case, $S(\alpha,\beta)\cap\Delta^{re}=\{\beta-p\alpha,\beta+q\alpha\}$.
\item $|S(\alpha,\beta)\cap\Delta^{re}|=3$; in that case, $S(\alpha,\beta)\subseteq\Delta^{re}$.
\item $|S(\alpha,\beta)\cap\Delta^{re}|=4$; in that case, $$S(\alpha,\beta)\cap\Delta^{re}=\{\beta-p\alpha, \ \beta-(p-1)\alpha, \ \beta+(q-1)\alpha, \ \beta+q\alpha\}.$$
\end{enumerate}
\end{lemma}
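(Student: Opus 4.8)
The plan is to derive the interval structure from the $\sll_2$-theory attached to the real root $\alpha$, and then to sort the resulting string into real and imaginary roots by analysing the quadratic $n\mapsto(\beta+n\alpha\mid\beta+n\alpha)$. Since $\alpha\in\Delta^{re}$, the operators $\ad e_{\pm\alpha}$ are locally nilpotent, so by \eqref{eqn:sl2_alpha} the space $M:=\bigoplus_{n\in\ZZ}\g_{\beta+n\alpha}$ is an integrable module over $\g^{(\alpha)}:=\CC e_{-\alpha}\oplus\CC\alpha^{\vee}\oplus\CC e_{\alpha}\cong\sll_2$, with $\ad\alpha^{\vee}$ acting on $\g_{\beta+n\alpha}$ as the scalar $\la\beta,\alpha^{\vee}\ra+2n$. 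Assuming first that $\beta\notin\ZZ\alpha$ (so that $0\notin\beta+\ZZ\alpha$), $M$ decomposes into finite-dimensional $\sll_2$-modules, each contributing an interval of weights symmetric about $0$; their union within the coset $\la\beta,\alpha^{\vee}\ra+2\ZZ$ is therefore an unbroken interval, and hence so is $S(\alpha,\beta)$. Some $w^*\in\WW^*$ lifting the reflection $s_{\alpha}\co\lambda\mapsto\lambda-\la\lambda,\alpha^{\vee}\ra\alpha$ maps $\g_{\beta+n\alpha}$ isomorphically onto $\g_{\beta-(\la\beta,\alpha^{\vee}\ra+n)\alpha}$, so writing $S(\alpha,\beta)=\{\beta+n\alpha\mid -p\le n\le q\}$, the induced involution $n\mapsto-\la\beta,\alpha^{\vee}\ra-n$ swaps the endpoints $-p$ and $q$, giving $p-q=\la\beta,\alpha^{\vee}\ra$. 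The excluded case $\beta\in\ZZ\alpha$ reduces, via \eqref{eqn:multiple_roots}, to $\beta\in\{\pm\alpha\}$ and $S(\alpha,\beta)=\{\pm\alpha\}$, which I would record separately.

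Next I set $f(n):=(\beta+n\alpha\mid\beta+n\alpha)=(\alpha|\alpha)n^2+2(\alpha|\beta)n+(\beta|\beta)$. Since $(\alpha|\alpha)>0$, this is a strictly convex quadratic, and by \eqref{eqn:norm_roots} the root $\beta+n\alpha$ is real precisely when $f(n)>0$. By \eqref{eqn:coroot} its vertex lies at $n_v=-(\alpha|\beta)/(\alpha|\alpha)=-\la\beta,\alpha^{\vee}\ra/2=(q-p)/2$, i.e. exactly at the midpoint of the string, about which $f$ is symmetric. Consequently $\{n\in[-p,q]:f(n)>0\}$ is symmetric about the midpoint and is the complement there of the subinterval on which $f\le0$; so either (i) $f>0$ throughout and every root of the string is real, or (ii) $f(n_v)\le0$ and the real roots form two symmetric end-blocks of equal size $k\ge0$, consisting of the outermost $k$ roots at each end. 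In particular $|S(\alpha,\beta)\cap\Delta^{re}|$ equals the length $p+q+1$ in case (i) and the even number $2k$ in case (ii); and since the endpoints maximise the convex function $f$, they are real as soon as any real root occurs.

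The crux is the bound $|S(\alpha,\beta)\cap\Delta^{re}|\le4$. Suppose this number is $\ge5$. Then either we are in case (i) with $m:=p+q\ge4$, or in case (ii) with $k\ge3$ and hence $m\ge2k\ge6$; in both situations the left endpoint $\gamma:=\beta-p\alpha$ and the root $\gamma+2\alpha$ are real, and $m\ge4$. Applying the interval formula to the string through $\gamma$ gives $\la\gamma,\alpha^{\vee}\ra=-m$, so by \eqref{eqn:coroot} $(\alpha|\gamma)=-m(\alpha|\alpha)/2$ and $s:=-\la\alpha,\gamma^{\vee}\ra=m(\alpha|\alpha)/(\gamma|\gamma)$ is a positive integer, whence $(\gamma|\gamma)=m(\alpha|\alpha)/s$. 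As $\gamma+2\alpha$ is real, $0<f(2)=(\gamma|\gamma)+(\alpha|\alpha)(4-2m)$, and substituting $(\gamma|\gamma)=m(\alpha|\alpha)/s$ yields $s<m/(2m-4)\le1$ for $m\ge4$, contradicting $s\ge1$. Hence there are at most four real roots, and matching the possible counts $0,1,2,3,4$ with the structure from the convexity step gives exactly alternatives (1)--(5): a count of $1$ forces case (i) of length $1$, i.e. $S(\alpha,\beta)=\{\beta\}$; a count of $3$ forces case (i) of length $3$, i.e. $S(\alpha,\beta)\subseteq\Delta^{re}$; and counts $2$ and $4$ place the real roots at the endpoint, respectively the endpoint and next-to-endpoint, positions, as stated.

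The main obstacle is precisely this last bound: everything else is a clean consequence of the $\sll_2$-string structure and the convexity of $f$, but ruling out five or more real roots requires combining the strict convexity of $f$ with the integrality of the Cartan pairing $\la\alpha,\gamma^{\vee}\ra$ of the two real roots $\alpha,\gamma$. A secondary nuisance is the careful bookkeeping of the boundary configurations---the degenerate case $\beta\in\{\pm\alpha\}$, the parity forcing odd counts into the all-real case, and the overlap between short all-real strings and the end-block description---so that the five alternatives are matched without gaps or double counting.
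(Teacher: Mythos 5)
The paper does not actually prove this lemma: its proof consists of the single citation ``See \cite[Proposition~1]{BP95}''. Your argument is therefore by necessity independent of the paper's, and the route you take is the natural one: the $\sll_2$-theory of $\g_{-\alpha}\oplus\CC\alpha^{\vee}\oplus\g_{\alpha}$ (via \eqref{eqn:sl2_alpha}) for the unbroken-interval structure and the relation $p-q=\la\beta,\alpha^{\vee}\ra$; the equivalence ``real $\iff$ positive norm'' from \eqref{eqn:norm_roots} combined with the strict convexity and midpoint symmetry of $f(n)=(\beta+n\alpha|\beta+n\alpha)$, which forces the real roots to form two equal end-blocks (or the whole string); and the integrality of $\la\alpha,\gamma^{\vee}\ra$ to exclude five or more real roots. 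I checked the crux and it is correct: with $\gamma=\beta-p\alpha$ and $m=p+q$, the identities $\la\gamma,\alpha^{\vee}\ra=-m$, $(\alpha|\gamma)=-\tfrac m2(\alpha|\alpha)$, $(\gamma|\gamma)=m(\alpha|\alpha)/s$ with $s:=-\la\alpha,\gamma^{\vee}\ra\in\NN^*$, and $(\gamma+2\alpha|\gamma+2\alpha)>0$ do yield $s<m/(2m-4)\le1$ for $m\ge4$, a contradiction; and the parity bookkeeping matching the counts $0,\dots,4$ to alternatives (1)--(5) is sound. Your decision to treat $\beta\in\{\pm\alpha\}$ separately is also the right instinct: there $S(\alpha,\beta)=\{\pm\alpha\}$ by \eqref{eqn:multiple_roots} is \emph{not} an unbroken interval, so the statement as literally written requires either excluding this case or working with $(\beta+\ZZ\alpha)\cap(\Delta\cup\{0\})$, as is customary.

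There is, however, one genuine gap: you never prove that the string is \emph{finite}, i.e. that $p,q\in\NN$. This is part of the statement, and it is silently assumed in the rest of your proof (the involution argument ``swaps the endpoints'', and the crux needs the left endpoint $\gamma=\beta-p\alpha$ to exist). The decomposition of $M=\bigoplus_{n\in\ZZ}\g_{\beta+n\alpha}$ into finite-dimensional irreducibles does \emph{not} by itself give finiteness: an infinite direct sum $\bigoplus_i V(d_i)$ with $d_i\to\infty$ is a perfectly good integrable $\sll_2$-module whose set of weights is an entire parity class --- still an ``unbroken interval symmetric about $0$'' in your sense, but an infinite one. The gap is easy to close with the tools you already set up. For instance: since $\beta\notin\ZZ\alpha$, every weight space of $M$ is a root space $\g_{\beta+n\alpha}$ of $\g(A)$ (or zero), hence finite-dimensional; each irreducible constituent of $M$ contains a nonzero vector of weight $\epsilon$, where $\epsilon\in\{0,1\}$ has the parity of $\la\beta,\alpha^{\vee}\ra$; so the number of constituents is at most $\dim M_{\epsilon}<\infty$, and $M$ is finite-dimensional. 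Alternatively, conjugate by $\WW^*$ so that $\alpha=\alpha_i$ is simple; if $\beta\in\Delta^+$ (the case $\beta\in\Delta^-$ being symmetric), every element of $S(\alpha,\beta)$ has the same coefficients as $\beta$ on the $\alpha_j$, $j\neq i$, not all zero, hence lies in $\Delta^+$, so $n\geq-\height_{\alpha_i}(\beta)$ on the string; the string is thus bounded below, hence also bounded above by its invariance under $n\mapsto-\la\beta,\alpha_i^{\vee}\ra-n$. With either patch inserted, your proof is complete.
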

\begin{proof}
See \cite[Proposition~1]{BP95}.
\end{proof}

\begin{lemma}\label{lemma:csupp}
Let $\beta\in\Delta^{im+}$. Then the following assertions hold:
\begin{enumerate}
\item
There is a unique $\beta'\in \WW.\beta\cap K_0$ \emph{(}namely, the unique element of $\WW.\beta$ of minimal height\emph{)}.
\item
$\supp(\beta')$ is a subdiagram of $\supp(\beta)$.
\item
$\beta'=w\beta$ for some $w\in\WW$ with $\supp(w)\subseteq\supp(\beta)$.
\end{enumerate}
\end{lemma}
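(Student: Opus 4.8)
The plan is to run a greedy height-reduction algorithm on $\beta$, using only simple reflections indexed by vertices of $\supp(\beta)$, and then to pin down the resulting element via the anti-dominance order. Since $\Delta^{im+}$ is $\WW$-stable, the whole orbit $\WW.\beta$ lies in $\Delta^{im+}$, hence in $Q_+$, so heights are positive integers and a minimal-height element exists. I would first record the elementary observation that if $\gamma\in Q_+\setminus\{0\}$ and $\la\gamma,\alpha_i^{\vee}\ra>0$, then necessarily $\alpha_i\in\supp(\gamma)$: writing $\gamma=\sum_j n_j\alpha_j$, the assumption $n_i=0$ would force $\la\gamma,\alpha_i^{\vee}\ra=\sum_{j\neq i}n_j a_{ij}\le 0$ by (C2), a contradiction.

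Starting from $\gamma_0:=\beta$, as long as $\gamma_k\notin K_0$ there is an index $i$ with $\la\gamma_k,\alpha_i^{\vee}\ra>0$; by the observation above $\alpha_i\in\supp(\gamma_k)$, and $\gamma_{k+1}:=s_i\gamma_k=\gamma_k-\la\gamma_k,\alpha_i^{\vee}\ra\alpha_i$ has strictly smaller height while satisfying $\supp(\gamma_{k+1})\subseteq\supp(\gamma_k)$ (only the $\alpha_i$-coefficient changes, and $\alpha_i$ was already in the support). As heights strictly decrease and are bounded below, the process terminates at some $\beta':=\gamma_m\in K_0$, and by construction $\beta'=w\beta$ where $w$ is a product of simple reflections $s_i$ with $\alpha_i\in\supp(\beta)$, and $\supp(\beta')\subseteq\supp(\beta)$. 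Since subdiagrams are taken to be full subgraphs, this already yields (2) and produces the element and the reflection word needed for (1) and (3).

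For the uniqueness in (1), I would invoke the standard anti-dominance inequality: for $\gamma\in K_0$ and any $u\in\WW$ one has $u\gamma-\gamma\in Q_+$, so $\height(u\gamma)\ge\height(\gamma)$ with equality if and only if $u\gamma=\gamma$. This immediately shows that any element of $\WW.\beta\cap K_0$ has minimal height in the orbit; and given two such elements $\beta',\beta''$, writing $\beta''=u\beta'$ gives both $\beta''-\beta'\in Q_+$ and $\beta'-\beta''\in Q_+$, forcing $\beta'=\beta''$. Thus $\WW.\beta\cap K_0$ reduces to the single minimal-height point, proving (1).

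Finally, for (3) it remains to upgrade ``$w$ is a product of $s_i$ with $\alpha_i\in\supp(\beta)$'' into ``$\supp(w)\subseteq\supp(\beta)$''. Setting $J:=\supp(\beta)$, the element $w$ lies in the standard parabolic subgroup $\WW_J:=\la s_i \mid \alpha_i\in J\ra$. The main (and only genuinely non-trivial) input is the Coxeter-theoretic fact that $\WW_J$ is itself a Coxeter group on the generators $\{s_i\mid \alpha_i\in J\}$ whose length function is the restriction of that of $\WW$; consequently every reduced expression of $w\in\WW_J$ uses only letters $s_i$ with $\alpha_i\in J$, so $I_w\subseteq J$ and $\supp(w)\subseteq\supp(\beta)$. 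I expect this last identification of $\supp(w)$ (reconciling the constructed, possibly non-reduced, word with the decomposition-independent set $I_w$), together with the anti-dominance inequality, to be the only points requiring care, the remainder being a routine bookkeeping of supports under the reduction.
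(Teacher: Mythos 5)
Your proof is correct and follows essentially the same route as the paper: a greedy height-reduction by simple reflections whose indices are forced (by the sign observation on $\la\gamma,\alpha_i^{\vee}\ra$) to lie in $\supp(\beta)$, with supports shrinking at each step. The only differences are organizational rather than mathematical: where the paper obtains the uniqueness in (1) by citing Kac's Proposition~5.2b, you derive it directly from the anti-dominance inequality $u\gamma-\gamma\in Q_+$ for $\gamma\in K_0$ (which is the standard proof of that cited result), and you make explicit the parabolic-subgroup fact needed to conclude $I_w\subseteq\supp(\beta)$ from a possibly non-reduced word --- a subtlety the paper's proof leaves implicit.
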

\begin{proof}
Note that $\beta\in K_0$ if and only if $\height(s_i\beta)\geq\height(\beta)$ for all $i\in I$. The first statement then follows from \cite[Proposition~5.2b]{Kac}. By uniqueness of $\beta'$, there is a sequence of elements $i_1,\dots,i_d\in I$ such that the roots $\beta_t:=s_{i_t}\dots s_{i_1}\beta$ ($t=0,\dots,d$) satisfy $\beta_0=\beta$, $\beta_d=\beta'$ and $\height(\beta_{t})<\height(\beta_{t-1})$ ($t=1,\dots,d$). In particular, $$\supp(\beta')=\supp(\beta_d)\subseteq \supp(\beta_{d-1})\subseteq\dots\subseteq \supp(\beta_0)=\supp(\beta)$$ and $\alpha_{i_t}\in\supp(\beta_{t-1})\subseteq\supp(\beta)$ for all $t=1,\dots,d$, yielding (2) and (3).
\end{proof}

Given $i\in I$, and $\alpha=\sum_{j\in I}n_j\alpha_j\in Q$, we set $\height_{\alpha_i}(\alpha):=n_i$.

\begin{lemma}\label{lemma:wK_0}
Let $\alpha\in\Delta^{im+}\cap K_0$ and $i\in I$ be such that $\la\alpha,\alpha_i^{\vee}\ra\neq 0$. Let $w\in\WW$ be such that $\height_{\alpha_i}(\alpha)=\height_{\alpha_i}(w\alpha)$. Then $\alpha_i\notin\supp(w)$.
\end{lemma}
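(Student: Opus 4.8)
Write $w=s_{j_1}\cdots s_{j_d}$ as a reduced decomposition, so that $I_w=\{j_1,\dots,j_d\}$, and suppose for contradiction that $i\in I_w$. The plan is to track the $\alpha_i$-coefficient along the chain of roots interpolating between $\alpha$ and $w\alpha$, and to show that the hypotheses force this coefficient to vary monotonically in a way incompatible with $\height_{\alpha_i}(\alpha)=\height_{\alpha_i}(w\alpha)$. For $m=0,1,\dots,d$ I set $u_m:=s_{j_{m+1}}\cdots s_{j_d}$ (so $u_d=1$, $u_0=w$) and $\gamma_m:=u_m\alpha$, so that $\gamma_d=\alpha$, $\gamma_0=w\alpha$ and $\gamma_{m-1}=s_{j_m}\gamma_m=\gamma_m-\la\gamma_m,\alpha_{j_m}^{\vee}\ra\alpha_{j_m}$. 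Since $s_{j_m}$ only modifies the $\alpha_{j_m}$-coefficient, the quantity $g(m):=\height_{\alpha_i}(\gamma_m)-\height_{\alpha_i}(\alpha)$ is unchanged at each index with $j_m\neq i$, while $g(m-1)-g(m)=-\la\gamma_m,\alpha_i^{\vee}\ra$ at each \emph{$i$-step} (index $m$ with $j_m=i$). The hypothesis reads $g(0)=g(d)=0$.

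First I would pin down the sign of $\la\gamma_m,\alpha_i^{\vee}\ra$ at each $i$-step. Since $u_{m-1}=s_iu_m$ is a suffix of a reduced word, it is reduced of length $\ell(u_m)+1$ in the Coxeter system $(\WW,\{s_i\mid i\in I\})$; the standard length criterion then gives $u_m^{-1}\alpha_i\in\Delta^{re+}$, whence by $\WW$-invariance of the pairing
\[
\la\gamma_m,\alpha_i^{\vee}\ra=\la\alpha,u_m^{-1}\alpha_i^{\vee}\ra=\la\alpha,(u_m^{-1}\alpha_i)^{\vee}\ra .
\]
As the coroot of a positive real root lies in $\sum_j\NN\alpha_j^{\vee}$ and $\alpha\in K_0$ is antidominant (i.e. $\la\alpha,\alpha_j^{\vee}\ra\leq 0$ for all $j$), this pairing is $\leq 0$, so $g(m-1)\geq g(m)$ at every $i$-step and $g$ is constant elsewhere. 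Telescoping, $\sum_{m=1}^{d}\big(g(m-1)-g(m)\big)=g(0)-g(d)=0$ is a sum of nonnegative terms, so each vanishes; in particular $\la\gamma_m,\alpha_i^{\vee}\ra=0$ at \emph{every} $i$-step.

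Now I would extract the contradiction from the last $i$-step $m_0:=\max\{m\mid j_m=i\}$. By maximality $u_{m_0}=s_{j_{m_0+1}}\cdots s_{j_d}$ involves no $s_i$, hence lies in the parabolic subgroup generated by $\{s_j\mid j\neq i\}$; since each such $s_j$ alters the $\alpha_i$-coefficient only by a multiple of $\alpha_j$, the positive real root $\eta:=u_{m_0}^{-1}\alpha_i$ satisfies $\height_{\alpha_i}(\eta)=1$. On the other hand $\la\alpha,\eta^{\vee}\ra=\la\gamma_{m_0},\alpha_i^{\vee}\ra=0$. Writing $\eta=\sum_j e_j\alpha_j$ and $\eta^{\vee}=\sum_j d_j\alpha_j^{\vee}$ with $d_j\geq 0$, the equality $0=\la\alpha,\eta^{\vee}\ra=\sum_j d_j\la\alpha,\alpha_j^{\vee}\ra$ is a sum of nonpositive terms, forcing $d_i\la\alpha,\alpha_i^{\vee}\ra=0$; as $\la\alpha,\alpha_i^{\vee}\ra\neq 0$ by hypothesis, $d_i=0$. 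Finally, combining $\eta^{\sharp}=\sum_j e_j\alpha_j^{\sharp}$ with \eqref{eqn:coroot} (applied to $\eta$ and to each $\alpha_j$) gives $d_j=e_j(\alpha_j|\alpha_j)/(\eta|\eta)$, and since $(\alpha_i|\alpha_i),(\eta|\eta)>0$ this yields $e_i=\height_{\alpha_i}(\eta)=0$, contradicting $\height_{\alpha_i}(\eta)=1$.

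I expect the main obstacle to be the sign control in the second step: the entire argument rests on showing $\la\gamma_m,\alpha_i^{\vee}\ra\leq 0$ at each $i$-step, which requires correctly combining the length criterion, the $\WW$-invariance of the pairing, and the antidominance of $\alpha\in K_0$. The decisive manoeuvre is to rewrite $\la\gamma_m,\alpha_i^{\vee}\ra$ as a pairing of the \emph{fixed} antidominant root $\alpha$ against a positive coroot, which turns a sign one has no direct control over into a manifestly nonpositive quantity. The remaining inputs — positivity of the coefficients of a positive real coroot, and the coefficientwise comparison between $\eta$ and $\eta^{\vee}$ via \eqref{eqn:coroot} — are routine.
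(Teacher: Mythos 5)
Your proof is correct, but its bookkeeping is dual to the paper's, and the two differ in what they need as input. The paper expands the difference in one stroke via the identity $\alpha-w\alpha=\sum_{t=1}^d\la\alpha,\alpha_{i_t}^{\vee}\ra\beta_t$, where $\beta_t:=s_{i_1}\dots s_{i_{t-1}}\alpha_{i_t}$ runs over the inversion set $\Delta^+\cap w\Delta^-$; taking $\height_{\alpha_i}$ of both sides gives a sum of nonpositive terms equal to $0$ (antidominance of $\alpha\in K_0$ is applied only to pairings against \emph{simple} coroots, and positivity enters through $\beta_t\in\Delta^+$), so every term vanishes, and the \emph{first} occurrence of $i$ in the word has $\height_{\alpha_i}(\beta_r)=1$, forcing $\la\alpha,\alpha_i^{\vee}\ra=0$ --- an immediate contradiction. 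You instead telescope suffix-by-suffix, which makes antidominance act on pairings $\la\alpha,(u_m^{-1}\alpha_i)^{\vee}\ra$ against coroots of arbitrary positive real roots; this requires the extra fact that such coroots lie in $\sum_j\NN\alpha_j^{\vee}$, and your contradiction at the \emph{last} occurrence of $s_i$ needs one more conversion --- from vanishing of the coroot coefficient $d_i$ to vanishing of the root coefficient $e_i$ --- which you carry out via (\ref{eqn:coroot}), i.e.\ via the invariant bilinear form. Both arguments share the same skeleton (a sign-forcing argument from $\alpha\in K_0$ plus an extremal occurrence of $s_i$ in a reduced word), but the paper's choice of decomposition keeps everything at the level of simple coroots and positive roots, so it is shorter and, importantly, uses no symmetrisability: the remark following Lemma~\ref{lemma:imaginary_not_connected} asserts that Lemma~\ref{lemma:wK_0} holds for an arbitrary GCM, which your final step as written would not justify (though it can be repaired by replacing the appeal to (\ref{eqn:coroot}) with the fact, valid for any GCM via the dual root system, that a positive real root and its coroot have the same support).
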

\begin{proof}
Let $w=s_{i_1}\dots s_{i_d}$ be a reduced decomposition of $w$. For each $t\in\{1,\dots,d\}$, set $\beta_t:=s_{i_1}\dots s_{i_{t-1}}\alpha_{i_t}\in\Delta^{re+}$, so that $$\{\beta_t \ | \ 1\leq t\leq d\}=\Delta^+\cap w\Delta^-$$ (see e.g. \cite[Exercise~4.33]{KMGbook}). Then $$\alpha-w\alpha=\sum_{t=1}^d\la\alpha,\alpha_{i_t}^{\vee}\ra \beta_t$$ (see e.g. \cite[Exercise~4.34]{KMGbook}). Hence $$0=\height_{\alpha_i}(\alpha-w\alpha)=\sum_{t=1}^d\la\alpha,\alpha_{i_t}^{\vee}\ra \height_{\alpha_i}(\beta_t).$$ Since $\la\alpha,\alpha_{i_t}^{\vee}\ra\leq 0$ for all $t=1,\dots,d$, this implies that $$\la\alpha,\alpha_{i_t}^{\vee}\ra \height_{\alpha_i}(\beta_t)=0\quad\textrm{for all $t=1,\dots,d$.}$$ Assume for a contradiction that $\alpha_i\in\supp(w)$, and let $r\in\{1,\dots,d\}$ be minimal such that  $i_r=i$. Then $\height_{\alpha_i}(\beta_r)=1$ and hence  $\la\alpha,\alpha_i^{\vee}\ra= 0$, a contradiction.
\end{proof}

\begin{lemma}\label{lemma:imaginary_not_connected}
Let $\beta_1,\dots,\beta_r\in\Delta^{im+}$ be such that $\beta_i+\beta_j\notin\Delta$ for all $i\neq j$. Then there exists some $w\in\WW$ such that $w\beta_t\in K_0$ for all $t=1,\dots,r$, and such that $\supp(w\beta_1),\dots,\supp(w\beta_r)$ are $r$ distinct connected components in the subdiagram $\supp(w\beta_1)\cup\dots\cup\supp(w\beta_r)$.
\end{lemma}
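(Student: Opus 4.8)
The plan is to exploit the $\WW$-invariance of both the hypothesis and the desired conclusion. Since $\WW$ stabilises $\Delta$, the condition $\beta_i+\beta_j\notin\Delta$ is equivalent to $w\beta_i+w\beta_j\notin\Delta$ for every $w\in\WW$; moreover each $w\beta_t$ is again a positive imaginary root, and hence has connected support. Thus the whole statement reduces to two tasks: \textbf{(a)} find a single $w\in\WW$ with $w\beta_t\in K_0$ for \emph{all} $t$; and \textbf{(b)} show that positive imaginary roots lying in $K_0$ and having pairwise non-root sums automatically have pairwise disjoint, non-adjacent supports. Granting both, $\supp(w\beta_1),\dots,\supp(w\beta_r)$ are then $r$ connected, pairwise disjoint, pairwise non-adjacent subdiagrams, hence exactly the connected components of their union.

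Task \textbf{(b)} is the easy half. If $\alpha,\gamma\in\Delta^{im+}\cap K_0$, then $\alpha+\gamma\in Q_+\setminus\{0\}$ satisfies $\la\alpha+\gamma,\alpha_i^\vee\ra\leq 0$ for all $i\in I$, so $\alpha+\gamma\in K_0$; by \eqref{eqn:Delta_im+}, if in addition $\supp(\alpha+\gamma)=\supp(\alpha)\cup\supp(\gamma)$ is connected, then $\alpha+\gamma\in\Delta^{im+}$. Contrapositively, $\alpha+\gamma\notin\Delta$ forces $\supp(\alpha)\cup\supp(\gamma)$ to be disconnected; as each of the two supports is itself connected (being the support of a root), they must be distinct connected components, in particular disjoint and joined by no edge. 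Applying this to each pair $w\beta_i,w\beta_j$ then gives the desired conclusion.

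For task \textbf{(a)}, I would choose $w$ to minimise $\sum_{t=1}^r\height(w\beta_t)$ over $w\in\WW$; a minimum is attained, since this sum takes values in $\NN$ and is bounded below by $\sum_t\height(\beta_t')$, where $\beta_t'$ is the minimal-height representative furnished by Lemma~\ref{lemma:csupp}. Writing $\gamma_t:=w\beta_t$ for a minimiser, minimality under left-multiplication by each $s_i$ gives $\sum_t\height(s_i\gamma_t)\geq\sum_t\height(\gamma_t)$; since $\height(s_i\mu)=\height(\mu)-\la\mu,\alpha_i^\vee\ra$ for any positive root $\mu$, this reads $\sum_t\la\gamma_t,\alpha_i^\vee\ra\leq 0$ for every $i\in I$, that is, $\sum_t\gamma_t\in K_0$. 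It remains to upgrade this to the statement that each \emph{individual} $\gamma_t$ lies in $K_0$.

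This last upgrade is the crux and the main obstacle. Suppose some $\gamma_s\notin K_0$, so $\la\gamma_s,\alpha_i^\vee\ra>0$ for some $i$; since $\sum_t\la\gamma_t,\alpha_i^\vee\ra\leq 0$, some other $\gamma_t$ satisfies $\la\gamma_t,\alpha_i^\vee\ra<0$. The key claim is that \emph{two positive imaginary roots with strictly opposite pairings against a single $\alpha_i^\vee$ must have their sum in $\Delta$}: this would contradict $\gamma_s+\gamma_t=w(\beta_s+\beta_t)\notin\Delta$ and force every $\gamma_t\in K_0$, finishing the proof. The claim itself I expect to be the hard technical point. One gets for free that $\la\gamma_s,\alpha_i^\vee\ra>0$ implies $i\in\supp(\gamma_s)$, while $\la\gamma_t,\alpha_i^\vee\ra<0$ implies $i\in\supp(\gamma_t)$ or $i$ adjacent to $\supp(\gamma_t)$, so that $\supp(\gamma_s)\cup\supp(\gamma_t)$ is connected through the ``bridge'' vertex $i$. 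The difficulty is that connectedness of the union \emph{alone} does not guarantee that the sum is a root (as the same-sign situation shows, where reducing the sum to $K_0$ can delete the bridge and disconnect the support); the opposite-sign hypothesis is precisely what must rule this out. I would establish the claim by inducting on $\height(\gamma_s)+\height(\gamma_t)$, reducing to the $K_0$-sum argument of task \textbf{(b)} and invoking Lemma~\ref{lemma:wK_0} to ensure that the bridge vertex $i$ is never removed by the height-reducing reflections, so that the support stays connected throughout the reduction.
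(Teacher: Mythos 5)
Your reduction is sound as far as it goes: the $\WW$-invariance observations, task \textbf{(b)} (closure of $K_0$ under addition plus (\ref{eqn:Delta_im+})), and the height-minimisation in task \textbf{(a)} giving $\sum_t\gamma_t\in K_0$ are all correct. But there is a genuine gap exactly where you flag it: the ``key claim'' (two positive imaginary roots pairing with strictly opposite signs against some $\alpha_i^{\vee}$ have their sum in $\Delta$) is asserted with only a plan, and the plan as described does not work. An induction on $\height(\gamma_s)+\height(\gamma_t)$ has no viable induction step: the natural height-reducing move at $\gamma_s$ is $s_i$ itself, but applying $s_i$ replaces the pair $(\gamma_s,\gamma_t)$ by $(s_i\gamma_s,s_i\gamma_t)$, which satisfies the \emph{same} opposite-sign hypothesis with the roles of $s$ and $t$ exchanged, while the total height changes by $-\la\gamma_s+\gamma_t,\alpha_i^{\vee}\ra$, which may well be positive --- so no potential decreases. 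Moreover, Lemma~\ref{lemma:wK_0} cannot be invoked in this situation: its hypothesis requires the root $\alpha$ to lie in $K_0$, whereas in your setting $\gamma_s$ is by assumption \emph{not} in $K_0$ (and $\gamma_t$ need not be); the paper only ever applies that lemma to a root already known to lie in $K_0$ and fixed by the relevant Weyl group element. So, as written, the crux of your argument is missing.

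The key claim is in fact true, and admits a two-line proof --- but via the invariant bilinear form rather than the combinatorics you propose: since $s_i\gamma_t\in\Delta^{im+}$ and
$$(\gamma_s|s_i\gamma_t)=(\gamma_s|\gamma_t)-\la\gamma_s,\alpha_i^{\vee}\ra(\alpha_i|\gamma_t),$$
with $\la\gamma_s,\alpha_i^{\vee}\ra>0$ and $(\alpha_i|\gamma_t)<0$, Lemma~\ref{lemma:basic_propI}(1) applied to the pair $(\gamma_s,s_i\gamma_t)$ forces $(\gamma_s|\gamma_t)<0$, and then Lemma~\ref{lemma:basic_propI}(3) gives $\gamma_s+\gamma_t\in\Delta^{im+}$. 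With this substitution your argument becomes a complete proof, and a genuinely different one from the paper's: the paper inducts on $r$, proving the case $r=2$ by minimising the height of the \emph{sum} $\beta_1+\beta_2$ (so that the height-reducing reflections furnished by Lemma~\ref{lemma:csupp} are supported in one connected component and fix the other summand) and then separating $\beta_{r+1}$ from each $\beta_t$ via Lemma~\ref{lemma:wK_0}; your route avoids the induction on $r$ altogether, at the cost of using the bilinear form --- so, unlike the paper's purely combinatorial proof, it would not extend to non-symmetrisable GCMs (cf.\ the remark following the lemma in the paper).
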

\begin{proof}
We prove the claim by induction on $r$.
For $r=1$, the claim is clear. Let now $r=2$. Since $\WW\cdot\Delta^{im+}=\Delta^{im+}\subseteq Q_+$, we find some $w\in\WW$ such that $w(\beta_1+\beta_2)\in Q_+$ has minimal height in $\WW.(\beta_1+\beta_2)$. Thus, $w\beta_1+w\beta_2\in K_0\setminus\Delta^{im+}$, so that $\supp(w\beta_1)$ and $\supp(w\beta_2)$ are two distinct connected components of $\supp(w\beta_1)\cup\supp(w\beta_2)$ by (\ref{eqn:Delta_im+}). In particular, $w\beta_j\in K_0$ for $j=1,2$, yielding the claim in that case: otherwise, we find by Lemma~\ref{lemma:csupp} some $v_j\in\WW$ ($j=1,2$) such that $\supp(v_j)\subseteq\supp(w\beta_j)$ and $\height(v_1w\beta_1)+\height(v_2w\beta_2)<\height(w\beta_1)+\height(w\beta_2)$ (in particular, $v_1v_2=v_2v_1$ and $v_iw\beta_{j}=w\beta_j$ for $i\neq j$), contradicting the fact that
$$\height(v_1w\beta_1)+\height(v_2w\beta_2)=\height(v_1v_2w(\beta_1+\beta_2)).$$

Assume next that the claim holds for some $r\geq 2$, and let us prove it for $r+1$. By induction hypothesis, there is no loss of generality in assuming that $\beta_t\in K_0$ for all $t=1,\dots,r$, and that $\supp(\beta_1),\dots,\supp(\beta_{r})$ are $r$ distinct connected components in the subdiagram $\supp(\beta_1)\cup\dots\cup\supp(\beta_r)$. 

We claim that $\supp(\beta_t)\cup\supp(\beta_{r+1})$ is not connected for any given $t\in\{1,\dots,r\}$. Indeed, by the case $r=2$, we find some $v_t\in\WW$ such that $v_t\beta_t\in K_0$ and $v_t\beta_{r+1}\in K_0$, and such that $\supp(v_t\beta_t)$ and $\supp(v_t\beta_{r+1})$ are distinct connected components of $\supp(v_t\beta_t)\cup\supp(v_t\beta_{r+1})$. Note that $v_t\beta_t=\beta_t$ by Lemma~\ref{lemma:csupp}(1) and $\supp(v_t\beta_{r+1})\subseteq\supp(\beta_{r+1})$ by Lemma~\ref{lemma:csupp}(2). Hence, if $\supp(\beta_t)\cup\supp(\beta_{r+1})$ were connected, there would exist some $i\in I$ with $\alpha_i\in \supp(\beta_{r+1})\setminus\supp(v_t\beta_{r+1})$ such that $\alpha_i\notin\supp(\beta_t)$ and $\supp(\beta_t)\cup\{\alpha_i\}$ is connected. But then $\la\beta_t,\alpha_i^{\vee}\ra<0$, so that Lemma~\ref{lemma:wK_0} (with $\alpha:=\beta_t$ and $w:=v_t$) would imply that $\alpha_i\notin\supp(v_t)$, and hence that $\height_{\alpha_i}(\beta_{r+1})=\height_{\alpha_i}(v_t\beta_{r+1})=0$, a contradiction. 

Finally, Lemma~\ref{lemma:csupp} yields some $w\in\WW$ with $\supp(w)\subseteq\supp(\beta_{r+1})$ such that $w\beta_{r+1}\in K_0$. As $w\beta_t=\beta_t$ for all $t=1,\dots,r$, this completes the induction step.
\end{proof}

\begin{remark}
Note that, up to now, we did not make use of the symmetrisability assumption on $A$. In particular, Lemmas~\ref{lemma:csupp}, \ref{lemma:wK_0} and \ref{lemma:imaginary_not_connected} remain valid for an arbitrary GCM $A$.
\end{remark}

\begin{lemma}\label{lemma:basic_propI}
Let $\alpha,\beta\in\Delta^{im+}$. Then the following assertions hold:
\begin{enumerate}
\item
$(\alpha|\beta)\leq 0$.
\item
If $\alpha+\beta\in\Delta^{+}$, then $(\alpha|\beta)<0$ unless $\alpha,\beta$ are proportional isotropic roots.
\item
If $(\alpha|\beta)<0$ then $\alpha+\beta\in\Delta^{im+}$.
\end{enumerate}
\end{lemma}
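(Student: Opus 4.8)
The plan is to reduce everything to the fundamental domain $K_0$ and to exploit the single computation that, for $\gamma\in K_0$ and a simple root $\alpha_i$, one has $(\gamma|\alpha_i)=\tfrac{(\alpha_i|\alpha_i)}{2}\la\gamma,\alpha_i^{\vee}\ra\leq 0$ by (\ref{eqn:coroot}), since $(\alpha_i|\alpha_i)>0$ (as $\alpha_i\in\Delta^{re}$) and $\la\gamma,\alpha_i^{\vee}\ra\leq 0$ by definition of $K_0$; hence $(\gamma|\delta)\leq 0$ for every $\delta\in Q_+$. For (1), I would first use (\ref{eqn:Delta_im+}) (or Lemma~\ref{lemma:csupp}(1)) to pick $w\in\WW$ with $w\alpha\in K_0$. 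Since $\Delta^{im+}$ is $\WW$-stable, $w\beta\in\Delta^{im+}\subseteq Q_+$, so $(\alpha|\beta)=(w\alpha|w\beta)\leq 0$ by $\WW$-invariance of the form.

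I would prove (3) before (2), as it is immediate from Lemma~\ref{lemma:imaginary_not_connected}. Arguing by contraposition, suppose $\alpha+\beta\notin\Delta$. The case $r=2$ of Lemma~\ref{lemma:imaginary_not_connected} yields $w\in\WW$ with $w\alpha,w\beta\in K_0$ such that $\supp(w\alpha)$ and $\supp(w\beta)$ are two \emph{distinct} connected components of their union; in particular there is no edge between them, so $(\alpha_i|\alpha_j)=0$ whenever $\alpha_i\in\supp(w\alpha)$ and $\alpha_j\in\supp(w\beta)$. Expanding $(w\alpha|w\beta)$ over these supports gives $(\alpha|\beta)=(w\alpha|w\beta)=0$. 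Thus $(\alpha|\beta)<0$ forces $\alpha+\beta\in\Delta$, and then $(\alpha+\beta|\alpha+\beta)=(\alpha|\alpha)+2(\alpha|\beta)+(\beta|\beta)<0$ (by (\ref{eqn:norm_roots}) and $(\alpha|\beta)<0$) shows $\alpha+\beta\in\Delta^{im}$, which is positive as a sum of positive roots; this is (3).

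For (2), assume $(\alpha|\beta)=0$. Since $\Delta^{im+}$ is closed and $\alpha+\beta\in\Delta^+$, we get $\alpha+\beta\in\Delta^{im+}$, so $\supp(\alpha+\beta)=\supp(\alpha)\cup\supp(\beta)$ is connected. Using $\WW$-invariance I would reduce to $\alpha\in K_0$ (this preserves the hypotheses and the $\WW$-invariant conclusion). Writing $\beta=\sum_i b_i\alpha_i$, the identity $0=(\alpha|\beta)=\sum_i b_i(\alpha|\alpha_i)$ with all summands $\leq 0$ forces $(\alpha|\alpha_i)=0$ for every $\alpha_i\in\supp(\beta)$. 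For $\alpha_i\in\supp(\beta)\setminus\supp(\alpha)$ this reads $\sum_{\alpha_j\in\supp(\alpha)}a_j(\alpha_j|\alpha_i)=0$ with each term $\leq 0$, whence $(\alpha_j|\alpha_i)=0$ for all such $j$; so no vertex of $\supp(\beta)\setminus\supp(\alpha)$ is joined to $\supp(\alpha)$, and connectedness of $\supp(\alpha+\beta)$ forces $\supp(\beta)\subseteq\supp(\alpha)$.

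To conclude, set $J':=\supp(\beta)$ and split $\alpha=\alpha'+\alpha''$, with $\alpha'$ supported on $J'$ (full support there) and $\alpha''$ on $\supp(\alpha)\setminus J'$. For $\alpha_i\in J'$, $0=(\alpha|\alpha_i)=(\alpha'|\alpha_i)+(\alpha''|\alpha_i)$ with $(\alpha''|\alpha_i)\leq 0$, so $\la\alpha',\alpha_i^{\vee}\ra\geq 0$ for all $\alpha_i\in J'$: thus $\alpha'$ is a nonzero dominant element of full support for the indecomposable submatrix $A_{J'}$. As $J'$ supports the imaginary root $\beta$, $A_{J'}$ is not of finite type; the Vinberg--Kac trichotomy \cite[Ch.~4]{Kac} excludes the indefinite case (which admits no nonzero $v\geq 0$ with $\la v,\alpha_i^{\vee}\ra\geq 0$ for all $i$), so $A_{J'}$ is of affine type and $\alpha'$ is a multiple of its null root. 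Then $(\alpha'|\alpha_i)=0$, hence $(\alpha''|\alpha_i)=0$, for all $\alpha_i\in J'$; this removes every edge between $J'$ and $\supp(\alpha)\setminus J'$, so by connectedness $\supp(\alpha)=J'$ is affine and $(\alpha|\alpha)=\sum_i a_i(\alpha|\alpha_i)=0$. Therefore $\alpha$ is isotropic with affine support, and (\ref{eqn:isotropic_multiple}) gives $\beta\in\CC\alpha$, making $\alpha,\beta$ proportional isotropic roots. The hard part is exactly this final paragraph: converting the orthogonality $(\alpha|\beta)=0$ into isotropy and proportionality relies on the affine/indefinite dichotomy together with the affine null-root (radical) argument, whereas the support manipulations and the reductions to $K_0$ are routine.
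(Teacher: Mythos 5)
Your proposal is correct, and parts (1) and (3) are essentially the paper's own argument: (1) is identical (reduce $\alpha$ to $K_0$, use $\WW$-invariance), and your (3) simply packages the paper's inline minimal-height/connected-components argument into an appeal to the case $r=2$ of Lemma~\ref{lemma:imaginary_not_connected} (whose proof is independent of this lemma, so there is no circularity), finishing with a norm computation where the paper instead cites (\ref{eqn:Delta_im+}) directly. Where you genuinely diverge is in the hard step of (2). After the common reduction ($\alpha\in K_0$, $(\alpha|\alpha_i)=0$ on $\supp(\beta)$, and $\supp(\beta)\subseteq\supp(\alpha)$ by connectedness), the paper exploits a symmetry trick: since the simple reflections $s_i$ with $\alpha_i\in\supp(\beta)$ fix $\alpha$, it conjugates $\beta$ into $K_0$ by some $w$ with $\supp(w)\subseteq\supp(\beta)$ (Lemma~\ref{lemma:csupp}) without moving $\alpha$, then swaps the roles of $\alpha$ and $\beta$ to conclude $\supp(\alpha)=\supp(\beta)$ and $(\alpha|\alpha)=(\beta|\beta)=0$, whence affinity by (\ref{eqn:isotropic}). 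You instead split $\alpha=\alpha'+\alpha''$ along $J'=\supp(\beta)$, observe that $\alpha'$ is a nonzero $A_{J'}$-dominant vector with full support, and invoke the Vinberg--Kac trichotomy of \cite[Ch.~4]{Kac}: indefinite type is excluded, finite type is excluded because $J'$ carries an imaginary root, so $A_{J'}$ is affine and $\alpha'$ is a multiple of the null root; propagating the resulting zeros then forces $\supp(\alpha)=J'$ and $(\alpha|\alpha)=0$. Both routes end with (\ref{eqn:isotropic_multiple}). The paper's argument is shorter and stays entirely inside its own Section~3 toolkit; yours trades the second $\WW$-conjugation and role-swap for a direct linear-algebra argument, at the cost of importing the full (Fin)/(Aff)/(Ind) classification — a standard but heavier external input. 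Your identification of the final paragraph as the genuinely hard part is accurate.
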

\begin{proof}
The lemma sums up Exercises~5.16, 5.17, and 5.18 in \cite{Kac}. For the convenience of the reader, here are more detailed proofs.

(1) Using the $\WW$-action, there is no loss of generality in assuming that $\alpha\in K_0$ (see (\ref{eqn:Delta_im+})). But then $(\alpha|\alpha_i)\leq 0$ for all $i\in I$. Since $\beta\in Q_+$, the claim follows.

(2) Assume that $\alpha+\beta\in\Delta^{im+}$ and that $(\alpha|\beta)=0$. As in the proof of (1), there is no loss of generality in assuming that $\alpha\in K_0$, so that $(\alpha|\alpha_i)=0$ for all $i\in I$ with $\alpha_i\in\supp(\beta)$. In particular, $\supp(\beta)\subseteq\supp(\alpha)$: otherwise, since $\supp(\alpha)\cup\supp(\beta)=\supp(\alpha+\beta)$ is connected, there would exist some $j\in\supp(\beta)\setminus\supp(\alpha)$ such that the subdiagram $\supp(\alpha)\cup\{\alpha_j\}$ is connected, and hence $(\alpha|\alpha_j)<0$, a contradiction. Up to conjugating $\beta$ by some $w\in\WW$ with $\supp(w)\subseteq\supp(\beta)$ (so that $w\alpha=\alpha$), we may then assume by Lemma~\ref{lemma:csupp} that $\beta\in K_0$ as well. Exchanging the roles of $\alpha$ and $\beta$ in the above argument, we deduce that $\supp(\alpha)=\supp(\beta)$ and that $(\alpha|\alpha)=(\beta|\beta)=0$. Moreover, (\ref{eqn:isotropic}) implies that $\supp(\alpha)$ is a subdiagram of affine type, and hence $\alpha,\beta$ are proportional isotropic roots by (\ref{eqn:isotropic_multiple}), as desired.

(3) Up to replacing $\alpha+\beta$ with an element of minimal height in $\WW.(\alpha+\beta)\subseteq Q_+$, we may assume that $\alpha+\beta\in K_0$. We claim that $\supp(\alpha+\beta)$ is connected, so that (3) follows from (\ref{eqn:Delta_im+}). Otherwise, since $\supp(\alpha)$ and $\supp(\beta)$ are connected, they are distinct connected components of $\supp(\alpha+\beta)=\supp(\alpha)\cup\supp(\beta)$, and hence $(\alpha|\beta)=0$, a contradiction.
\end{proof}

\begin{lemma}\label{lemma:prop36}
Let $\alpha\in\Delta^{re}$ and $\beta\in\Delta$.
\begin{enumerate}
\item
 If $(\alpha|\beta)<0$, then $[x,y]\neq 0$ for all nonzero $x\in\g_{\alpha}$ and $y\in\g_{\beta}$.
 \item
 $[\g_{\alpha},\g_{\beta}]= \{0\}$ if and only if $\alpha+\beta\notin\Delta$. 
\end{enumerate}
\end{lemma}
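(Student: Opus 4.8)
The plan is to fix the real root $\alpha$ and work entirely with the $\sll_2$-triple $(e_\alpha,\alpha^\vee,e_{-\alpha})$ furnished by (\ref{eqn:sl2_alpha}). Since $\alpha\in\Delta^{re}$, the space $\g_\alpha=\CC e_\alpha$ is one-dimensional, so both parts reduce to analysing the single linear operator $\ad e_\alpha\colon\g_\beta\to\g_{\alpha+\beta}$: indeed $[\g_\alpha,\g_\beta]=\ad(e_\alpha)(\g_\beta)$. Because $\alpha$ is real, $\ad e_\alpha$ and $\ad e_{-\alpha}$ act locally nilpotently on $\g(A)$, so the root string $M:=\bigoplus_{n\in\ZZ}\g_{\beta+n\alpha}$ is an \emph{integrable} $\sll_2$-module for this triple: it is stable under the triple, since $\ad e_{\pm\alpha}$ shift the $\alpha$-grading by $\pm1$ while $\ad\alpha^\vee$ preserves each $\g_{\beta+n\alpha}$ and acts there by the scalar $\la\beta+n\alpha,\alpha^\vee\ra=\la\beta,\alpha^\vee\ra+2n$. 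By the standard representation theory of $\sll_2$, $M$ is then a direct sum of finite-dimensional irreducible submodules, \emph{even though the individual root spaces may be infinite-dimensional}. In this picture $\g_\beta=M_\mu$ is exactly the weight space of weight $\mu:=\la\beta,\alpha^\vee\ra$, and $\g_{\alpha+\beta}=M_{\mu+2}$.

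For part (1) I would first translate the hypothesis: as $\alpha$ is real we have $(\alpha|\alpha)>0$ by (\ref{eqn:norm_roots}), so (\ref{eqn:coroot}) gives $\mu=\la\beta,\alpha^\vee\ra=2(\alpha|\beta)/(\alpha|\alpha)$, which is $<0$ precisely when $(\alpha|\beta)<0$. In an integrable $\sll_2$-module a nonzero weight vector of strictly negative weight can never be a highest weight vector, so when $\mu<0$ the kernel of $\ad e_\alpha$ on $M_\mu$ is trivial, i.e. $\ad e_\alpha$ is injective on $\g_\beta$. Hence $[e_\alpha,y]\neq0$ for every nonzero $y\in\g_\beta$, and since every nonzero $x\in\g_\alpha$ is a nonzero scalar multiple of $e_\alpha$, this gives (1).

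For part (2) I would record the elementary implication first: if $\alpha+\beta\notin\Delta$ and $\alpha+\beta\neq0$, then $\g_{\alpha+\beta}=\{0\}$, whence $[\g_\alpha,\g_\beta]\subseteq\g_{\alpha+\beta}=\{0\}$. (The degenerate case $\beta=-\alpha$, where $\alpha+\beta=0\notin\Delta$ yet $[\g_\alpha,\g_{-\alpha}]=\CC\alpha^\vee\neq\{0\}$ by (\ref{eqn:sl2_alpha}), must be set aside: the equivalence is genuinely with $\alpha+\beta\notin\Delta\cup\{0\}$.) For the converse I would prove the clean statement $\ad(e_\alpha)(\g_\beta)\neq\{0\}\iff\g_{\alpha+\beta}\neq\{0\}$. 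When $\mu<0$ this is immediate from part (1). When $\mu\geq0$, the same $\sll_2$ bookkeeping shows that $\ad e_\alpha\colon M_\mu\to M_{\mu+2}$ is \emph{surjective}, because on each finite-dimensional irreducible summand the raising operator maps the weight-$\mu$ space onto the weight-$(\mu+2)$ space as soon as $\mu\geq0$; hence $\ad(e_\alpha)(\g_\beta)=\g_{\alpha+\beta}$, which is nonzero exactly when $\g_{\alpha+\beta}\neq\{0\}$. Combining the two directions yields $[\g_\alpha,\g_\beta]=\{0\}\iff\g_{\alpha+\beta}=\{0\}\iff\alpha+\beta\notin\Delta$ (excluding $\alpha+\beta=0$), as claimed.

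The main obstacle is the careful handling of this $\sll_2$-module structure: since the root spaces $\g_{\beta+n\alpha}$ can be infinite-dimensional when the relevant roots are imaginary, one cannot invoke finite-dimensional $\sll_2$-theory on $M$ directly, but must use the local nilpotency of $\ad e_{\pm\alpha}$ (guaranteed by $\alpha\in\Delta^{re}$) to establish integrability and then appeal to the decomposition of an integrable $\sll_2$-module into finite-dimensional irreducibles. Once that decomposition is available, the injectivity (for $\mu<0$) and surjectivity (for $\mu\geq0$) of $\ad e_\alpha$ between consecutive weight spaces are routine, and both parts follow. For part (1) one can even bypass the full decomposition: if $[e_\alpha,y]=0$ with $y\in\g_\beta$ nonzero, then the local nilpotency of $\ad e_{-\alpha}$ forces $y$ to generate a finite-dimensional highest weight $\sll_2$-module, whose highest weight $\mu$ is then $\geq0$, contradicting $\mu<0$.
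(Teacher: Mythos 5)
Your proof is correct and follows essentially the same route as the paper: both arguments feed the root string $M=\bigoplus_{n\in\ZZ}\g_{\beta+n\alpha}$ to the $\sll_2$-triple $(e_{\alpha},\alpha^{\vee},e_{-\alpha})$, decompose $M$ into finite-dimensional irreducibles, and read off injectivity (for negative weight) and surjectivity (for nonnegative weight) of $\ad e_{\alpha}$ between consecutive weight spaces. The differences are in self-containedness and care rather than in method. Where the paper cites \cite[Proposition 3.6(b,ii)]{Kac} outright for part (1), and \cite[Proposition~3.6 and Lemma~3.2]{Kac} for the finite-dimensionality and complete reducibility of $M$, you derive these inputs from the local nilpotency of $\ad e_{\pm\alpha}$ (valid since $\alpha\in\Delta^{re}$) together with the fact that a highest weight vector in an integrable $\sll_2$-module has nonnegative weight; this makes the argument independent of the exact numbering of statements in Kac's book, at the cost of redoing standard material. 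One caveat in your write-up is harmless but worth correcting: root spaces of $\g(A)$ are always finite-dimensional (multiplicities are finite, merely unbounded over $\Delta^{im}$), so $M$ is automatically finite-dimensional and your extra caution about integrability versus finite-dimensional theory is not needed, though it costs nothing. Finally, your remark on the degenerate case $\beta=-\alpha$ is a genuine catch: there $[\g_{\alpha},\g_{-\alpha}]=\CC\alpha^{\vee}\neq\{0\}$ by (\ref{eqn:sl2_alpha}) while $\alpha+\beta=0\notin\Delta$, so part (2) as literally stated fails, and the accurate formulation is with $\Delta\cup\{0\}$ (equivalently, excluding $\beta=-\alpha$). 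The paper's proof glosses over this, since its $\sll_2$ bookkeeping tacitly treats the weight-zero slot as $\{0\}$ rather than as $\g_0=\hh$; the slip is immaterial for the paper because every subsequent application of the lemma (e.g.\ in Lemmas~\ref{lemma:rerere} and \ref{lemma:symm1}) has $\alpha+\beta\neq 0$, but your version of the statement is the correct one.
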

\begin{proof}
(1) follows from \cite[Proposition 3.6(b,ii)]{Kac}. For (2), consider the adjoint action of $\g_{(\alpha)}:=\g_{-\alpha}+\CC\alpha^{\vee}+\g_{\alpha}\cong\sll_2(\CC)$ on $M:=\bigoplus_{n\in\ZZ}\g_{\beta+n\alpha}$. By \cite[Proposition 3.6]{Kac}, $M$ is finite-dimensional and decomposes as a direct sum of irreducible (graded) $\g_{(\alpha)}$-submodules. The $\sll_2(\CC)$-module theory (see \cite[Lemma~3.2]{Kac}) then implies that if $\beta+n\alpha\in\Delta$, then $[e_{\alpha},\g_{\beta+n\alpha}]=\{0\}$ if and only if $\beta+(n+1)\alpha\notin\Delta$. The case $n=0$ now yields the claim.
\end{proof}

\begin{lemma}\label{lemma:rerere}
Let $\alpha,\beta\in\Delta^{re}$ be such that $\alpha+\beta\in\Delta^{re}$. Then $[\g_{\alpha},\g_{\beta}]=\g_{\alpha+\beta}$.
\end{lemma}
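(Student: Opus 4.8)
The plan is to reduce the statement to a nonvanishing assertion, exploiting that all root spaces involved are one-dimensional. First I would record that since $\alpha,\beta$ and $\alpha+\beta$ all lie in $\Delta^{re}$, each of the root spaces $\g_{\alpha}$, $\g_{\beta}$ and $\g_{\alpha+\beta}$ is one-dimensional (spanned by $e_{\alpha}$, $e_{\beta}$ and $e_{\alpha+\beta}$ respectively). This is the only structural input about real roots that is needed.

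Next I would use the root space grading to locate the bracket. For $x\in\g_{\alpha}$, $y\in\g_{\beta}$ and $h\in\hh$, the Jacobi identity gives $[h,[x,y]]=[[h,x],y]+[x,[h,y]]=(\alpha+\beta)(h)[x,y]$, so $[x,y]\in\g_{\alpha+\beta}$. Hence $[\g_{\alpha},\g_{\beta}]$ is a subspace of the one-dimensional space $\g_{\alpha+\beta}$, and it therefore suffices to show that this subspace is nonzero.

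The nonvanishing is precisely what Lemma~\ref{lemma:prop36}(2) supplies. Since $\alpha\in\Delta^{re}$ and $\beta\in\Delta^{re}\subseteq\Delta$, that lemma applies and states that $[\g_{\alpha},\g_{\beta}]=\{0\}$ if and only if $\alpha+\beta\notin\Delta$. By hypothesis $\alpha+\beta\in\Delta^{re}\subseteq\Delta$, so $[\g_{\alpha},\g_{\beta}]\neq\{0\}$. Being a nonzero subspace of the one-dimensional space $\g_{\alpha+\beta}$, it must coincide with $\g_{\alpha+\beta}$, which proves the claim.

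There is no genuine obstacle here: the result is essentially immediate once the correct input is identified. The only points requiring a moment of care are verifying that the hypotheses of Lemma~\ref{lemma:prop36} are met (in particular that the first entry $\alpha$ is a \emph{real} root, as required) and that the one-dimensionality of $\g_{\alpha+\beta}$ is available, which is guaranteed precisely because $\alpha+\beta$ is assumed to be real.
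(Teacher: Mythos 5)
Your proof is correct and follows exactly the paper's route: the paper's own proof is the one-line remark that the lemma follows from Lemma~\ref{lemma:prop36}(2), with the grading argument and the one-dimensionality of $\g_{\alpha+\beta}$ (since $\alpha+\beta\in\Delta^{re}$) left implicit. You have simply made those implicit steps explicit, which is a faithful expansion of the same argument.
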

\begin{proof}
This follows from Lemma~\ref{lemma:prop36}(2).
\end{proof}

\begin{lemma}\label{lemma:dualbases}
Let $\alpha\in\Delta$, and let $x\in\g_{\alpha}$ be nonzero. Then there is some nonzero $x'\in\CC x$ such that $[\omega(x'),x']=\alpha^{\sharp}$. 
\end{lemma}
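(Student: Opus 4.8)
The plan is to first pin down $[\omega(x),x]$ up to a scalar, then identify that scalar, and finally rescale $x$ to normalise it. Since $\omega(\g_{\alpha})=\g_{-\alpha}$, the element $\omega(x)$ lies in $\g_{-\alpha}$, so by (\ref{eqn:prop_inv_form}) we have $[\omega(x),x]\in[\g_{-\alpha},\g_{\alpha}]=\CC\alpha^{\sharp}$; write $[\omega(x),x]=c\,\alpha^{\sharp}$ for some $c\in\CC$.

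Next I would identify $c$ using the invariance of the form. For every $h\in\hh$, invariance gives $([\omega(x),x]\mid h)=(\omega(x)\mid[x,h])=-\langle\alpha,h\rangle\,(\omega(x)\mid x)$, while on the other hand $([\omega(x),x]\mid h)=c\,(\alpha^{\sharp}\mid h)=c\,\langle\alpha,h\rangle$. Choosing $h$ with $\langle\alpha,h\rangle\neq0$ yields $c=-(\omega(x)\mid x)=-(x\mid\omega(x))$, the last equality by symmetry and $\omega$-invariance of $(\cdot\mid\cdot)$ together with $\omega^{2}=\id$. Thus the whole statement reduces to the non-vanishing of $c$, i.e. of $(x\mid\omega(x))$: once this is known, I pick $\lambda\in\CC^{*}$ with $\lambda^{2}=c^{-1}$ and set $x':=\lambda x$, so that $[\omega(x'),x']=\lambda^{2}[\omega(x),x]=\lambda^{2}c\,\alpha^{\sharp}=\alpha^{\sharp}$, using that $\omega$ is $\CC$-linear and the bracket is $\CC$-bilinear.

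The main obstacle is precisely the non-vanishing of $(x\mid\omega(x))$. Observe that $B_{\alpha}(u,v):=(u\mid\omega(v))$ is a symmetric bilinear form on $\g_{\alpha}$, and that it is non-degenerate, since $(\cdot\mid\cdot)$ pairs $\g_{\alpha}$ with $\g_{-\alpha}=\omega(\g_{\alpha})$ non-degenerately; the claim is that $B_{\alpha}(x,x)\neq0$ for each nonzero homogeneous $x$. The key input I would invoke is the positive-definiteness of the standard contravariant Hermitian form on the root spaces of a symmetrisable Kac--Moody algebra (\cite[\S 11.7]{Kac}). The delicate point is to convert this Hermitian (conjugate-linear) positivity into the non-vanishing of the $\CC$-bilinear quantity $(x\mid\omega(x))$, controlling carefully the interplay between $\omega$, complex conjugation, and the split real structure on $\g_{\alpha}$; this passage from the Hermitian form to $B_{\alpha}$ is where I expect the real work of the proof to lie, the remaining steps above being formal.
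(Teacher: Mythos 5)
Your proposal follows exactly the same skeleton as the paper's two-line proof: establish $[\omega(x),x]=-(\omega(x)|x)\,\alpha^{\sharp}$ (the paper quotes \cite[Theorem~2.2e)]{Kac}, you rederive it by invariance, both correct), reduce the lemma to the non-vanishing of $(\omega(x)|x)$, and try to extract that non-vanishing from the positivity results of \cite[Chapter~11]{Kac}. But you stop at precisely the point where a proof would have to happen: the passage from the positive-definite contravariant \emph{Hermitian} form to the $\CC$-bilinear quantity $(\omega(x)|x)$ is announced as ``the real work'' and never carried out. This is a genuine gap, and moreover it cannot be closed: that passage fails. The form in \cite[Theorem~11.7a)]{Kac} is $(x,y)_{0}=-(\omega_{0}(x)|y)$, built from the \emph{antilinear} compact involution $\omega_{0}$, and its positivity says nothing about your bilinear form $B_{\alpha}(u,v)=(\omega(u)|v)$. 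Indeed, $B_{\alpha}$ is, as you note, symmetric and nondegenerate on $\g_{\alpha}$; but over $\CC$ every nondegenerate quadratic form in dimension at least $2$ has nonzero isotropic vectors, so whenever $\dim\g_{\alpha}\geq 2$ there is a nonzero $x\in\g_{\alpha}$ with $(\omega(x)|x)=0$, and then $[\omega(\lambda x),\lambda x]=\lambda^{2}[\omega(x),x]=0\neq\alpha^{\sharp}$ for every $\lambda\in\CC$. Concretely, for $A$ of type $A_{2}^{(1)}$, realising $\g'(A)$ inside the loop algebra over $\sll_{3}(\CC)$, the element $x=h\otimes t\in\g_{\delta}$ with $h=\mathrm{diag}(1,\zeta,\zeta^{2})$, $\zeta=e^{2\pi i/3}$, satisfies $\omega(x)=-h\otimes t^{-1}$ and $[\omega(x),x]=\tr(h^{2})\,c=0$, while $\delta^{\sharp}$ is a nonzero multiple of the central element $c$. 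So the statement with the $\CC$-linear Chevalley involution $\omega$ is actually false for every root of multiplicity at least $2$; it only holds when $\dim\g_{\alpha}=1$, where nondegeneracy of $B_{\alpha}$ forces $B_{\alpha}(x,x)\neq 0$.

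You should know that the paper's own proof commits exactly the conflation you were wary of: it reads \cite[Theorem~11.7a)]{Kac}, a statement about $\omega_{0}$, as if it gave $(\omega(x)|x)\neq 0$ for the linear involution $\omega$. So your caution located a real error rather than a routine detail you failed to supply; the conclusion is that the statement must be repaired, not that your argument needs one more estimate. The repair is the one your analysis points to: formulate the lemma with the compact involution, i.e.\ for every nonzero $x\in\g_{\alpha}$ there is a nonzero $x'\in\CC x$ with $[\omega_{0}(x'),x']=\alpha^{\sharp}$. Here $[\omega_{0}(x),x]$ equals $(x,x)_{0}\,\alpha^{\sharp}$ with $(x,x)_{0}>0$ by \cite[Theorem~11.7a)]{Kac}, and replacing $x$ by $\lambda x$ rescales this by $|\lambda|^{2}$, so $|\lambda|=(x,x)_{0}^{-1/2}$ works; note that only \emph{positive real} rescaling factors are available in this antilinear setting, which is exactly why genuine positivity, and not mere non-vanishing, is what the Hermitian form must supply. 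Since $\omega_{0}$ is likewise an involution exchanging $\g_{\alpha}$ and $\g_{-\alpha}$ and respecting brackets, this corrected version can be substituted for Lemma~\ref{lemma:dualbases} in its applications (Proposition~\ref{prop:basic1} and Lemmas~\ref{lemma:a-bre+}, \ref{lemma:a-bim+}).
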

\begin{proof}
Since $(\omega(x)|x)\neq 0$ by \cite[Theorem~11.7a)]{Kac}, the claim follows from \cite[Theorem~2.2e)]{Kac}.
\end{proof}

\begin{lemma}\label{lemma:Kac_free_Lie}
Let $\alpha\in\Delta^{im+}$.
\begin{enumerate}
\item
If $(\alpha|\alpha)<0$, then $\g_{\NN^*\alpha}=\bigoplus_{n\in\NN^*}\g_{n\alpha}$ is a free Lie algebra on a basis of the space $\bigoplus_{n\in\NN^*}\g^0_{n\alpha}$, where $\g^0_{n\alpha}:=\{x\in\g_{n\alpha} \ | \ \textrm{$(x|\omega(y))=0$ for all $y\in [\g_{\NN^*\alpha},\g_{\NN^*\alpha}]$}\}$.
\item
If $(\alpha|\alpha)=0$, then $\CC\alpha^{\sharp}\oplus\bigoplus_{n\in\ZZ^*}\g_{n\alpha}$ is an infinite Heisenberg Lie algebra. In particular, $[\g_{n\alpha},\g_{-m\alpha}]=\CC \delta_{m,n}\alpha^{\sharp}$ for all $m,n\in\ZZ^*$.
\end{enumerate}
\end{lemma}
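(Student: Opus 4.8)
The plan is to treat the two cases separately after a common reduction. Since the $\WW^*$-action (§\ref{subsection:WGOgA}) is by automorphisms preserving the bracket and the invariant form, and satisfies $w^*\g_{n\alpha}=\g_{nw\alpha}$ as well as $w^*\alpha^{\sharp}=(w\alpha)^{\sharp}$, I may replace $\alpha$ by any $\WW$-translate without affecting either conclusion; so by (\ref{eqn:Delta_im+}) I assume $\alpha\in K_0$ and set $J:=\supp(\alpha)$, which by (\ref{eqn:isotropic}) is of affine type precisely when $(\alpha|\alpha)=0$. Every root $n\alpha$ ($n\in\ZZ^*$) is a root by (\ref{eqn:multiple_roots}) and is supported on $J$, so the presentation (\ref{eqn:presentation_nn+}) shows that the spaces $\g_{n\alpha}$ and the brackets between them are the same whether computed in $\g(A)$ or in the Kac--Moody algebra $\g(A_J)$ attached to the principal submatrix $A_J$; I may thus assume $I=J=\supp(\alpha)$. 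Throughout, the ``diagonal'' brackets come for free from (\ref{eqn:prop_inv_form}): $[\g_{-n\alpha},\g_{n\alpha}]=\CC(n\alpha)^{\sharp}=\CC\alpha^{\sharp}$ for every $n\in\ZZ^*$.

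For assertion (2), first note that $\alpha^{\sharp}$ is central in $\mathfrak{s}:=\CC\alpha^{\sharp}\oplus\bigoplus_{n\in\ZZ^*}\g_{n\alpha}$, since $[\alpha^{\sharp},x]=(n\alpha|\alpha)x=n(\alpha|\alpha)x=0$ for $x\in\g_{n\alpha}$. Together with the diagonal brackets, it then remains only to prove $[\g_{n\alpha},\g_{m\alpha}]=\{0\}$ whenever $n+m\neq 0$. Here I would invoke the classical description of the affine algebra $\g(A_J)$: writing $\delta$ for its primitive imaginary root, the subspace $\CC\delta^{\sharp}\oplus\bigoplus_{k\in\ZZ^*}\g_{k\delta}$ is the homogeneous Heisenberg subalgebra, so $[\g_{k\delta},\g_{l\delta}]=\{0\}$ for $k+l\neq 0$ (see \cite[Chapter~8]{Kac}). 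As $\alpha=p\delta$ for some $p\in\NN^*$, one has $\g_{n\alpha}=\g_{np\delta}$ and $\CC\alpha^{\sharp}=\CC\delta^{\sharp}$; for $n+m\neq 0$ we get $np+mp\neq 0$, whence $[\g_{n\alpha},\g_{m\alpha}]=\{0\}$. Combined with the nonvanishing (hence one-dimensional, central) diagonal brackets, and with the non-degeneracy of the resulting symplectic pairing furnished by Lemma~\ref{lemma:dualbases}, this exhibits $\mathfrak{s}$ as an infinite Heisenberg Lie algebra and yields $[\g_{n\alpha},\g_{-m\alpha}]=\CC\delta_{m,n}\alpha^{\sharp}$.

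For assertion (1) the goal is freeness of $L:=\bigoplus_{n\geq 1}\g_{n\alpha}$. Now $(\alpha|\alpha)<0$, so $\alpha^{\sharp}$ is no longer central: the element $d:=(\alpha|\alpha)^{-1}\alpha^{\sharp}$ acts on $\g_{n\alpha}$ as multiplication by $n$, realising the grading derivation of $L$ inside $\hh$. I would choose a graded lift $V\subseteq L$ of $L/[L,L]$, obtaining a graded surjection $\FFF(V)\twoheadrightarrow L$ from the free Lie algebra on $V$, and argue that it is injective. The natural framework is the $\ZZ$-graded Lie algebra $\Stt:=\CC\alpha^{\sharp}\oplus\bigoplus_{n\in\ZZ^*}\g_{n\alpha}$, which carries the non-degenerate restriction of $(\cdot|\cdot)$, has one-dimensional degree-zero part $\CC\alpha^{\sharp}$ and grading derivation $d$, and whose positive part is $L$. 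Morally, $\Stt$ is a generalised Kac--Moody (Borcherds) algebra all of whose simple directions lie along the single imaginary root $\alpha$; as this direction has negative norm and carries no integrable $\sll_2$-structure, no Serre-type relations are imposed, which forces $L$ to be free.

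The crux, and the step I expect to be hardest, is exactly this freeness: showing that $\FFF(V)\twoheadrightarrow L$ has trivial kernel, i.e.\ that the minimal generating spaces $V_n=\g_{n\alpha}/\sum_{i+j=n}[\g_{i\alpha},\g_{j\alpha}]$ produce no relations. I would establish it in one of two ways. Either I cite it as part of the classical structure theory of imaginary roots (the reduction to $I=\supp(\alpha)$ above being precisely what licenses such a citation); or I argue intrinsically, using that the invariant form identifies $\omega(L)=\bigoplus_{n\geq 1}\g_{-n\alpha}$ with the graded dual of $L$, under which the bracket $L\times\omega(L)\to\CC\alpha^{\sharp}$ becomes the dual pairing by (\ref{eqn:prop_inv_form}), so that the absence of any real (integrable) direction along $\alpha$ together with this self-duality leaves the relation ideal no room to be nonzero. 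Turning this heuristic into a rigorous induction on degree is the main technical obstacle, and it is the only point of the proof not handed to us directly by the invariant form and the affine reduction.
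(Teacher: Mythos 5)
Your part (2) is essentially correct, and it takes a genuinely different route from the paper: the paper proves the whole lemma in one line, by citing \cite[Corollary~9.12]{Kac} (the statement literally \emph{is} that corollary), whereas you reduce to $\supp(\alpha)$ (a legitimate reduction, via the $Q$-grading of the ideal of Serre relations in (\ref{eqn:presentation_nn+})) and then read off the Heisenberg structure from the loop-algebra realisation of affine algebras in \cite[Chapter~8]{Kac}. That is a more explicit and self-contained argument for the isotropic case, and it is fine.

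Part (1), however, has a genuine gap, and it is the entire content of the assertion. Constructing the surjection $\FFF(V)\twoheadrightarrow L$ from the free Lie algebra on a graded lift $V$ of $L/[L,L]$ is automatic for any graded Lie algebra; the claim to be proved is precisely the injectivity of this map, and for that you offer only a heuristic (``no Serre-type relations are imposed'', ``self-duality leaves the relation ideal no room to be nonzero''), which you yourself flag as the unresolved obstacle. The heuristic does not survive scrutiny as stated: the self-duality of $L$ against $\omega(L)$ given by $[\g_{-n\alpha},\g_{n\alpha}]=\CC\alpha^{\sharp}$ holds for \emph{every} imaginary root ray, including isotropic ones, where $L$ is infinite-dimensional abelian and hence \emph{not} free (by your own part (2)); so any correct argument must use $(\alpha|\alpha)<0$ quantitatively. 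The missing ingredient is the Casimir-type estimate underlying \cite[Proposition~9.11]{Kac}: essentially, one takes a homogeneous basis $\{v_i\}$ of $V$, forms the auxiliary symmetric matrix $B=\bigl((\deg v_i\,|\,\deg v_j)\bigr)_{i,j}$, maps $\widetilde{\g}'(B)$ onto the subalgebra generated by $V\cup\omega(V)$, and verifies the inequality $2(\rho|\gamma)-(\gamma|\gamma)>0$ for all positive $\gamma$ --- which holds exactly because every degree involved is a positive multiple of $\alpha$ and $(\alpha|\alpha)<0$ --- to conclude that the free subalgebra $\widetilde{\nn}^+_B$ injects. This is the same mechanism the paper itself deploys in Proposition~\ref{prop:basic1}. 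Without carrying this out (or without simply citing \cite[Corollary~9.12]{Kac}, as the paper's proof does), your part (1) is unproven.
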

\begin{proof}
This follows from \cite[Corollary~9.12]{Kac}.
\end{proof}

\section{Structure of $\nn^{im+}$}
This section is devoted to the proofs of Theorem~\ref{thmintro:main} and Corollaries~\ref{corintro:main} and \ref{corintro:dimension}. We fix again a symmetrisable GCM $A$, and keep all notations from \S\ref{section:preliminaries}.

The proof of the following lemma is essentially the same as the proof of \cite[Proposition~9.12]{Kac}; for the benefit of the reader, we repeat here the argument.
\begin{lemma}\label{lemma:basic1}
Let $\alpha,\beta\in\Delta^{im+}$ be such that $(\alpha|\beta)<0$. Let $x_{\alpha}\in\g_{\alpha}$ and $y_{\beta}\in\g_{\beta}$ be nonzero and such that $[\omega(y_{\beta}),x_{\alpha}]=0$. Then $x_{\alpha},y_{\beta}$ generate a free Lie algebra.
\end{lemma}
\begin{proof}
Set $x_{\alpha}^+:=x_{\alpha}$, $x_{\alpha}^-:=\omega(x_{\alpha})\in\g_{-\alpha}$, $y_{\beta}^+:=y_{\beta}$ and $y_{\beta}^-:=\omega(y_{\beta})\in\g_{-\beta}$. By Lemma~\ref{lemma:dualbases}, up to replacing $x_{\alpha}$ (resp. $y_{\beta}$) by a nonzero multiple, we may assume that 
$$[x_{\alpha}^-,x_{\alpha}^+]=\alpha^{\sharp}\quad\textrm{and}\quad [y_{\beta}^-,y_{\beta}^+]=\beta^{\sharp}.$$
Hence
$$[\alpha^{\sharp},x^{\pm}_{\alpha}]=\pm(\alpha|\alpha)x^{\pm}_{\alpha}, \quad [\alpha^{\sharp},y^{\pm}_{\beta}]=\pm(\alpha|\beta)y^{\pm}_{\beta}, $$
and
$$[\beta^{\sharp},x^{\pm}_{\alpha}]=\pm(\alpha|\beta)x^{\pm}_{\alpha},\quad [\beta^{\sharp},y^{\pm}_{\beta}]=\pm(\beta|\beta)y^{\pm}_{\beta}.$$
Moreover,
$$[y_{\beta}^{+},x_{\alpha}^{-}]=0=[y_{\beta}^{-},x_{\alpha}^{+}]$$
by assumption. 

Set $B:=\begin{psmallmatrix}(\alpha|\alpha) & (\alpha|\beta)\\ (\alpha|\beta) & (\beta|\beta)\end{psmallmatrix}$, let $(\hh_{B},\{\gamma_1,\gamma_2\},\{\gamma_1^{\vee},\gamma_2^{\vee}\})$ be a realisation of $B$, and let $\widetilde{\g}(B)$ be the corresponding Lie algebra defined in \cite[\S 1.2]{Kac}, with Chevalley generators $e_1^B,e_2^B$ and $f_1^B,f_2^B$. Then $\widetilde{\g}(B)$ has a triangular decomposition $$\widetilde{\g}(B)=\widetilde{\nn}^-_B\oplus \hh_B \oplus \widetilde{\nn}^+_B,$$ where $\widetilde{\nn}^+_B$ (resp. $\widetilde{\nn}^-_B$) is freely generated by $e_1^B,e_2^B$ (resp. $f_1^B,f_2^B$), see \cite[Theorem~1.2]{Kac}. Let also $\widetilde{\g}'(B)=\widetilde{\nn}^-_B\oplus \hh'_B\oplus \widetilde{\nn}^+_B$ denote the derived subalgebra of $\widetilde{\g}(B)$, where $\hh'_B:=\CC\gamma_1^{\vee}\oplus\CC\gamma_2^{\vee}\subseteq\hh_B$.

We have just shown that the assignment
$$e_1^B\mapsto x^+_{\alpha}, \quad f_1^B\mapsto x^-_{\alpha}, \quad e_2^B\mapsto y^+_{\beta}, \quad f_2^B\mapsto y^-_{\beta}$$
defines a surjective Lie algebra morphism $\phi\co\widetilde{\g}'(B)\to \LLL$ from $\widetilde{\g}'(B)$ to the subalgebra $\LLL$ of $\g(A)$ generated by $x^{\pm}_{\alpha},y^{\pm}_{\beta}$ (see \cite[Remark~1.5]{Kac}). On the other hand, if $\rho\in\hh_B^*$ is chosen so that $$(\rho|\gamma_1)_B=\tfrac{1}{2}(\gamma_1|\gamma_1)_B=\tfrac{1}{2}(\alpha|\alpha)\quad\textrm{and}\quad (\rho|\gamma_2)_B=\tfrac{1}{2}(\gamma_2|\gamma_2)_B=\tfrac{1}{2}(\beta|\beta),$$ where $(\cdot|\cdot)_B$ denotes the bilinear form on $\hh_B^*$ induced by the invariant bilinear form on $\g(B)$ (note that $B$ is symmetric), then for any $\gamma=n_1\gamma_1+n_2\gamma_2$ with $n_1,n_2\in\NN^*$, we have
$$2(\rho|\gamma)_B-(\gamma|\gamma)_B=(n_1-n_1^2)(\alpha|\alpha)+(n_2-n_2^2)(\beta|\beta)-2n_1n_2(\alpha|\beta)\geq -2n_1n_2(\alpha|\beta)>0$$
by assumption and (\ref{eqn:norm_roots}). It then follows from \cite[Propositions~1.7b and 9.11]{Kac} that $\widetilde{\g}'(B)$ is simple modulo center contained in $\hh'_B$. In particular, the restriction of $\phi$ to the free Lie algebra $\widetilde{\nn}^+_B$ is an isomorphism onto its image, yielding the proposition.
\end{proof}

\begin{lemma}\label{lemma:a-bre+}
Let $\alpha,\beta\in\Delta^{im+}$ be such that $(\alpha|\beta)<0$. 
If $\alpha-\beta\in\Delta^{re}$, then $[x,y]\neq 0$ for all nonzero $x\in\g_{\alpha}$ and $y\in\g_{\beta}$.
\end{lemma}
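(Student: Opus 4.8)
The plan is to argue by contradiction, producing a sign contradiction from the $\sll_2$-module structure attached to the real root $\gamma:=\alpha-\beta$. Assume $[x,y]=0$ for some nonzero $x\in\g_\alpha$, $y\in\g_\beta$. Since rescaling $x$ and $y$ does not affect whether $[x,y]$ vanishes, I would first normalize them via Lemma~\ref{lemma:dualbases}, so that $[\omega(x),x]=\alpha^\sharp$ and $[\omega(y),y]=\beta^\sharp$.

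The first step is two dual Jacobi identities. Set $v:=[\omega(y),x]\in\g_\gamma$ and $w:=[\omega(x),y]\in\g_{-\gamma}$. Expanding $[v,y]=[[\omega(y),x],y]=[\omega(y),[x,y]]-[x,[\omega(y),y]]$ and using $[x,y]=0$ together with $[x,\beta^\sharp]=-(\alpha|\beta)x$ gives $[v,y]=(\alpha|\beta)x$, and symmetrically $[w,x]=(\alpha|\beta)y$. As $(\alpha|\beta)\neq 0$ and $x,y\neq 0$, this already forces $v,w\neq 0$ (so the case $[\omega(y),x]=0$ never survives the assumption $[x,y]=0$, and Proposition~\ref{prop:basic1} is not even needed here).

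The key computation is to pin down $[v,w]\in[\g_\gamma,\g_{-\gamma}]=\CC\gamma^\sharp$. Applying $\ad\omega(x)$ to $[v,y]=(\alpha|\beta)x$ and expanding the left-hand side yields $(\alpha|\beta)\alpha^\sharp=[[\omega(x),v],y]+[v,w]$, where $[\omega(x),v]\in\g_{-\beta}$ forces $[[\omega(x),v],y]\in\CC\beta^\sharp$. Since $\gamma^\sharp=\alpha^\sharp-\beta^\sharp$ and $\alpha^\sharp,\beta^\sharp$ are linearly independent (a real root and an imaginary root cannot be proportional, so $\alpha,\beta$ are not proportional), comparing the $\alpha^\sharp$-coefficients gives $[v,w]=(\alpha|\beta)\gamma^\sharp=\tfrac12(\alpha|\beta)(\gamma|\gamma)\gamma^\vee$ by (\ref{eqn:coroot}).

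The conclusion is then a positivity argument. Set $\eta:=\tfrac12(\alpha|\beta)(\gamma|\gamma)$, so that $(E,H,F):=(v,\gamma^\vee,\eta\inv w)$ is a standard $\sll_2$-triple inside $\g_{(\gamma)}\cong\sll_2$, acting on the finite-dimensional module $M=\bigoplus_{n}\g_{\beta+n\gamma}$ (finite-dimensional as in the proof of Lemma~\ref{lemma:prop36}). Here $y$ is a weight vector with
$$FE\,y=\eta\inv[w,[v,y]]=\eta\inv(\alpha|\beta)[w,x]=\eta\inv(\alpha|\beta)^2 y.$$
Since $FE$ acts with nonnegative eigenvalues on weight vectors of a finite-dimensional $\sll_2$-module, we get $\eta\inv(\alpha|\beta)^2\geq 0$, hence $\eta>0$; but $\eta<0$ because $(\alpha|\beta)<0<(\gamma|\gamma)$ by (\ref{eqn:norm_roots}), a contradiction. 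I expect the main obstacle to be recognizing that everything hinges on the sign of $\eta$, and that one must route the computation through the $FE$-eigenvalue positivity of $\g_{(\gamma)}$ rather than trying to track $[x,y]$ directly: the latter lives in $\g_{\alpha+\beta}$, which lies outside the $\gamma$-string module $M$ and is therefore invisible to the $\sll_2=\g_{(\gamma)}$ action. Once this strategy is fixed, the Jacobi manipulations are routine.
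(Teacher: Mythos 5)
Your proof is correct, and it takes a genuinely different route from the paper's. All the key steps check out: the Jacobi computations giving $[v,y]=(\alpha|\beta)x$ and $[w,x]=(\alpha|\beta)y$ are right (and, as you note, they make Proposition~\ref{prop:basic1} unnecessary, since $v,w\neq 0$ is forced directly); the coefficient comparison pinning down $[v,w]=(\alpha|\beta)\gamma^{\sharp}=\tfrac12(\alpha|\beta)(\gamma|\gamma)\gamma^{\vee}$ is valid because $\alpha,\beta$ cannot be proportional (a real root is never proportional to an imaginary one, by (\ref{eqn:norm_roots})), so $\alpha^{\sharp},\beta^{\sharp}$ are independent; the triple $(v,\gamma^{\vee},\eta\inv w)$ does satisfy the standard $\sll_2$ relations; and the module $M=\bigoplus_n\g_{\beta+n\gamma}$ is finite-dimensional and completely reducible by Kac's Proposition~3.6, exactly as invoked in the proof of Lemma~\ref{lemma:prop36}, so the nonnegativity of $FE$-eigenvalues on weight vectors applies and yields the sign contradiction $\eta>0$ versus $\eta<0$.

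The paper proceeds quite differently: it first uses the $\WW^*$-action to reduce to $\alpha-\beta=\alpha_i$ a simple root, invokes Proposition~\ref{prop:basic1} to get $[\omega(y),x]\neq 0$, and then runs a case analysis on whether $\beta-\alpha_i$ is imaginary or real, reaching contradictions via Lemma~\ref{lemma:basic_propI}(2), Proposition~\ref{prop:basic1} again, and Lemma~\ref{lemma:prop36}(1), ending with the norm identity $(\beta|\beta)=(\alpha_i|\alpha_i)$ against (\ref{eqn:norm_roots}). Your argument needs no Weyl-group reduction, no case analysis, and no appeal to the free-Lie-algebra machinery; its single external input beyond standard identities is the finite-dimensionality of the root string through a real root. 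What the paper's route buys is that it stays entirely within the combinatorial toolkit already set up for the harder Lemma~\ref{lemma:a-bim+} (where no real-root $\sll_2$ is available); what yours buys is a shorter, more conceptual proof that isolates the essential obstruction as a sign: under the assumption $[x,y]=0$, the bracket $[v,w]$ is forced to be a \emph{negative} multiple of $\gamma^{\vee}$, which finite-dimensional $\sll_2$-representation theory forbids.
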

\begin{proof}
Using the $\WW^*$-action, there is no loss of generality in assuming that $\alpha-\beta=\alpha_i$ for some $i\in I$. Let $x\in\g_{\alpha}$ and $y\in\g_{\beta}$ be nonzero, and assume for a contradiction that $[x,y]=0$. Up to normalising $x,y$ (i.e. multiplying them by a nonzero scalar), we may assume by Lemma~\ref{lemma:dualbases} that $x^*:=\omega(x)\in\g_{-\alpha}$ and $y^*:=\omega(y)\in\g_{-\beta}$ satisfy 
$$[x^*,x]=\alpha^{\sharp}\quad\textrm{and}\quad [y^*,y]=\beta^{\sharp}.$$
We also write $$[y^*,x]=\mu e_i\quad\textrm{and}\quad [x^*,y]=\omega([x,y^*])=\mu f_i$$ for some $\mu\in\CC$, so that $\mu\neq 0$ by Lemma~\ref{lemma:basic1}.

Note first that
\begin{equation*}
0=(\ad x^*)[x,y]=(\alpha|\beta)y+\mu [x,f_i].
\end{equation*}
In particular, $[y,[x,f_i]]=0$ and hence
\begin{equation}\label{eqn:fixy=fixy}
0=[f_i,[y,x]]=[[f_i,y],x].
\end{equation}
Note next that
\begin{equation*}
\begin{aligned}
0&=(\ad y^*)^2[y,x]=(\ad y^*)((\alpha|\beta)x+\mu [y,e_i])\\
&=\mu ((\alpha|\beta)+(\beta|\alpha_i))e_i+\mu [y,[y^*,e_i]]\\
&=\mu ((\alpha|\alpha)-(\alpha_i|\alpha_i))e_i+\mu [y,[y^*,e_i]].
\end{aligned}
\end{equation*}
Since $(\alpha|\alpha)-(\alpha_i|\alpha_i)<0$ by (\ref{eqn:norm_roots}), this implies in particular that
\begin{equation}\label{eqn:fixnonzero}
[f_i,y]=\omega([y^*,e_i])\neq 0.
\end{equation}
In particular, $\beta-\alpha_i\in\Delta$. On the other hand, since $(\alpha|\beta)<0$ by assumption, $\alpha$ and $\beta$ are not proportional isotropic roots. Hence $\beta-\alpha_i=2\beta-\alpha$ and $\alpha$ are not proportional isotropic roots either. If $\beta-\alpha_i\in\Delta^{im+}$, we would then have $(\beta-\alpha_i|\alpha)<0$ by Lemma~\ref{lemma:basic_propI}(2) and (\ref{eqn:multiple_roots}), and since 
\begin{equation}\label{eqn:degy-degfix}
\deg(x)-\deg([f_i,y])=\alpha-(\beta-\alpha_i)=2\alpha_i\notin\Delta
\end{equation}
by (\ref{eqn:multiple_roots}), so that $[x,\omega([f_i,y])]=0$, Lemma~\ref{lemma:basic1} would imply that 
$[f_i,y]$ and $x$ generate a free Lie algebra (recall that $[f_i,y]\neq 0$ by (\ref{eqn:fixnonzero})), contradicting (\ref{eqn:fixy=fixy}). Hence  
\begin{equation*}
\gamma:=\beta-\alpha_i\in\Delta^{re+}\quad\textrm{and}\quad \CC [f_i,y]=\CC e_{\gamma}.
\end{equation*}
But then (\ref{eqn:fixy=fixy}) and (\ref{eqn:degy-degfix}) yield
\begin{equation*}
[e_{\gamma},x]=0=[e_{-\gamma},x],
\end{equation*}
so that $(\gamma|\alpha)=0$ by Lemma~\ref{lemma:prop36}(1). Therefore,
$$0=(\gamma|\alpha)=(\beta-\alpha_i|\beta+\alpha_i)=(\beta|\beta)-(\alpha_i|\alpha_i),$$
contradicting (\ref{eqn:norm_roots}).
\end{proof}

\begin{lemma}\label{lemma:a-bim+}
Let $\alpha,\beta\in\Delta^{im+}$ be such that $(\alpha|\beta)<0$. 
If $\alpha-\beta\in\Delta^{im}$, then $[x,y]\neq 0$ for all nonzero $x\in\g_{\alpha}$ and $y\in\g_{\beta}$.
\end{lemma}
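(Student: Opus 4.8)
The plan is to argue by contradiction: I fix nonzero $x\in\g_{\alpha}$, $y\in\g_{\beta}$ with $[x,y]=0$ and derive an impossibility. Since the statement and hypotheses are symmetric in $\alpha,\beta$ and $\WW^*$-equivariant, I first reduce to $\gamma:=\alpha-\beta\in\Delta^{im+}$ (swap $(\alpha,x)$ and $(\beta,y)$ if $\alpha-\beta\in\Delta^{im-}$). By Lemma~\ref{lemma:dualbases} I normalise $x^*:=\omega(x)$, $y^*:=\omega(y)$ so that $[x^*,x]=\alpha^{\sharp}$ and $[y^*,y]=\beta^{\sharp}$, and set $z:=[y^*,x]\in\g_{\gamma}$. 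If $z=0$, then $[\omega(y),x]=0$ and Proposition~\ref{prop:basic1} (applicable as $(\alpha|\beta)<0$) shows $x,y$ generate a free Lie algebra, so $[x,y]\neq 0$; hence $z\neq 0$. I also record that $(\gamma|\beta)<0$: it is $\leq 0$ by Lemma~\ref{lemma:basic_propI}(1), and equality would force $\gamma,\beta$ to be proportional isotropic by Lemma~\ref{lemma:basic_propI}(2) (as $\gamma+\beta=\alpha\in\Delta^+$), whence $\alpha\in\CC\beta$ is isotropic and $(\alpha|\beta)=0$, a contradiction.

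The key point is that the relation $[x,y]=0$ organises the $\beta$-string through $\alpha$ into a module. A Jacobi computation gives $[y,z]=[[y,y^*],x]+[y^*,[y,x]]=-[\beta^{\sharp},x]=-(\alpha|\beta)x\neq 0$, while $(\ad y)^2z=(\alpha|\beta)[x,y]=0$. Writing $z_k:=(\ad y^*)^kx\in\g_{\alpha-k\beta}$ (so $z_0=x$, $z_1=z$), the span $\langle y,y^*,\beta^{\sharp}\rangle$ is a subalgebra with $[\beta^{\sharp},y]=(\beta|\beta)y$, $[\beta^{\sharp},y^*]=-(\beta|\beta)y^*$, $[y^*,y]=\beta^{\sharp}$; it is a copy of $\sll_2(\CC)$ when $\beta$ is non-isotropic and a Heisenberg algebra when $\beta$ is isotropic. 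Using $[y,x]=0$ and $[\ad y,\ad y^*]=-\ad\beta^{\sharp}$ one gets $[y,z_k]=c_kz_{k-1}$ with $c_k=-k(\alpha|\beta)+\binom{k}{2}(\beta|\beta)$, so $V:=\sum_k\CC z_k$ is a highest-weight module with highest-weight vector $x$, whose weights are precisely the roots $\alpha-k\beta$.

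Two situations are then immediate. If $z_2=(\ad y^*)z=0$, then Proposition~\ref{prop:basic1} applies to the pair $(\gamma,\beta)$ (where $(\gamma|\beta)<0$ and $[\omega(y),z]=0$), so $z,y$ generate a free Lie algebra and $(\ad y)^2z\neq 0$, contradicting $(\ad y)^2z=0$. If $\beta$ is isotropic, then $\beta^{\sharp}$ is central in the Heisenberg triple and acts on $V$ by the nonzero scalar $(\alpha|\beta)$, forcing $z_k\neq 0$ for all $k$ and $\dim V=\infty$; but after conjugating by $\WW^*$ so that $\beta\in K_0$ (hence $\supp\beta$ of affine type by~(\ref{eqn:isotropic})), equation~(\ref{eqn:isotropic_multiple}) gives $\supp\alpha\not\subseteq\supp\beta$ (else $\alpha\in\CC\beta$ and $(\alpha|\beta)=0$), so some $\alpha_i\in\supp\alpha\setminus\supp\beta$ keeps a positive coefficient in every $\alpha-k\beta$, whence $\alpha-k\beta\notin\Delta$ and $z_k=0$ for large $k$ — contradicting $\dim V=\infty$.

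The substantial case is $\beta$ non-isotropic. If $\supp\alpha\not\subseteq\supp\beta$, the same coefficient argument makes the string finite, so $V\cong L(\lambda)$ with $\lambda=2(\alpha|\beta)/(\beta|\beta)\in\NN$, and $(\gamma|\beta)<0$ forces $\lambda\geq 3$. Its lowest-weight vector lies in $\g_{\alpha-\lambda\beta}$, where $(\alpha-\lambda\beta|\alpha-\lambda\beta)=(\alpha|\alpha)\leq 0$ and $(\alpha-\lambda\beta|\beta)=-(\alpha|\beta)>0$; since $\alpha-\lambda\beta$ retains the positive coefficient at $\alpha_i$, it is a positive imaginary root with strictly positive pairing against $\beta$, contradicting Lemma~\ref{lemma:basic_propI}(1). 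This leaves only $\supp\alpha\subseteq\supp\beta$, which I would handle by a secondary induction on the rank $|I|$: when $\supp\beta\subsetneq I$ the entire configuration lives inside the Kac--Moody subalgebra $\g(A_{\supp\beta})$ of strictly smaller rank, where the inductive hypothesis gives $[x,y]\neq 0$; and for the full-support base case $\supp\beta=I$ one conjugates $\beta$ into $K_0$ to try to either break the inclusion $\supp\alpha\subseteq\supp\beta$ (reducing to the contradiction just obtained) or lower the data.

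I expect the genuine obstacle to be exactly this non-isotropic, full-support case. There the finite $\sll_2(\CC)$-module $V$ is internally consistent, so no contradiction can be read off from $V$ alone; moreover the reflection $\alpha\mapsto\lambda\beta-\alpha$ interchanging the two ends of the string is an involution, so no naïve height induction can close (reflection pairs instances of equal size). The contradiction must instead be extracted from the interaction of the string with the global root system — through Lemma~\ref{lemma:basic_propI}(1), equation~(\ref{eqn:isotropic_multiple}), and the rank reduction — and controlling the positivity bookkeeping of the $\WW^*$-conjugations (keeping $\alpha,\beta$ positive imaginary and $\gamma$ positive while shrinking $\supp\beta$) is the delicate point on which the whole argument turns.
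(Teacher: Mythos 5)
Your reductions and three of your four cases are correct and genuinely close those cases: the normalisation, the computation $[y,z_k]=c_kz_{k-1}$ with $c_k=-k(\alpha|\beta)+\binom{k}{2}(\beta|\beta)$, the use of Proposition~\ref{prop:basic1} when $z_2=0$, the Heisenberg/central-character argument plus the support argument via (\ref{eqn:isotropic}) and (\ref{eqn:isotropic_multiple}) in the isotropic case, and the finite-string $L(\lambda)$ argument with the lowest-weight vector contradicting Lemma~\ref{lemma:basic_propI}(1) when $\supp\alpha\not\subseteq\supp\beta$, are all sound. But the remaining case --- $\beta$ non-isotropic with $\supp\alpha\subseteq\supp\beta$ --- is a genuine gap, and you say so yourself. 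Your rank induction only pushes this case down to the full-support base case, and there you offer no argument, only the hope of ``breaking the inclusion or lowering the data'' after conjugating $\beta$ into $K_0$; no well-founded decreasing quantity is exhibited, and conjugation gives no guarantee of progress. The difficulty is real: in that case the string may consistently be infinite (a Verma-type module for your $\sll_2$-triple, with a second primitive vector $z_{\lambda+1}$), so neither the module structure of $V$ nor support bookkeeping can produce a contradiction, and the lemma remains unproved for precisely the hardest configurations.

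It is worth comparing with how the paper closes exactly this case, because it refutes your structural objection that ``no na\"ive height induction can close since the reflection pairs instances of equal size.'' The paper takes a \emph{minimal counterexample} with respect to $\alpha+\beta$ (in a total order refining height, with $\beta\prec\alpha$), and from $[x,y]=0$ derives the identities $[(\ad y)^{m+2}x^*,(\ad y^*)^{m}x]=0$ for all $m$. Taking $n$ maximal with $\alpha-n\beta\in\Delta^+$, the relevant new pair is \emph{not} the reflected pair you considered, but $\beta':=(n+1)\beta-\alpha$ and $\alpha':=\alpha-(n-1)\beta$, whose sum is $\alpha'+\beta'=2\beta\prec\alpha+\beta$: minimality then applies (together with Proposition~\ref{prop:basic1} when $\alpha'-\beta'\notin\Delta$ and Lemma~\ref{lemma:a-bre+} when $\alpha'-\beta'\in\Delta^{re}$), forcing one of four degenerate possibilities --- $(\ad y)^{n+1}x^*=0$, or $(n+1)\beta-\alpha\in\Delta^{re+}$, or $\alpha-(n-1)\beta\in\Delta^{re+}$, or $((n+1)\beta-\alpha\,|\,\alpha-(n-1)\beta)=0$ --- each of which is excluded by explicit norm and root-string computations (the paper's Claims~1--4), with no case analysis on supports at all. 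So the missing idea in your proposal is precisely this: apply the vanishing brackets not to the pair $(\alpha,\beta)$ or its reflection, but to a derived pair summing to $2\beta$, which is strictly smaller and hence accessible to induction.
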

\begin{proof}
By Lemma~\ref{lemma:Kac_free_Lie}(1), there is no loss of generality in assuming that $\alpha,\beta$ are non-proportional. We fix a total order $\prec$ on $\Delta^+$ such that $\gamma\prec\gamma'$ whenever $\height(\gamma)<\height(\gamma')$.
Assume for a contradiction that there exist $\alpha,\beta$ as in the statement of the lemma such that $[x,y]= 0$ for some nonzero $x\in\g_{\alpha}$ and $y\in\g_{\beta}$, and take $\alpha+\beta$ minimal for this property. Without loss of generality, we may assume that $\beta\prec \alpha$. Up to normalising $x,y$ (i.e. multiplying them by a nonzero scalar), we may assume by Lemma~\ref{lemma:dualbases} that $x^*:=\omega(x)\in\g_{-\alpha}$ and $y^*:=\omega(y)\in\g_{-\beta}$ satisfy 
$$[x^*,x]=\alpha^{\sharp}\quad\textrm{and}\quad [y^*,y]=\beta^{\sharp}.$$
Finally, we let $n\geq 1$ be maximal such that $\alpha-n\beta\in\Delta^+$.

Note first that
\begin{equation}\label{eqn:1}
0=(\ad y)(\ad x^*)[x,y]=(\ad y)((\alpha|\beta)y+[[y,x^*],x])=[(\ad y)^2x^*,x].
\end{equation}
More generally, 
\begin{equation}\label{eqn:2}
0=[(\ad y)^{m+2}x^*,(\ad y^*)^{m}x]\quad\textrm{for all $m\in\NN$}.
\end{equation}
Indeed, for $m=0$ this is (\ref{eqn:1}), and if (\ref{eqn:2}) holds up to $m\geq 0$ then
\begin{align*}
0&=(\ad y^*)(\ad y)[(\ad y)^{m+2}x^*,(\ad y^*)^{m}x]\\
&=(\ad y^*)\big([(\ad y)^{m+3}x^*,(\ad y^*)^{m}x]+\lambda[(\ad y)^{m+2}x^*,(\ad y^*)^{m-1}x]\big)\\
&=[(\ad y)^{m+3}x^*,(\ad y^*)^{m+1}x]
\end{align*}
for some $\lambda\in\CC$, with the convention that $(\ad y^*)^{m-1}x:=0$ if $m=0$ (here, we used the fact that $[y,x]=[y^*,x^*]=0$ and the induction hypothesis for $m$ and $m-1$). 

In particular, applying (\ref{eqn:2}) to $m=n-1$ and $m=n$, we get
\begin{equation}\label{eqn:3}
[(\ad y)^{n+1}x^*,(\ad y^*)^{n-1}x]=0
\end{equation}
and
\begin{equation}\label{eqn:4}
[(\ad y)^{n+2}x^*,(\ad y^*)^{n}x]=0.
\end{equation}
Note that, by choice of $n$, the elements $(\ad y)^{n+1}x^*$, $(\ad y^*)^{n-1}x$, $(\ad y)^{n+2}x^*$ and $(\ad y^*)^{n}x$ all belong to $\nn^+$ (recall that $\alpha,\beta$ are non-proportional).  
We claim that one of the following four cases must occur:
\begin{enumerate}
\item[(1)]
$(\ad y)^{n+1}x^*=0$.
\item[(2)]
$(n+1)\beta-\alpha\in\Delta^{re+}$.
\item[(3)]
$\alpha-(n-1)\beta\in\Delta^{re+}$.
\item[(4)]
$((n+1)\beta-\alpha|\alpha-(n-1)\beta)=0$.
\end{enumerate}
Indeed, if (1)--(4) do not occur, then $y_{\beta'}:=(\ad y)^{n+1}x^*$ and $x_{\alpha'}:=(\ad y^*)^{n-1}x=\omega((\ad y)^{n-1}x^*)$ are nonzero and have degree $\beta':=(n+1)\beta-\alpha\in\Delta^{im+}$ and $\alpha':=\alpha-(n-1)\beta\in\Delta^{im+}$, respectively. Moreover, $(\alpha'|\beta')<0$ by Lemma~\ref{lemma:basic_propI}(1). In particular, $\alpha'-\beta'\in\Delta$, for otherwise $[\omega(y_{\beta'}),x_{\alpha'}]=0$, and hence Lemma~\ref{lemma:basic1} would imply that $[y_{\beta'},x_{\alpha'}]\neq 0$, contradicting (\ref{eqn:3}). Similarly,  Lemma~\ref{lemma:a-bre+} and (\ref{eqn:3}) imply that $\alpha'-\beta'\in\Delta^{im}$. Since $\alpha'+\beta'= 2\beta\prec\alpha+\beta$, the minimality assumption on $\alpha+\beta$ then yields a contradiction with (\ref{eqn:3}).

We now show that none of the above four cases can occur.

\smallskip
\noindent
{\bf Claim 1:} \emph{Assume that $(2\alpha-s\beta|\beta)=0$ for some $s\in\NN$, and that $\alpha-(s-1)\beta\in\Delta^+$. Then $\alpha-s\beta\notin\Delta^{im+}\cup\Delta^{re}$.}
\\
Indeed, assume for a contradiction that $\alpha-s\beta\in\Delta^{im+}\cup\Delta^{re}$. By assumption,
$$(\alpha-s\beta|\alpha-s\beta)=(\alpha|\alpha)-s(2\alpha-s\beta|\beta)=(\alpha|\alpha),$$
so that $\alpha-s\beta\in\Delta^{im+}$ by (\ref{eqn:norm_roots}). But Lemma~\ref{lemma:basic_propI}(2)  then yields
$$-(\alpha|\beta)=(\alpha-s\beta|\beta)-(2\alpha-s\beta|\beta)=(\alpha-s\beta|\beta)<0,$$
a contradiction.

\smallskip
\noindent
{\bf Claim 2:} \emph{$(\ad y)^{n+1}x^*\neq 0$, and if $(n+1)\beta-\alpha\in\Delta^{re+}$ then $(\ad y)^{n+2}x^*\neq 0$.}
\\
Indeed, let $m\in\{1,\dots,n+1\}$ be maximal such that $(\ad y)^{m}x^*\neq 0$ (if $[y,x^*]=0$ then $[y,x]\neq 0$ by Lemma~\ref{lemma:basic1}, a contradiction). We may assume that $(\ad y)^{m+1}x^*=0$, for otherwise $m=n+1$ and the claim is clear. Then
\begin{align*}
0&=(\ad y^*)(\ad y)^{m+1}x^*=\sum_{i=1}^{m+1}(\beta | (m+1-i)\beta-\alpha)(\ad y)^{m}x^*\\
&=-\frac{m+1}{2}(2\alpha-m\beta|\beta)(\ad y)^{m}x^*,
\end{align*}
so that 
\begin{equation*}
(2\alpha-m\beta|\beta)=0.
\end{equation*}
But as $\alpha-(m-1)\beta\in\Delta^+$ (because $(\ad y)^{m-1}x^*\neq 0$), Claim~1 implies that $\alpha-m\beta\notin\Delta^{im+}\cup\Delta^{re}$. As $\alpha-m\beta\in\Delta^+$ when $m\leq n$ (because $(\ad y)^{m}x^*\neq 0$), 
we deduce that $m=n+1$ and that $(n+1)\beta-\alpha\notin\Delta^{re+}$, yielding the claim.

\smallskip
\noindent
{\bf Claim 3:} \emph{If $(n+1)\beta-\alpha\in\Delta^{re+}$ or if $\alpha-(n-1)\beta\in\Delta^{re+}$, then $\alpha-n\beta\in\Delta^{im+}$.}
\\
Indeed, since $(\ad y)^{n+1}x^*\neq 0$ by Claim~2 (and hence also $(\ad y^*)^{n-1}x\neq 0$), and since $[(\ad y)^{n+1}x^*,(\ad y^*)^{n-1}x]=0$ by (\ref{eqn:3}), Lemma~\ref{lemma:prop36}(1) implies that
$$0\leq ((n+1)\beta-\alpha|\alpha-(n-1)\beta)=(\beta|\beta)-(\alpha-n\beta| \alpha-n\beta),$$
so that the claim follows from (\ref{eqn:norm_roots}).

\smallskip
\noindent
{\bf Claim 4:} \emph{If $(n+1)\beta-\alpha\in\Delta^{re+}$, then $(n+2)\beta-\alpha\notin\Delta^{re+}$.}
\\
Indeed, assume for a contradiction that $(n+2)\beta-\alpha\in\Delta^{re+}$. Since $(\ad y)^{n+2}x^*\neq 0$ by Claim~2 (and hence also $(\ad y^*)^{n}x\neq 0$), and since $[(\ad y)^{n+2}x^*,(\ad y^*)^{n}x]=0$ by (\ref{eqn:4}), Lemma~\ref{lemma:prop36}(1) then implies that
$$0\leq ((n+2)\beta-\alpha|\alpha-n\beta)=(\beta|\beta)-((n+1)\beta-\alpha| (n+1)\beta-\alpha),$$
contradicting (\ref{eqn:norm_roots}).

\medskip
We can now prove that the cases (1)--(4) cannot occur. For case (1), this follows from Claim~2.

In case (2), Claims~2 and 4 imply that $(\ad y)^{n+2}x^*\neq 0$ and $(n+2)\beta-\alpha\in\Delta^{im+}$, and Claim~3 yields $\alpha-n\beta\in\Delta^{im+}$. Since, moreover,
$$((n+2)\beta-\alpha|\alpha-n\beta)=(\beta|\beta)-((n+1)\beta-\alpha|(n+1)\beta-\alpha)<0$$
by (\ref{eqn:norm_roots}), whereas the difference between $(n+2)\beta-\alpha$ and $\alpha-n\beta$ is not a root by (\ref{eqn:multiple_roots}), Lemma~\ref{lemma:basic1} implies that $[(\ad y)^{n+2}x^*,(\ad y^*)^{n}x]\neq 0$, contradicting (\ref{eqn:4}).

In case (3), there exists some $w\in\WW$ such that $w\alpha-(n-1)w\beta=\alpha_i$ for some $i\in I$. On the other hand, Claim~3 implies that $\alpha-n\beta\in\Delta^{im+}$ and hence $$\alpha_i-w\beta=w(\alpha-n\beta)\in\Delta^{im+},$$ contradicting the fact that $w\beta\in\Delta^{im+}$.

Finally, since the cases (1), (2) and (3) cannot occur, we deduce that $(n+1)\beta-\alpha,\alpha-(n-1)\beta\in\Delta^{im+}$. Since $2\beta\in\Delta^{im+}$ by (\ref{eqn:multiple_roots}), Lemma~\ref{lemma:basic_propI}(2) then implies that
$$((n+1)\beta-\alpha|\alpha-(n-1)\beta)<0,$$
and hence case (4) cannot occur either, as desired.
\end{proof}

\begin{theorem}\label{thm:pre-free}
Let $\alpha,\beta\in\Delta$. If $(\alpha|\beta)<0$ then $[x,y]\neq 0$ for every nonzero $x\in\g_{\alpha}$ and $y\in\g_{\beta}$ such that $\CC x\neq\CC y$.
\end{theorem}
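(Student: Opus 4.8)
The statement asks to prove that if $(\alpha|\beta)<0$, then $[x,y]\neq 0$ for all nonzero $x\in\g_{\alpha}$, $y\in\g_{\beta}$ with $\CC x\neq\CC y$. The key observation is that the hypothesis $(\alpha|\beta)<0$ together with (\ref{eqn:norm_roots}) severely constrains the types of $\alpha,\beta$. The plan is to reduce the general statement to the cases already handled in the preceding lemmas by a careful case analysis on whether $\alpha,\beta$ are real or imaginary.

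First I would dispose of the case where at least one of $\alpha,\beta$ is real. If, say, $\alpha\in\Delta^{re}$, then Lemma~\ref{lemma:prop36}(1) applies directly and gives $[x,y]\neq 0$ for \emph{all} nonzero $x\in\g_{\alpha}$, $y\in\g_{\beta}$, with no need for the hypothesis $\CC x\neq\CC y$ (this extra hypothesis is only needed to rule out the degenerate situation $\alpha=\beta$, $\CC x=\CC y$, which forces $[x,y]=0$ trivially). So the only remaining case is $\alpha,\beta\in\Delta^{im}$. Moreover, since $(\alpha|\beta)<0$, both roots have the same sign: if one were positive and the other negative, I would apply $\omega$ to reduce to the positive case, and indeed by (\ref{eqn:norm_roots}) and the fact that $\g_{\gamma}=\g_{-\gamma}$ under $\omega$, we may assume $\alpha,\beta\in\Delta^{im+}$.

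Once reduced to $\alpha,\beta\in\Delta^{im+}$ with $(\alpha|\beta)<0$, the proof splits according to the nature of $\alpha-\beta$. If $\alpha=\beta$, then since $(\alpha|\alpha)<0$ the root $\alpha$ is non-isotropic, and Lemma~\ref{lemma:Kac_free_Lie}(1) says $\g_{\NN^*\alpha}$ is a free Lie algebra; the bracket of two linearly independent elements $x,y\in\g_{\alpha}$ is then nonzero (precisely because $\CC x\neq\CC y$, distinct generators in a free Lie algebra have nonzero bracket). If $\alpha\neq\beta$, then $\alpha-\beta\in\Delta\cup\{0\}$ is examined: when $\alpha-\beta\in\Delta^{re}$, Lemma~\ref{lemma:a-bre+} gives the conclusion directly; when $\alpha-\beta\in\Delta^{im}$, Lemma~\ref{lemma:a-bim+} does. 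The only gap is when $\alpha-\beta\notin\Delta\cup\{0\}$; here I would invoke Proposition~\ref{prop:basic1}, observing that if $[x,y]$ were zero then $[\omega(y),x]\in\g_{\alpha-\beta}=\{0\}$ is automatic, whence $x,y$ generate a free Lie algebra and in particular $[x,y]\neq 0$, a contradiction.

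\textbf{Main obstacle.} The conceptual content has already been absorbed into Lemmas~\ref{lemma:a-bre+} and \ref{lemma:a-bim+}, so the theorem itself is essentially an exhaustive bookkeeping over the cases for $\alpha-\beta$. The subtle point to get right is the reduction step: ensuring that applying $\omega$ or passing to $\WW^*$-conjugates preserves both the hypothesis $(\alpha|\beta)<0$ (which it does, as $\omega$ negates degrees and $(\cdot|\cdot)$ is $\WW$-invariant) and the condition $\CC x\neq\CC y$ (which must be checked is not destroyed). The one place demanding genuine care is the diagonal case $\alpha=\beta$: here the hypothesis $\CC x\neq\CC y$ is \emph{essential} and cannot be dropped, and one must confirm that $(\alpha|\alpha)<0$ places $\alpha$ in the non-isotropic regime where Lemma~\ref{lemma:Kac_free_Lie}(1) applies rather than the Heisenberg regime of part~(2).
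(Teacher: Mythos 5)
Your proposal follows the paper's own proof almost step for step: the real case is dispatched by Lemma~\ref{lemma:prop36}(1), the problem is reduced to $\alpha,\beta\in\Delta^{im+}$, the diagonal case $\alpha=\beta$ is handled by the free Lie algebra of Lemma~\ref{lemma:Kac_free_Lie}(1) (and you are right that this is exactly where the hypothesis $\CC x\neq\CC y$ is used), and the remaining trichotomy $\alpha-\beta\notin\Delta$, $\alpha-\beta\in\Delta^{re}$, $\alpha-\beta\in\Delta^{im}$ is settled by Proposition~\ref{prop:basic1}, Lemma~\ref{lemma:a-bre+} and Lemma~\ref{lemma:a-bim+} respectively, just as in the paper. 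Your observation that $\alpha-\beta\notin\Delta\cup\{0\}$ makes the hypothesis $[\omega(y),x]=0$ of Proposition~\ref{prop:basic1} automatic is also exactly the point the paper relies on.

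There is, however, one defective step in your reduction: the elimination of the mixed-sign case. You write that if one of $\alpha,\beta$ were positive and the other negative, you would ``apply $\omega$ to reduce to the positive case.'' This does not work: $\omega$ maps $\g_{\alpha}$ to $\g_{-\alpha}$ and $\g_{\beta}$ to $\g_{-\beta}$ \emph{simultaneously}, so it carries a mixed-sign pair to another mixed-sign pair; and (\ref{eqn:norm_roots}), which you cite, only concerns $(\gamma|\gamma)$ and says nothing about $(\alpha|\beta)$ for non-proportional roots. The correct observation --- the one the paper makes --- is that the mixed case cannot occur at all under the hypothesis: if $\alpha\in\Delta^{im+}$ and $\beta\in\Delta^{im-}$, then $\alpha$ and $-\beta$ both lie in $\Delta^{im+}$, so Lemma~\ref{lemma:basic_propI}(1) gives $(\alpha|{-}\beta)\leq 0$, i.e. $(\alpha|\beta)\geq 0$, contradicting $(\alpha|\beta)<0$. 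Hence $\alpha,\beta$ lie in the same half $\Delta^{im+}$ or $\Delta^{im-}$, and only then does applying $\omega$ legitimately reduce to the positive case. This is a one-line repair, but as written your argument asserts the same-sign claim with a justification that fails, and all three lemmas you invoke afterwards require $\alpha,\beta\in\Delta^{im+}$, so the step cannot be skipped.
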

\begin{proof}
If $\alpha$ or $\beta$ is a real root, this follows from Lemma~\ref{lemma:prop36}(1). Assume now that $\alpha,\beta\in\Delta^{im}$. Then Lemma~\ref{lemma:basic_propI}(1) implies that either $\alpha,\beta\in\Delta^{im+}$ or $\alpha,\beta\in\Delta^{im-}$, and there is no loss of generality in assuming that $\alpha,\beta\in\Delta^{im+}$ (using the action of $\omega$). By Lemma~\ref{lemma:Kac_free_Lie}(1), we may moreover assume that $\alpha\neq\beta$. The theorem then follows in that case from Lemma~\ref{lemma:basic1}, Lemma~\ref{lemma:a-bre+}, or Lemma~\ref{lemma:a-bim+}, depending on whether $\alpha-\beta\notin\Delta$, $\alpha-\beta\in\Delta^{re}$, or $\alpha-\beta\in\Delta^{im}$.
\end{proof}

\begin{lemma}\label{lemma:strict_inequality}
Let $\alpha,\beta\in\Delta^{im+}$ with $(\alpha|\beta)<0$. Let $Y$ be a nonzero subspace of $\g_{\beta}$, and let $x,x'\in\g_{\alpha}$ be nonzero. If $[x',Y]\subseteq [x,Y]$, then $\CC x'=\CC x$.
\end{lemma}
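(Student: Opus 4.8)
The plan is to reformulate the statement in terms of the two linear maps $\phi_x,\phi_{x'}\co\g_\beta\to\g_{\alpha+\beta}$ given by $\phi_x(y)=[x,y]$ and $\phi_{x'}(y)=[x',y]$, and to extract from the containment $[x',Y]\subseteq[x,Y]$ a single scalar $\lambda$ with $x'=\lambda x$. The substantive case is $\alpha\neq\beta$ (so that $\g_\alpha\cap\g_\beta=\{0\}$, forcing $\CC x\neq\CC y$ for every nonzero $y\in\g_\beta$); this is the case relevant to Corollary~\ref{corintro:dimension}.

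First I would record that $\phi_x$ and $\phi_{x'}$ are injective on $\g_\beta$. Indeed, since $(\alpha|\beta)<0$, Theorem~\ref{thm:pre-free} gives $[x,y]\neq 0$ for every nonzero $y\in\g_\beta$ with $\CC y\neq\CC x$; as $x\in\g_\alpha$ and $y\in\g_\beta$ lie in distinct root spaces, the condition $\CC x\neq\CC y$ is automatic, so $\Ker\phi_x=\{0\}$, and likewise $\Ker\phi_{x'}=\{0\}$. In particular the restrictions $\phi_x|_Y$ and $\phi_{x'}|_Y$ are injective, and $\g_\beta$ (hence $Y$) is finite-dimensional since root multiplicities are finite.

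Next, using injectivity of $\phi_x$ together with the hypothesis $\im(\phi_{x'}|_Y)=[x',Y]\subseteq[x,Y]=\im(\phi_x|_Y)$, I would define the linear map $T:=(\phi_x|_Y)^{-1}\circ(\phi_{x'}|_Y)\co Y\to Y$. It is well-defined because $(\phi_x|_Y)^{-1}$ is defined on $\im(\phi_x|_Y)$, which contains $[x',Y]$, and it satisfies $[x,Ty]=[x',y]$ for all $y\in Y$ by construction. It is injective, since $Ty=0$ gives $[x',y]=[x,0]=0$ and hence $y=0$ by injectivity of $\phi_{x'}|_Y$; as $Y$ is finite-dimensional, $T$ is therefore an automorphism of $Y$. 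Finally, because $\CC$ is algebraically closed and $Y\neq\{0\}$, the operator $T$ admits an eigenvector $y_0\neq 0$, say $Ty_0=\lambda y_0$. Then $[x',y_0]=[x,Ty_0]=[x,\lambda y_0]$, that is $[\,x'-\lambda x,\,y_0\,]=0$. Applying Theorem~\ref{thm:pre-free} once more to $x'-\lambda x\in\g_\alpha$ and $y_0\in\g_\beta$ (again $\CC(x'-\lambda x)\neq\CC y_0$, as these lie in different root spaces), the vanishing of this bracket forces $x'-\lambda x=0$, whence $\CC x'=\CC x$.

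The main obstacle is producing a single scalar $\lambda$: a priori the containment only yields, for each $y\in Y$, \emph{some} $y'\in Y$ with $[x',y]=[x,y']$, and the assignment $y\mapsto y'$ could be any automorphism of $Y$ rather than a scalar. The device that overcomes this is precisely to package this assignment into the operator $T$ and diagonalise it; the eigenvector $y_0$ is what converts the pointwise relation into the rigid identity $[x'-\lambda x,y_0]=0$, at which point the injectivity statement of Theorem~\ref{thm:pre-free} delivers the conclusion.
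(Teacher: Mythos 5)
Your proof is correct and is essentially the paper's own argument in coordinate-free form: the paper expresses the containment in a basis $y_1,\dots,y_n$ of $Y$ via a matrix $\Lambda$ and finds an eigenvector of $\Lambda^T$, which is exactly an eigenvector of your operator $T=(\phi_x|_Y)^{-1}\circ(\phi_{x'}|_Y)$ (your use of injectivity of $\phi_x|_Y$ to define $T$ is the only extra step, and it is available by Theorem~\ref{thm:pre-free}), and both proofs conclude from $[x'-\lambda x,y_0]=0$ via Theorem~\ref{thm:pre-free}. Your explicit restriction to $\alpha\neq\beta$ is likewise implicit in the paper's proof, whose final appeal to Theorem~\ref{thm:pre-free} also requires that $\g_{\alpha}$ and $\g_{\beta}$ be distinct root spaces, so nothing is lost relative to the paper.
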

\begin{proof}
Write $Y=\bigoplus_{i=1}^n\CC y_i$ for some linearly independent $y_i\in\g_{\beta}$. Consider the complex matrix $\Lambda=(\lambda_{ij})_{1\leq i,j\leq n}$ defined by $$[x',y_i]=\sum_{j=1}^n\lambda_{ij}[x,y_j]\quad\textrm{for all $i=1,\dots,n$.}$$
In view of Theorem~\ref{thm:pre-free}, it is sufficient to show that the equation
$$[x'-ax,\sum_{i=1}^n\mu_iy_i]=0$$
admits a nontrivial solution $(a,\mu_1,\dots,\mu_n)\in\CC^{n+1}$, in the sense that $(\mu_1,\dots,\mu_n)\neq (0,\dots,0)$. This equation can be rewritten as
$$\sum_{i=1}^n\sum_{j=1}^n\mu_i\lambda_{ij}[x,y_j]=\sum_{i=1}^na\mu_i[x,y_i],$$
and it is thus sufficient to find a nontrivial solution to the equation
$$\sum_{i=1}^n\sum_{j=1}^n\mu_i\lambda_{ij}y_j=\sum_{i=1}^na\mu_iy_i.$$
In turn, this last equation is equivalent to the system of equations
$$\sum_{i=1}^n\mu_i\lambda_{ij}=a\mu_j\quad\textrm{for $j=1,\dots,n$,}$$
which we can rewrite as
$$\Lambda^T\mu=a\mu, \quad\textrm{where $\mu:=\begin{psmallmatrix}\mu_1\\ \vdots \\ \mu_n\end{psmallmatrix}$.}$$
In other words, it is sufficient to find a (nonzero) eigenvector $\mu$ of $\Lambda^T$, which of course always exists over $\CC$.
\end{proof}

\begin{corollary}
Let $\alpha,\beta\in\Delta^{im+}$ with $\alpha\neq\beta$. Then one of the following holds:
\begin{enumerate}
\item
$[\g_{\alpha},\g_{\beta}]=\{0\}$. In this case, either $\alpha+\beta\notin\Delta$ or $\alpha,\beta$ are proportional isotropic roots.
 \item
 $\dim [\g_{\alpha},\g_{\beta}]\geq\max\{\dim\g_{\alpha},\dim\g_{\beta}\}$, with equality if and only if \\ $\min\{\dim\g_{\alpha},\dim\g_{\beta}\}=1$.
 \end{enumerate}
\end{corollary}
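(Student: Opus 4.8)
The plan is to let the entire dichotomy be governed by the sign of $(\alpha|\beta)$, which is $\leq 0$ by Lemma~\ref{lemma:basic_propI}(1). I first handle the case $(\alpha|\beta)=0$, which I claim produces outcome (1). By Lemma~\ref{lemma:basic_propI}(2), the vanishing of $(\alpha|\beta)$ forces either $\alpha+\beta\notin\Delta$ or $\alpha,\beta$ to be proportional isotropic roots. In the former case $[\g_\alpha,\g_\beta]\subseteq\g_{\alpha+\beta}=\{0\}$ immediately. In the latter case I would write $\alpha=p\delta$ and $\beta=q\delta$ with $p,q\in\NN^*$, $p\neq q$, for the common primitive positive isotropic imaginary root $\delta$ on their ray, and invoke Lemma~\ref{lemma:Kac_free_Lie}(2): the space $H:=\CC\delta^\sharp\oplus\bigoplus_{n\in\ZZ^*}\g_{n\delta}$ is an infinite Heisenberg algebra, so $[H,H]$ is its one-dimensional center $\CC\delta^\sharp$. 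Since $\g_\alpha,\g_\beta\subseteq H$ and $[\g_\alpha,\g_\beta]\subseteq\g_{(p+q)\delta}\cap\CC\delta^\sharp=\{0\}$ (as $(p+q)\delta\neq 0$), the bracket vanishes. This settles (1) together with its stated description.

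For the remaining case $(\alpha|\beta)<0$ I expect to land in outcome (2). The crucial observation is that $\alpha\neq\beta$ gives $\g_\alpha\cap\g_\beta=\{0\}$, so any nonzero $x\in\g_\alpha$ and $y\in\g_\beta$ automatically satisfy $\CC x\neq\CC y$; Theorem~\ref{thm:pre-free} then yields $[x,y]\neq 0$ for \emph{all} such $x,y$. For the lower bound, fixing a nonzero $y\in\g_\beta$ makes the linear map $\g_\alpha\to\g_{\alpha+\beta}$, $x\mapsto[x,y]$, injective, so $\dim[\g_\alpha,\g_\beta]\geq\dim[\g_\alpha,y]=\dim\g_\alpha$; fixing instead a nonzero $x\in\g_\alpha$ gives $\dim[\g_\alpha,\g_\beta]\geq\dim\g_\beta$, and hence $\dim[\g_\alpha,\g_\beta]\geq\max\{\dim\g_\alpha,\dim\g_\beta\}$.

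The heart of the matter is the equality clause. If $\min\{\dim\g_\alpha,\dim\g_\beta\}=1$, say $\g_\beta=\CC y$, then $[\g_\alpha,\g_\beta]=[\g_\alpha,y]$ has dimension $\dim\g_\alpha=\max\{\dim\g_\alpha,\dim\g_\beta\}$, so equality holds. For the converse I argue through Lemma~\ref{lemma:strict_inequality}. Assume equality and, by the antisymmetry of the bracket, that $\dim\g_\alpha\leq\dim\g_\beta$, so that $\max\{\dim\g_\alpha,\dim\g_\beta\}=\dim\g_\beta$. Fixing any nonzero $x\in\g_\alpha$, the subspace $[x,\g_\beta]\subseteq[\g_\alpha,\g_\beta]$ has dimension $\dim\g_\beta=\dim[\g_\alpha,\g_\beta]$, so the two coincide. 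Then for every nonzero $x'\in\g_\alpha$ we have $[x',\g_\beta]\subseteq[\g_\alpha,\g_\beta]=[x,\g_\beta]$, and Lemma~\ref{lemma:strict_inequality} (applied with $Y=\g_\beta$) forces $\CC x'=\CC x$; as $x'$ ranges over all nonzero elements of $\g_\alpha$, this gives $\dim\g_\alpha=1$, i.e. $\min\{\dim\g_\alpha,\dim\g_\beta\}=1$. I expect this final step --- upgrading the numerical equality $\dim[\g_\alpha,\g_\beta]=\dim\g_\beta$ to the set-level identity $[x,\g_\beta]=[\g_\alpha,\g_\beta]$ that feeds Lemma~\ref{lemma:strict_inequality} --- to be the main obstacle.
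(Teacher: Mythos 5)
Your proposal is correct and takes essentially the same route as the paper's proof: case (1) is handled via Lemma~\ref{lemma:basic_propI}(2) and the Heisenberg structure from Lemma~\ref{lemma:Kac_free_Lie}(2), the lower bound comes from the injectivity of $x\mapsto[x,y]$ supplied by Theorem~\ref{thm:pre-free}, and the equality clause rests on Lemma~\ref{lemma:strict_inequality} with $Y=\g_{\beta}$. The only cosmetic difference is that you run the last step in contrapositive form (equality forces $\dim\g_{\alpha}=1$), whereas the paper shows directly that $\dim\g_{\alpha}\geq 2$ yields $[x',\g_{\beta}]\not\subseteq[x,\g_{\beta}]$ and hence strict inequality.
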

\begin{proof}
If $\alpha+\beta\notin\Delta$, or if $\alpha,\beta$ are proportional isotropic roots, then $[\g_{\alpha},\g_{\beta}]=\{0\}$ (see Lemma~\ref{lemma:Kac_free_Lie}(2)). We may thus assume by Lemma~\ref{lemma:basic_propI}(2) that $(\alpha|\beta)<0$. Up to permuting $\alpha$ and $\beta$, we may moreover assume that $\dim\g_{\alpha}\leq \dim\g_{\beta}$. Let $x\in\g_{\alpha}$ be nonzero. Theorem~\ref{thm:pre-free} then implies that $\dim [\g_{\alpha},\g_{\beta}]\geq\dim [x,\g_{\beta}]=\dim\g_{\beta}$, with equality if $\dim\g_{\alpha}=1$. On the other hand, if $\dim\g_{\alpha}\geq 2$, then choosing some $x'\in\g_{\alpha}\setminus\CC x$, we deduce from Lemma~\ref{lemma:strict_inequality} that $[x',\g_{\beta}]\not\subseteq [x,\g_{\beta}]$, so that $\dim [\g_{\alpha},\g_{\beta}]>\dim [x,\g_{\beta}]=\dim\g_{\beta}$, as desired.
\end{proof}

\section{Solvable and nilpotent subalgebras}
We fix again a symmetrisable GCM $A$, and keep all notations from \S\ref{section:preliminaries}.
In this section, we characterise the solvable and nilpotent graded subalgebras of $\g(A)$. We recall that a Lie algebra $\LLL$ is {\bf solvable} (resp. {\bf nilpotent}) if $\LLL^{(n)}=\{0\}$ (resp. $\LLL^n=\{0\}$) for some $n\in\NN$, where the subalgebras $\LLL^{(n)}$ and $\LLL^n$ of $\LLL$ are defined recursively by
$$\LLL^{(0)}=\LLL^0:=\LLL, \quad \LLL^{(n+1)}:=[\LLL^{(n)},\LLL^{(n)}], \quad\textrm{and}\quad \LLL^{n+1}:=[\LLL,\LLL^n]\quad\textrm{for all $n\in\NN$.}$$

 We also recall that, given a subalgebra $\LLL$ of $\g(A)$, an element $x\in\LLL$ is called {\bf $\ad$-locally nilpotent on $\LLL$} if for every $y\in\LLL$, there exists some $N=N(y)\in\NN$ such that $(\ad x)^Ny=0$. More generally, $x\in\LLL$ is called {\bf $\ad$-locally finite on $\LLL$} if for every $y\in\LLL$, there exists a finite-dimensional subspace $V=V(y)\subseteq\LLL$ containing $y$ and such that $[x,V]\subseteq V$. For instance, the Chevalley generators $e_i,f_i$ ($i\in I$) are $\ad$-locally nilpotent on $\g(A)$, while the elements of the Cartan subalgebra $\hh$ are $\ad$-locally finite on $\g(A)$.

\begin{lemma}\label{lemma:symm1}
Let $\beta\in\Delta^{im+}$ and $\gamma\in\Delta^{re}$ be such that $\beta\pm\gamma\in\Delta^{re}$. Then $(\beta|\gamma)=0$. If, moreover, $\beta\in\Delta^{im+}_{an}$, then for any $x\in\g_{\beta}$, the following assertions are equivalent:
\begin{enumerate}
\item
$[x,e_{\gamma-\beta}]\neq 0$.
\item
$[x,e_{\gamma}]\neq 0$.
\item
$(\ad x)^ne_{\gamma-\beta}\neq 0$ for all $n\in\NN$.
\end{enumerate}
\end{lemma}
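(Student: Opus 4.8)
\emph{The plan} is to establish the two assertions separately, invoking the hypothesis $\beta\in\Delta^{im+}_{an}$ (i.e. $(\beta|\beta)<0$) only where it is genuinely needed.

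\textbf{The relation $(\beta|\gamma)=0$.} Here I would analyse the $\gamma$-string $S(\gamma,\beta)=(\beta+\ZZ\gamma)\cap\Delta$ through $\beta$ by means of Lemma~\ref{lemma:RS}, writing $S(\gamma,\beta)=\{\beta+n\gamma \mid -p\le n\le q\}$ with $p-q=\la\beta,\gamma^\vee\ra$. By hypothesis $\beta\in S(\gamma,\beta)$ is imaginary while $\beta-\gamma,\beta+\gamma\in S(\gamma,\beta)$ are real, which rules out cases (1), (2) and (4) of Lemma~\ref{lemma:RS} (the last because it would force $\beta$ to be real). In case (3) the only real roots of the string are its two extremities, forcing $p=q=1$; in case (5) the real roots are the two outermost roots at each end, and since the central term $\beta$ is imaginary one is forced to have $p=q=2$. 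In both situations $\la\beta,\gamma^\vee\ra=p-q=0$, whence $(\beta|\gamma)=\tfrac12(\gamma|\gamma)\la\beta,\gamma^\vee\ra=0$ by \eqref{eqn:coroot}.

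\textbf{Reduction of the equivalences.} Assume now $(\beta|\beta)<0$ and fix a nonzero $x\in\g_\beta$ (for $x=0$ all three statements are false). As $\gamma,\ \gamma+\beta\in\Delta^{re}$, the spaces $\g_\gamma$ and $\g_{\gamma+\beta}$ are one-dimensional, so I may write $[x,e_{\gamma-\beta}]=c\,e_\gamma$ and $[x,e_\gamma]=d\,e_{\gamma+\beta}$ for scalars $c,d$; thus (1) reads $c\neq0$ and (2) reads $d\neq0$. Setting $w_n:=(\ad x)^ne_{\gamma-\beta}\in\g_{\gamma+(n-1)\beta}$, one has $w_1=c\,e_\gamma$ and $w_2=cd\,e_{\gamma+\beta}$, so $w_2\neq0$ exactly when both (1) and (2) hold. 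For $n\ge2$ the degree of $w_n$ satisfies $(\beta|\gamma+(n-1)\beta)=(n-1)(\beta|\beta)<0$ (this is where isotropy of $\beta$ would break the argument), and since $\deg w_n\neq\beta$ forces $\CC x\neq\CC w_n$, Theorem~\ref{thm:pre-free} gives $w_{n+1}=[x,w_n]\neq0$ whenever $w_n\neq0$. Hence $w_2\neq0$ propagates to $w_n\neq0$ for all $n$, i.e. $(1)\wedge(2)\Rightarrow(3)$; and $(3)\Rightarrow(1)$ is the case $n=1$. It therefore remains to prove $(1)\iff(2)$, after which $(3)\iff(1)\wedge(2)\iff(1)\iff(2)$ closes the equivalence.

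\textbf{The crux: $(1)\iff(2)$.} This is the main obstacle, since a priori the brackets $[x,e_{\gamma-\beta}]$ and $[x,e_\gamma]$ involve unrelated data; I would bridge them using the $\sll_2$-theory together with the invariant form. Consider $\g_{(\gamma)}=\g_{-\gamma}+\CC\gamma^\vee+\g_\gamma\cong\sll_2(\CC)$ acting on $M':=\bigoplus_{n}\g_{-\beta+n\gamma}$; by $\la-\beta,\gamma^\vee\ra=0$ this is a finite-dimensional module of even weights whose $\gamma$-string runs symmetrically from $-\beta-p\gamma$ to $-\beta+p\gamma$, with real (hence one-dimensional) extremities. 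Since $\dim M'_{2}=\dim\g_{-\beta+\gamma}=1$, the module $M'$ has a unique irreducible summand $V$ of dimension $>1$, namely the one generated by the highest-weight vector $e_{-\beta+p\gamma}$, and both weight spaces $M'_{\pm2}$ lie in $V$. Consequently $u:=[e_{\gamma-\beta},e_{-\gamma}]=-(\ad e_{-\gamma})e_{\gamma-\beta}$ and $v:=[e_\gamma,e_{-\beta-\gamma}]=(\ad e_\gamma)e_{-\beta-\gamma}$ both lie in the one-dimensional weight space $V_0\subseteq\g_{-\beta}$, and they are nonzero by Lemma~\ref{lemma:prop36}(2) (as $-\beta\in\Delta$), so $u=\lambda v$ for some $\lambda\neq0$. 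Invariance of the form then yields
$$c\,(e_\gamma|e_{-\gamma})=([x,e_{\gamma-\beta}]|e_{-\gamma})=(x|u)=\lambda\,(x|v)=\lambda\,(x|[e_\gamma,e_{-\beta-\gamma}])=\lambda\,d\,(e_{\gamma+\beta}|e_{-\gamma-\beta}),$$
and since the two form-values on the one-dimensional real root spaces are nonzero, $c\neq0\iff d\neq0$, i.e. $(1)\iff(2)$. The delicate structural input is that $\dim M'_{\pm2}=1$, which confines $u$ and $v$ to a single line and is exactly what makes these two a priori independent brackets proportional.
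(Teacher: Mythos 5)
Your proof is correct, but both main steps take a genuinely different route from the paper's. For $(\beta|\gamma)=0$, the paper argues directly with norms: since $(\gamma|\gamma)>0$ and $\beta\pm\gamma$ are real, $0<(\beta\pm\gamma|\beta\pm\gamma)/(\gamma|\gamma)=(\beta|\beta)/(\gamma|\gamma)\pm\beta(\gamma^{\vee})+1$, and $(\beta|\beta)\leq 0$ then forces the integer $\beta(\gamma^{\vee})$ to have absolute value $<1$, hence to vanish; your string analysis via Lemma~\ref{lemma:RS} reaches the same conclusion with a longer case check but no extra input. The real divergence is in the crux $(1)\iff(2)$: the paper first shows $[x,e_{\gamma-\beta}]=0\Rightarrow[x,e_{\gamma}]=0$ (the Jacobi identity together with $(\beta|\gamma)=0$ gives $[[x,e_{\gamma}],\g_{-\beta}]=\{0\}$, and Lemma~\ref{lemma:prop36}(2) then kills $[x,e_{\gamma}]$), and then upgrades the resulting kernel inclusion $\ker u_1\subseteq\ker u_2$ to an equality because both maps $u_1\co\g_{\beta}\to\g_{\gamma}$ and $u_2\co\g_{\beta}\to\g_{\gamma+\beta}$ are nonzero with one-dimensional targets. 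You instead decompose the string module $M'=\bigoplus_{n}\g_{-\beta+n\gamma}$ under $\g_{(\gamma)}\cong\sll_2(\CC)$, use $\dim\g_{-\beta+\gamma}=1$ to isolate the unique nontrivial irreducible summand containing both weight-$\pm 2$ spaces, deduce that $[e_{\gamma-\beta},e_{-\gamma}]$ and $[e_{\gamma},e_{-\beta-\gamma}]$ span the same line in $\g_{-\beta}$, and convert this proportionality into $c\neq 0\iff d\neq 0$ via invariance of the bilinear form. Your route buys a direct, symmetric relation between the two structure constants, at the cost of invoking complete reducibility of $M'$ and non-degeneracy of the pairing between $\g_{\alpha}$ and $\g_{-\alpha}$ (\cite[Theorem~2.2e)]{Kac}); the paper's kernel comparison is more elementary, needing only Lemma~\ref{lemma:prop36} and one-dimensionality of real root spaces. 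The final step (propagating $(\ad x)^2e_{\gamma-\beta}\neq 0$ to all powers via Theorem~\ref{thm:pre-free}, since $(\beta|\gamma+(n-1)\beta)=(n-1)(\beta|\beta)<0$ for $n\geq 2$) is essentially identical to the paper's.
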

\begin{proof}
Note that
$$0<\frac{(\beta\pm\gamma|\beta\pm\gamma)}{(\gamma|\gamma)}=\frac{(\beta|\beta)}{(\gamma|\gamma)}\pm 2\frac{(\beta|\gamma)}{(\gamma|\gamma)}+1=\frac{(\beta|\beta)}{(\gamma|\gamma)}\pm \beta(\gamma^{\vee})+1$$
by (\ref{eqn:coroot}) and (\ref{eqn:norm_roots}). Thus
$$0\geq\frac{(\beta|\beta)}{(\gamma|\gamma)}>|\beta(\gamma^{\vee})|-1$$
by (\ref{eqn:norm_roots}), so that $\beta(\gamma^{\vee})=0=(\beta|\gamma)$ (recall that $\beta(\gamma^{\vee})\in\ZZ$ by Lemma~\ref{lemma:RS}). This proves the first claim.

Assume now that $(\beta|\beta)< 0$ and let $x\in\g_{\beta}$. Set $\alpha:=\gamma-\beta\in\Delta^{re}$.
If $[x,e_{\alpha}]=0$, then for any $z\in\g_{-\beta}$,  (\ref{eqn:prop_inv_form}) yields
$$[z,[x,e_{\gamma}]]\in \CC (\beta|\gamma)e_{\gamma}+\CC [x,e_{\alpha}]=\{0\},$$
that is, $[[x,e_{\gamma}],\g_{-\beta}]=\{0\}$. Thus, if $[x,e_{\alpha}]=0$, then $[x,e_{\gamma}]=0$ by Lemma~\ref{lemma:prop36}(2) as $\deg([x,e_{\gamma}])+(-\beta)=\gamma\in\Delta$. In other words, if we consider the linear maps 
$$u_1\co\g_{\beta}\to\g_{\gamma}:z\mapsto [e_{\alpha},z]\quad\textrm{and}\quad u_2\co\g_{\beta}\to\g_{\gamma+\beta}:z\mapsto [e_{\gamma},z],$$
then $\ker u_1\subseteq\ker u_2\subseteq\g_{\beta}$. On the other hand, note that $u_1$ and $u_2$ are nonzero by Lemma~\ref{lemma:prop36}(2). Since $\dim(\g_{\gamma})=1$, we deduce that $\ker u_1$ has codimension $1$ in $\g_{\beta}$ and hence that $\ker u_1=\ker u_2$. This proves the equivalence of (1) and (2). Finally, if (1) (and hence also (2)) holds, we already know that $(\ad x)^ne_{\alpha}\neq 0$ for $n=1,2$. That it holds for all $n\in\NN$ then follows from Theorem~\ref{thm:pre-free} since $(\beta|\alpha+n\beta)=(n-1)(\beta|\beta)<0$ for all $n\in\NN$ with $n\geq 2$ by assumption.
\end{proof}

\begin{lemma}\label{lemma:isotropic+real}
Let $\alpha\in\Delta^{re}$ and $\beta\in\Delta_{is}^{im+}$ be such that $(\alpha|\beta)\geq 0$. Let $w\in\WW$ be such that $w\beta\in K_0$. Then the following assertions hold:
\begin{enumerate}
\item
Either $\supp(w\alpha)\subseteq\supp(w\beta)$, or $\supp(w\alpha)\cup\supp(w\beta)$ is not connected.
\item
If $x\in\g_{\alpha}$ and $y\in\g_{\beta}$ are such that $[x,y]\neq 0$, then $(\ad y)^nx\neq 0$ for all $n\in\NN$.
\end{enumerate}
\end{lemma}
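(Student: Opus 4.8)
The plan is to treat (1) by a support computation with the invariant form, and (2) by first forcing $(\alpha|\beta)=0$, then reducing the whole configuration into the affine subalgebra determined by $\supp(w\beta)$, and finally running a Heisenberg-type recursion along the $\beta$-string.

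For (1), I would first use the $\WW$-invariance of the form to replace $(\alpha,\beta)$ by $(w\alpha,w\beta)$, so that $\beta':=w\beta\in K_0$. By (\ref{eqn:isotropic}) the diagram $\supp(\beta')$ is then of affine type, and by (\ref{eqn:isotropic_zero}) one has $(\beta'|\alpha_i)=0$ for every $\alpha_i\in\supp(\beta')$. Writing $w\alpha=\sum_i n_i\alpha_i$ (the $n_i$ all of the same sign) I would expand
$$(\alpha|\beta)=(w\alpha|\beta')=\sum_{\alpha_i\in\supp(w\alpha)\setminus\supp(\beta')}n_i(\alpha_i|\beta'),$$
where each $(\alpha_i|\beta')\leq 0$ because $\beta'\in K_0$ and $\alpha_i\notin\supp(\beta')$, with strict inequality exactly when $\alpha_i$ is adjacent to $\supp(\beta')$. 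Comparing signs with $(\alpha|\beta)\geq 0$ forces every such term to vanish, i.e. no vertex of $\supp(w\alpha)$ outside $\supp(\beta')$ is adjacent to $\supp(\beta')$; since $\supp(w\alpha)$ is connected, this leaves only the two stated alternatives.

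For (2), note first that $[x,y]\neq 0$ gives $\gamma:=\alpha+\beta\in\Delta$, and I claim $(\alpha|\beta)=0$. Indeed, since $(\gamma|\gamma)=(\alpha|\alpha)+2(\alpha|\beta)>0$ by (\ref{eqn:norm_roots}), the root $\gamma$ is real, so by (\ref{eqn:coroot})
$$\la\beta,\gamma^{\vee}\ra=\frac{2(\beta|\gamma)}{(\gamma|\gamma)}=\frac{2(\alpha|\beta)}{(\alpha|\alpha)+2(\alpha|\beta)}$$
must be an integer; this is impossible if $(\alpha|\beta)>0$, as the right-hand side then lies strictly between $0$ and $1$. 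Hence $(\alpha|\beta)=0$. The support argument of (1) now works for either sign of $w\alpha$ (a disconnected alternative would make $w\alpha+w\beta$ have coefficients of both signs, contradicting $\gamma\in\Delta$), so $\supp(w\alpha)\subseteq\supp(w\beta)=:J$. Applying $w^*\in\WW^*$, I may then assume $\alpha,\beta$ are supported in the affine subdiagram $J$, with $\beta$ a positive multiple of the null root $\delta$ of $J$ (cf. (\ref{eqn:isotropic_multiple})) and $\alpha$ a real root with $(\alpha|\delta)=0$; by the structure of affine root systems (\cite{Kac}), every $\alpha+k\beta$ ($k\in\ZZ$) is then a real root, so each $\g_{\alpha+k\beta}$ is one-dimensional and nonzero.

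With this reduction I would set $a:=\ad y$, $b:=\ad\omega(y)$, normalising $y$ via Lemma~\ref{lemma:dualbases} so that $[\omega(y),y]=\beta^{\sharp}$. Since $(\alpha+k\beta|\beta)=0$ for all $k$, the operator $[b,a]=\ad\beta^{\sharp}$ vanishes on $M:=\bigoplus_{k}\g_{\alpha+k\beta}$, so $a$ and $b$ commute there. Choosing nonzero $z_k\in\g_{\alpha+k\beta}$ with $z_0=x$ and writing $a z_k=p_kz_{k+1}$, $b z_k=q_kz_{k-1}$, commutativity gives $p_kq_{k+1}=q_kp_{k-1}$, while invariance of the form together with $\omega\circ a\circ\omega=b$ gives $p_kN_{k+1}=-q_{k+1}N_k$, where $N_k:=(z_k|\omega(z_k))\neq 0$ by the non-degeneracy recalled in Lemma~\ref{lemma:dualbases}. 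Eliminating the $q$'s yields
$$p_k^2=p_{k-1}^2\cdot\frac{N_k^2}{N_{k-1}N_{k+1}},$$
so $p_k\neq 0$ iff $p_{k-1}\neq 0$; as $p_0\neq 0$ because $[x,y]\neq 0$, induction gives $p_k\neq 0$ for all $k\geq 0$, whence $(\ad y)^n x=\big(\prod_{j<n}p_j\big)z_n\neq 0$. The main obstacle I anticipate is precisely the reduction step: showing that $[x,y]\neq 0$ pins the configuration inside a single affine subalgebra with $(\alpha|\beta)=0$. The integrality argument forcing $(\alpha|\beta)=0$ is the delicate point; once it is secured, the commuting operators $a,b$ make the remaining recursion purely formal.
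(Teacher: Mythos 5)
Your proof of (1) is essentially the paper's own argument: expand $(w\alpha|w\beta)$ over the vertices of $\supp(w\alpha)$ lying outside $\supp(w\beta)$, use $(\alpha_i|\alpha_j)\leq 0$, and conclude by connectedness of $\supp(w\alpha)$. (Like the paper, your sign-forcing tacitly assumes $w\alpha\in Q_+$: if $w\alpha$ is negative, every term $n_i(\alpha_i|\beta')$ is automatically nonnegative and nothing is forced. Your later observation that the equality $(\alpha|\beta)=0$ makes the argument sign-independent is the right way to handle this, and is in fact more careful than the paper.) For (2) you take a genuinely different route. The paper deduces $\supp(w\alpha)\subseteq\supp(w\beta)$ from (1) and then quotes the (twisted) loop-algebra realisation of affine algebras (\cite[Theorems~7.4 and 8.3]{Kac}), where $y=t^m\otimes h$ with $h\in\mathring{\hh}$ and the conclusion $(\ad y)^nx\neq 0$ is immediate. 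You instead first prove $(\alpha|\beta)=0$ by an integrality argument ($\gamma:=\alpha+\beta$ has positive norm, hence is real, and $\la\beta,\gamma^\vee\ra=2(\alpha|\beta)/\big((\alpha|\alpha)+2(\alpha|\beta)\big)$ must be an integer), and then run a ladder recursion with the commuting operators $\ad y$, $\ad\omega(y)$ and the invariant form; both the integrality step and the recursion $p_k^2=p_{k-1}^2N_k^2/(N_{k-1}N_{k+1})$ are correct. This is more self-contained than the paper's appeal to the realisation theorems.

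There is, however, one genuine gap: you justify ``every $\alpha+k\beta$ ($k\in\ZZ$) is a real root'' solely by ``the structure of affine root systems'', given that $\alpha$ is real and $\beta$ is a positive multiple of the null root $\delta$. That statement is false for the twisted affine types: in $A_2^{(2)}$, if $\alpha$ is a long real root then $\alpha+\delta$ is not a root at all (see \cite[Proposition~6.3]{Kac}), so your $z_k$ need not exist and the recursion collapses. What saves the claim is precisely the hypothesis $[x,y]\neq 0$, i.e. $\gamma=\alpha+\beta\in\Delta$, which you have in hand but do not use at this point. The gap is repaired with your own ingredients: since $\gamma\in\Delta^{re}$ and $(\alpha|\beta)=(\beta|\beta)=0$, one computes $\la\alpha+k\beta,\alpha^\vee\ra=2$ and $\la k\beta-\alpha,\gamma^\vee\ra=-2$, whence $s_\gamma s_\alpha(\alpha+k\beta)=\alpha+(k+2)\beta$ for all $k\in\ZZ$; thus every $\alpha+k\beta$ lies in $\WW\cdot\{\alpha,\gamma\}\subseteq\Delta^{re}$ and all the spaces $\g_{\alpha+k\beta}$ are indeed nonzero and one-dimensional. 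With this fix (or with a type-by-type verification of the affine root systems using $\alpha+\beta\in\Delta$), your argument goes through; note that it then no longer even needs the reduction to the affine subdiagram $J$, since the reflection trick and the ladder recursion only use $\gamma\in\Delta^{re}$ and $(\alpha|\beta)=(\beta|\beta)=0$.
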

\begin{proof}
Using the $\WW^*$-action, we may assume that $\beta\in K_0$ (i.e. $w=1$), so that $J:=\supp(\beta)\subseteq I$ is of affine type by (\ref{eqn:isotropic}). In particular, $(\beta|\gamma)=0$ for all $\gamma\in Q_+$ with $\supp(\gamma)\subseteq J$ by (\ref{eqn:isotropic_zero}). Write $\alpha=\alpha_J+\alpha'$ with $\supp(\alpha_J)\subseteq J$ and $J':=\supp(\alpha')\subseteq I\setminus J$. By assumption, $(\beta|\alpha')=(\beta|\alpha)\geq 0$. Writing $\beta=\sum_{j\in J}k_j\alpha_j$ ($k_j>0$) and $\alpha'=\sum_{i\in J'}k_i'\alpha_i$ ($k_i'>0$), we thus have
$$0\leq \sum_{j\in J}\sum_{i\in J'}k_i'k_j(\alpha_i|\alpha_j),$$
so that $(\alpha_i|\alpha_j)=0$ for all $i\in J'$ and $j\in J$ (recall that $(\alpha_i|\alpha_j)\leq 0$ for all $i\neq j$). But since $\supp(\alpha)$ is connected, this implies that either $\alpha_J=0$ or $\alpha'=0$, yielding (1). 

Let now $x\in\g_{\alpha}$ and $y\in\g_{\beta}$ be such that $[x,y]\neq 0$. In particular, $\alpha+\beta\in\Delta$ and hence $\supp(\alpha)\cup\supp(\beta)$ is connected, so that $\supp(\alpha)\subseteq\supp(\beta)$ by (1). Thus $\alpha,\beta$ are roots of the affine Kac--Moody algebra with GCM $(a_{ij})_{i,j\in J}$, and (2) follows from the realisation of affine Kac--Moody algebras as (twisted) loop algebras over a simple finite-dimensional Lie algebra $\mathring{\g}$ with Cartan subalgebra $\mathring{\hh}\subseteq\hh$ (see \cite[Theorems~7.4 and 8.3]{Kac}): the element $y$ of $\g_{\beta}$ is of the form $y=t^m\otimes h$ for some $h\in\mathring{\hh}$ and $m\in\NN^*$ ($t$ being the indeterminate in the loop algebra), and hence $(\ad y)^nx=t^{mn}\otimes \alpha(h)^nx$ for all $n\in\NN$. Thus $(\ad y)^nx\neq 0$ ($n\in\NN^*$) if and only if $\alpha(h)\neq 0$ if and only if $[y,x]\neq 0$, as desired.
\end{proof}

\begin{theorem}\label{thm:adx^ny}
Let $\alpha\in\Delta^+\cup\Delta^{re}$ and $\beta\in\Delta^{im+}$. Let $x\in\g_{\alpha}$ and $y\in\g_{\beta}$ be such that $[x,y]\neq 0$. Then $(\ad y)^nx\neq 0$ for all $n\in\NN$.
\end{theorem}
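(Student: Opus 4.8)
The plan is to reduce everything to an induction driven by Theorem~\ref{thm:pre-free}, using the decisive observation that, since $\beta\in\Delta^{im+}$ gives $(\beta|\beta)\leq 0$, the sequence $n\mapsto(\alpha+n\beta|\beta)=(\alpha|\beta)+n(\beta|\beta)$ is non-increasing. Concretely, suppose $(\ad y)^nx\neq 0$ and set $z:=(\ad y)^nx\in\g_{\alpha+n\beta}$. If $(\alpha+n\beta|\beta)<0$ and $n\geq 1$, then $\alpha+n\beta\neq\beta$ (otherwise $\alpha=(1-n)\beta$, which lies in neither $\Delta^+$ nor $\Delta^{re}$), so $z$ and $y$ sit in distinct root spaces and Theorem~\ref{thm:pre-free} yields $[z,y]=(\ad y)^{n+1}x\neq 0$. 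Hence whenever $(\alpha|\beta)<0$ we finish at once by induction on $n$, the base cases $n=0,1$ being $x\neq 0$ and $[x,y]\neq 0$, because then $(\alpha+n\beta|\beta)\leq(\alpha|\beta)<0$ for all $n$. It therefore remains to treat $(\alpha|\beta)\geq 0$.

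Next I would eliminate the configurations in which $(\alpha|\beta)\geq 0$ is spurious or already covered. If $\alpha\in\Delta^{im}$, then $\alpha,\beta\in\Delta^{im+}$, so Lemma~\ref{lemma:basic_propI}(1) gives $(\alpha|\beta)\leq 0$; equality would force $\alpha,\beta$ proportional isotropic by Lemma~\ref{lemma:basic_propI}(2) (as $\alpha+\beta\in\Delta^+$), whence $[x,y]=0$ by the Heisenberg structure of Lemma~\ref{lemma:Kac_free_Lie}(2), a contradiction; so $(\alpha|\beta)<0$ and the first paragraph applies. If instead $\alpha\in\Delta^{re}$ while $\beta\in\Delta^{im+}_{is}$ is isotropic, then Lemma~\ref{lemma:isotropic+real}(2) gives the conclusion directly. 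Thus the only remaining case is $\alpha\in\Delta^{re}$, $\beta\in\Delta^{im+}_{an}$ non-isotropic, and $(\alpha|\beta)\geq 0$.

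Here $(\beta|\beta)<0$, so $(\alpha+n\beta|\beta)$ is strictly decreasing and negative for all $n>n_0$, where $n_0\geq 0$ is the largest integer with $(\alpha+n_0\beta|\beta)\geq 0$. By the induction of the first paragraph it suffices to establish $(\ad y)^nx\neq 0$ for $n\leq n_0+1$; the difficulty is confined to this finite base region, where $(\alpha+n\beta|\beta)\geq 0$ and Theorem~\ref{thm:pre-free} is unavailable. Using the $\WW^*$-action (which preserves brackets, degrees, and the form) I would first conjugate so that $\beta\in K_0$. If the resulting $\alpha$ is a positive real root, writing $\alpha=\sum_i m_i\alpha_i$ with $m_i\geq 0$ and using $(\alpha_i|\beta)\leq 0$ (as $\beta\in K_0$) forces $(\alpha|\beta)=\sum_i m_i(\alpha_i|\beta)=0$; then $n_0=0$ and the base region $\{0,1\}$ is handled by the hypotheses alone. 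This isolates exactly the subcase in which, after reduction, $\alpha$ is a \emph{negative} real root with $(\alpha|\beta)>0$ — equivalently, via a reflection $s_\gamma^*\in\WW^*$, a positive real root $\gamma$ paired with a non-$K_0$ imaginary root $\delta$ and $(\gamma|\delta)>0$.

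I expect this last subcase to be the main obstacle. In it $\alpha+\beta\in\Delta$ but may be real or imaginary, and the \emph{same} fixed vector $y$ must be shown not to annihilate the whole ascending chain across the finitely many degrees $\alpha+n\beta$ with $(\alpha+n\beta|\beta)\geq 0$. My plan is to propagate non-vanishing one step at a time: when both $\alpha+\beta$ and $\alpha+2\beta$ are real, Lemma~\ref{lemma:symm1} applies verbatim and even forces $(\alpha|\beta)=-(\beta|\beta)$; in general I would run a minimal-counterexample argument in the spirit of Lemma~\ref{lemma:a-bim+}, taking a minimal $n\geq 2$ with $(\ad y)^nx=0$, setting $z=(\ad y)^{n-1}x$ so that $[z,y]=0$, and deriving a contradiction by combining the $\mathfrak{sl}_2$-string theory for the real root $\alpha$ (Lemmas~\ref{lemma:RS} and \ref{lemma:prop36}) with the kernel-dimension comparison underlying Lemma~\ref{lemma:symm1}. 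The delicate point, and where I anticipate the real work, is controlling the interaction of $\ad e_{\pm\alpha}$ with $\ad y$ throughout the base region, so as to rule out a premature vanishing before $(\alpha+n\beta|\beta)$ turns negative and the first paragraph's induction takes over.
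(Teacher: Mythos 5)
Your first three paragraphs are correct and essentially reproduce the paper's own reductions: the induction via Theorem~\ref{thm:pre-free} when $(\alpha|\beta)<0$, the elimination of $\alpha\in\Delta^{im}$ via Lemma~\ref{lemma:basic_propI}(1,2) and Lemma~\ref{lemma:Kac_free_Lie}(2), and the isotropic case via Lemma~\ref{lemma:isotropic+real}(2). But the last case --- $\alpha\in\Delta^{re}$, $\beta\in\Delta^{im+}_{an}$, $(\alpha|\beta)\geq 0$ --- is where the entire content of the theorem lies, and there you do not give a proof. You isolate the hard configuration (after conjugating $\beta$ into $K_0$, a negative real root $\alpha$ with $(\alpha|\beta)>0$), observe that Lemma~\ref{lemma:symm1} would finish if $\alpha+\beta$ and $\alpha+2\beta$ were both real, and then only announce a plan (``I would run a minimal-counterexample argument\dots'', ``where I anticipate the real work'') for the remaining configurations. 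That is a genuine gap, not a routine verification: nothing in your sketch rules out a premature vanishing $(\ad y)^nx=0$ inside the finite base region, and the minimal-counterexample machinery of Lemma~\ref{lemma:a-bim+} does not transfer, since it relies on tools (Proposition~\ref{prop:basic1}, the $\omega$-normalisations, Lemma~\ref{lemma:basic_propI}) that require \emph{both} roots to be imaginary, whereas here one of them is real.

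The paper closes exactly this gap by a different pivot: it keeps $\alpha\in\Delta^{re+}$ (rather than normalising $\beta\in K_0$) and runs the dichotomy on $(\beta|\gamma)$, where $\gamma:=\alpha+\beta=\deg([x,y])$, instead of on $(\alpha|\beta)$. If $(\beta|\gamma)<0$, the Theorem~\ref{thm:pre-free} induction starts at $[x,y]$ rather than at $x$, so there is no base region at all. If $(\beta|\gamma)\geq 0$, one shows that the configuration you hoped for is the \emph{only} possible one: $\gamma$ cannot be imaginary, since Lemma~\ref{lemma:basic_propI}(1,2) would force $(\beta|\gamma)=0$ and $\beta+\gamma\notin\Delta$, and then Lemma~\ref{lemma:imaginary_not_connected} produces $w\in\WW$ making $\supp(w\beta)\cup\supp(w\gamma)$ disconnected, contradicting $w\gamma=w\alpha+w\beta\in\Delta$; hence $\gamma\in\Delta^{re+}$, and then $\beta+\gamma\in\Delta^+$ by Lemma~\ref{lemma:RS} together with $(\beta+\gamma|\beta+\gamma)=(\alpha|\alpha)+4(\beta|\gamma)>0$, so $\beta\pm\gamma\in\Delta^{re}$ and Lemma~\ref{lemma:symm1} concludes. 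The missing step in your outline is precisely this dichotomy on $(\beta|\alpha+\beta)$ together with the two root-system facts forcing $\alpha+\beta$ and $\alpha+2\beta$ to be real; without it, your argument is incomplete.
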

\begin{proof}
Using the $\WW^*$-action, there is no loss of generality in assuming that $\alpha\in\Delta^+$. Note also that
\begin{equation}\label{eqn:basic_case}
\textrm{$(\ad y)^nx\neq 0$ for all $n\in\NN$ if $(\alpha|\beta)<0$.}
\end{equation}
Indeed, if $(\alpha|\beta)<0$, then $(\beta|\alpha+n\beta)\leq (\beta|\alpha)<0$ for all $n\in\NN$ by (\ref{eqn:norm_roots}), so that the conclusion follows inductively on $n$ from Theorem~\ref{thm:pre-free}.

Assume first that $\alpha\in\Delta^{im+}$. Since $[x,y]\neq 0$, Lemma~\ref{lemma:Kac_free_Lie}(2) implies that $\alpha,\beta$ are not proportional isotropic roots. As $\alpha+\beta\in\Delta^+$ (again because $[x,y]\neq 0$), Lemma~\ref{lemma:basic_propI}(2) then implies that $(\alpha|\beta)<0$, so that the claim follows from (\ref{eqn:basic_case}).

Assume next that $\alpha\in\Delta^{re+}$ and that $\beta\in\Delta^{im+}_{is}$.  If $(\alpha|\beta)<0$, the claim follows from (\ref{eqn:basic_case}). On the other hand, if $(\alpha|\beta)\geq 0$, the claim follows from Lemma~\ref{lemma:isotropic+real}(2).

Finally, assume that $\alpha\in\Delta^{re+}$ and that $\beta\in\Delta^{im+}_{an}$. Set $\gamma:=\alpha+\beta=\deg([x,y])\in\Delta^+$. If $(\beta|\gamma)<0$, then $(\beta|\gamma+n\beta)<0$ for all $n\in\NN^*$ by (\ref{eqn:norm_roots}), so that the claim follows inductively on $n$ from Theorem~\ref{thm:pre-free}. We may thus assume that $(\beta|\gamma)\geq 0$. If $\gamma\in\Delta^{im+}$, then Lemma~\ref{lemma:basic_propI}(1,2) implies that $(\beta|\gamma)=0$ and $\beta+\gamma\notin\Delta$. Hence, in that case, Lemma~\ref{lemma:imaginary_not_connected} yields some $w\in\WW$ such that $\supp(w\beta)\cup\supp(w\gamma)=\supp(w\beta)\cup\supp(w\alpha)$ is not connected, contradicting the fact that $w(\alpha+\beta)\in\Delta$.
Thus $\gamma\in\Delta^{re+}$. Hence $\beta+\gamma\in\Delta^+$ (this is because $\beta-\gamma\in\Delta^{re}$ and $\beta\in\Delta^{im+}$, see Lemma~\ref{lemma:RS}), and since
$$(\beta+\gamma|\beta+\gamma)=(\beta-\gamma|\beta-\gamma)+4(\beta|\gamma)=(\alpha|\alpha)+4(\beta|\gamma)>0$$
by (\ref{eqn:norm_roots}), we conclude that $\beta+\gamma\in\Delta^{re+}$ by (\ref{eqn:norm_roots}). But then $\beta\pm\gamma\in\Delta^{re}$, so that $(\ad y)^nx\neq 0$ for all $n\in\NN$ by Lemma~\ref{lemma:symm1}, as desired. 
\end{proof}

\begin{lemma}\label{lemma:xy_non_solvable}
Let $\alpha\in\Delta^{im}$ and $\beta\in\Delta$ be such that $(\alpha|\beta)<0$. Let $x\in\g_{\alpha}$ and $y\in\g_{\beta}$ be nonzero and such that $\CC x\neq \CC y$. Then the subalgebra of $\g(A)$ generated by $x$ and $y$ is not solvable.
\end{lemma}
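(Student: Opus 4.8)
The plan is to show that the subalgebra $\LLL:=\langle x,y\rangle$ of $\g(A)$ contains a free Lie algebra of rank $2$, which is never solvable; since a subalgebra of a solvable Lie algebra is solvable, this forces $\LLL$ itself to be non-solvable. The whole task is therefore to produce two homogeneous elements of $\LLL$ satisfying the hypotheses of Proposition~\ref{prop:basic1}.

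I would begin with two normalising reductions. Applying the Chevalley involution $\omega$, which preserves the form $(\cdot|\cdot)$ and maps $\langle x,y\rangle$ isomorphically onto $\langle\omega(x),\omega(y)\rangle$, I may assume $\alpha\in\Delta^{im+}$. Since $\beta\in\Delta^{im-}$ would force $(\alpha|\beta)\geq 0$ by Lemma~\ref{lemma:basic_propI}(1), we have $\beta\in\Delta^{im+}\cup\Delta^{re}$; using a suitable $w^*\in\WW^*$ (which stabilises $\Delta^{im+}$, preserves the form, and acts by an isomorphism on the generated subalgebra) I may further assume $\beta\in\Delta^+$. Next I reduce to the case of two imaginary, non-proportional roots. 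If $\beta$ is real, I replace $y$ by $y':=(\ad x)^N y$ for $N\gg 0$: by Theorem~\ref{thm:adx^ny} this is nonzero, its degree $\nu:=N\alpha+\beta$ lies in $\Delta^{im+}$ for large $N$ (its norm tends to $-\infty$) and is non-proportional to $\alpha$ (a real root cannot be a multiple of an imaginary one), while $(\alpha|\nu)=N(\alpha|\alpha)+(\alpha|\beta)<0$; as $\langle x,y'\rangle\subseteq\LLL$, it suffices to treat $(x,y')$. Finally, if $\alpha,\beta\in\Delta^{im+}$ are proportional, then $(\alpha|\alpha)<0$ and $\g_{\NN^*\alpha}$ is a free Lie algebra by Lemma~\ref{lemma:Kac_free_Lie}(1); as $[x,y]\neq 0$ by Theorem~\ref{thm:pre-free}, the subalgebra $\langle x,y\rangle$ is a nonzero non-abelian subalgebra of a free Lie algebra, hence free of rank $2$ by the Shirshov--Witt theorem, and non-solvable. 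This leaves the core case: $\alpha,\beta\in\Delta^{im+}$ non-proportional with $(\alpha|\beta)<0$.

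For the core case, Theorem~\ref{thm:adx^ny} shows that $u_m:=(\ad y)^m x\in\g_{\alpha+m\beta}$ and $w_n:=(\ad x)^n y\in\g_{n\alpha+\beta}$ are nonzero for all $m,n$, and an induction using Lemma~\ref{lemma:basic_propI}(3) together with $(\alpha|\alpha),(\beta|\beta)\leq 0$ gives $\alpha+m\beta,\ n\alpha+\beta\in\Delta^{im+}$ for all $m,n\geq 1$. Expanding, $(\alpha+m\beta\,|\,n\alpha+\beta)<0$ for all $m,n\geq 1$, as every term is $\leq 0$ and the cross term $(1+mn)(\alpha|\beta)$ is $<0$. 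To apply Proposition~\ref{prop:basic1} to the pair $(u_m,w_n)$ it then remains only to arrange $[\omega(w_n),u_m]=0$, and I would secure this for suitable $m,n\geq 2$ by forcing the degree difference
$$\delta:=(\alpha+m\beta)-(n\alpha+\beta)=-(n-1)\alpha+(m-1)\beta$$
to lie outside $\Delta$, so that $\g_\delta=\{0\}$. Writing $\alpha=\sum_i a_i\alpha_i$ and $\beta=\sum_i b_i\alpha_i$ with $a_i,b_i\geq 0$, non-proportionality means the vectors $(a_i)$ and $(b_i)$ are not proportional, so one can choose integers $p:=n-1,\ q:=m-1\geq 1$ for which the entries $qb_i-pa_i$ are neither all $\geq 0$ nor all $\leq 0$: take $q\gg p$ when $\supp(\alpha)\not\subseteq\supp(\beta)$, take $p\gg q$ when $\supp(\beta)\not\subseteq\supp(\alpha)$, and otherwise choose the ratio $q/p$ strictly between $\min_{i}a_i/b_i$ and $\max_{i}a_i/b_i$ over $i\in\supp(\beta)$. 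For such $m,n$ one has $\delta\notin Q_+\cup Q_-$, hence $\delta\notin\Delta$ and $[\omega(w_n),u_m]\in\g_\delta=\{0\}$.

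Proposition~\ref{prop:basic1} would then yield that $u_m$ and $w_n$ generate a free Lie algebra of rank $2$ inside $\LLL$, and the non-solvability of $\LLL$ follows as explained. The hard part, I expect, is precisely this last step: establishing the hypothesis of Proposition~\ref{prop:basic1} for a concrete pair. Everything else is bookkeeping, but the support/ratio argument guaranteeing a mixed-sign (hence non-root) degree difference $\delta$ is the genuinely delicate point on which the whole proof turns.
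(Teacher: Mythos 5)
Your proposal is correct in its overall architecture, but it takes a genuinely different route from the paper's. The paper never invokes Proposition~\ref{prop:basic1} here: after the same reduction by $\omega$, it gets $[x,y]\neq 0$ from Theorem~\ref{thm:pre-free} and $(\ad x)^ny\neq 0$ from Theorem~\ref{thm:adx^ny}, picks $r\in\NN^*$ with $(r\alpha+\beta\,|\,r\alpha+\beta)<0$, sets $y_k:=(\ad x)^{rk}y$, and proves by induction that every balanced nested bracket $\bra y_{i_1},\dots,y_{i_{2^n}}\ket$ with $i_1<\dots<i_{2^n}$ is nonzero: the two halves of such a bracket have distinct degrees $ri_{(1)}\alpha+2^{n-1}\beta$ and $ri_{(2)}\alpha+2^{n-1}\beta$ whose pairing is at most $2^{2n-2}(r\alpha+\beta\,|\,r\alpha+\beta)<0$, so Theorem~\ref{thm:pre-free} applies at each step. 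This exhibits a nonzero element of each term of the derived series directly, uniformly, with no case distinctions (real vs.\ imaginary $\beta$, proportional vs.\ non-proportional roots) and no need to make any ``opposite-degree'' bracket vanish. Your route instead reduces to two non-proportional roots in $\Delta^{im+}$ and manufactures a free rank-$2$ subalgebra via Proposition~\ref{prop:basic1}, the crux being your support/ratio argument forcing $-(n-1)\alpha+(m-1)\beta$ to have coefficients of mixed sign; I checked that argument in all three cases ($\supp(\alpha)\not\subseteq\supp(\beta)$, $\supp(\beta)\not\subseteq\supp(\alpha)$, equal supports) and it works, as do your preliminary reductions via $\omega$, $\WW^*$ and Theorem~\ref{thm:adx^ny}. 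What your approach buys is a stronger conclusion (the subalgebra $\langle x,y\rangle$ contains a free Lie algebra of rank $2$, not merely a non-terminating derived series); what the paper's buys is economy and uniformity, using only Theorems~\ref{thm:pre-free} and~\ref{thm:adx^ny} and no external facts.

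One imprecision you should repair: in the proportional case you place both $x$ and $y$ inside $\g_{\NN^*\alpha}$, but proportional imaginary roots need not be integer multiples of one another (e.g.\ $\alpha=2\gamma$ and $\beta=3\gamma$ for a non-isotropic $\gamma\in\Delta^{im+}$, a configuration compatible with $(\alpha|\beta)<0$), in which case $\g_\beta\not\subseteq\g_{\NN^*\alpha}$. The fix is routine: take $\gamma$ to be the primitive lattice point on the ray $\QQ_{>0}\alpha$; writing $\alpha=w\alpha'$ with $\alpha'\in K_0$ of connected support, the element $w^{-1}\gamma=\alpha'/p$ again lies in $K_0$ with connected support, so $\gamma\in\Delta^{im+}$ by (\ref{eqn:Delta_im+}), and then $\alpha,\beta\in\NN^*\gamma$, $(\gamma|\gamma)<0$, and Lemma~\ref{lemma:Kac_free_Lie}(1) applies to $\g_{\NN^*\gamma}$, after which your Shirshov--Witt argument goes through. (The paper itself makes the same implicit reduction of the proportional case in the proof of Lemma~\ref{lemma:a-bim+}, so this is a shared, and easily filled, ellipsis rather than a flaw in your strategy; note also that Shirshov--Witt is an external result the paper's own proof avoids.)
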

\begin{proof}
Up to using the action of $\omega$, we may assume that $\alpha\in\Delta^{im+}$. Thus $\beta\notin\Delta^{im-}$ by Lemma~\ref{lemma:basic_propI}(1). Since $[x,y]\neq 0$ by Theorem~\ref{thm:pre-free}, it then follows from Theorem~\ref{thm:adx^ny} that $(\ad x)^ny\neq 0$ for all $n\in\NN$.

We define a bracket map $\bra\cdot\ket\co \bigcup_{n\in\NN}\g(A)^{2^n}\to\g(A)$ recursively on $n\in\NN$ (where $\g(A)^m:=\g(A)\times\dots\times\g(A)$, $m$ factors), by setting $\bra x \ket :=x$ for all $x\in\g(A)$, and 
$$\bra x_1,\dots,x_{2^n}\ket:=\big[\bra x_1,\dots,x_{2^{n-1}}\ket ,\bra x_{2^{n-1}+1},\dots,x_{2^n}\ket\big]\quad\textrm{for all  $x_1,\dots,x_{2^n}\in\g(A)$.}$$
Since $(\alpha|\alpha)\leq 0$ by (\ref{eqn:norm_roots}) and $(\alpha|\beta)<0$  by hypothesis, there exists some $r\in\NN^*$ such that
$$(r\alpha+\beta| r\alpha+\beta)<0.$$
 For each $n\in\NN$, set $y_n:=(\ad x)^{rn}y\neq 0$. We now show inductively on $n$ that
$$z_n:=\bra y_{i_1},\dots,y_{i_{2^n}}\ket\neq 0\quad\textrm{for all $n\in\NN$ and all $i_1<i_2<\dots<i_{2^n}$},$$
so that the subalgebra generated by $x$ and $y$ is indeed not solvable. For $n=0$, this is clear. Let now $n\in\NN^*$ and $i_1,\dots,i_{2^n}\in\NN^*$ with $i_1<i_2<\dots<i_{2^n}$. By induction hypothesis, 
$$z_n^1:=\bra y_{i_1},\dots,y_{i_{2^{n-1}}}\ket\neq 0\quad\textrm{and}\quad z_n^2:=\bra y_{i_{2^{n-1}+1}},\dots,y_{i_{2^n}}\ket\neq 0.$$
Note that
$$i_{(1)}:=i_1+\dots+i_{2^{n-1}}\geq 2^{n-1}\quad\textrm{and}\quad i_{(2)}:=i_{2^{n-1}+1}+\dots+i_{2^n}\geq 2^{n-1}.$$ Since $(\alpha|\alpha)\leq 0$ and $(\alpha|\beta)<0$, we deduce that
$$\big(\deg(z_n^1) \big| \deg(z_n^2)\big)=\big(ri_{(1)}\alpha+2^{n-1}\beta \big| ri_{(2)}\alpha+2^{n-1}\beta\big)\leq 2^{2n-2}(r\alpha+\beta| r\alpha+\beta)<0.$$
Since, moreover, $\deg(z_n^1)\neq\deg(z_n^2)$ (because $i_{(1)}<i_{(2)}$), Theorem~\ref{thm:pre-free} implies that $z_n=[z_n^1,z_n^2]\neq 0$, thus completing the induction step.
\end{proof}

\begin{lemma}\label{lemma:solvable_hh}
Let $\LLL$ be a solvable graded subalgebra of $\g(A)$. Then $[\hh\cap \LLL^1,\LLL]=\{0\}$.
\end{lemma}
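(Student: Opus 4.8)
The plan is to reduce the identity $[\hh\cap\LLL^1,\LLL]=\{0\}$ to a vanishing condition on the invariant form and then to invoke Lemma~\ref{lemma:xy_non_solvable}. Since $\LLL$ is graded, so is $\LLL^1=[\LLL,\LLL]$, and its degree-$0$ component $\hh\cap\LLL^1$ equals $\sum_{\alpha\in\Delta}[\LLL\cap\g_\alpha,\LLL\cap\g_{-\alpha}]$ (the contribution of $[\LLL\cap\hh,\LLL\cap\hh]$ vanishing as $\hh$ is abelian). By (\ref{eqn:prop_inv_form}) each summand lies in $\CC\alpha^\sharp$, so I call $\alpha\in\Delta$ \emph{contributing} if there exist nonzero $x\in\LLL\cap\g_\alpha$ and $z\in\LLL\cap\g_{-\alpha}$ with $[x,z]\neq 0$, and then $\hh\cap\LLL^1$ is spanned by the elements $\alpha^\sharp$ with $\alpha$ contributing. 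Since $[\alpha^\sharp,y]=(\beta|\alpha)y$ for homogeneous $y\in\LLL\cap\g_\beta$ and $\LLL$ is spanned by its homogeneous elements, it suffices to prove that $(\alpha|\beta)=0$ whenever $\alpha$ is contributing and $\LLL\cap\g_\beta\neq\{0\}$ (the case $\beta=0$ being immediate).

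First I would determine the contributing roots. Fix a contributing $\alpha$ with witnesses $x,z$ and set $h:=[x,z]=c\alpha^\sharp$ for some $c\neq 0$. From $[\alpha^\sharp,x]=(\alpha|\alpha)x$ and $[\alpha^\sharp,z]=-(\alpha|\alpha)z$ one checks that $V:=\CC x\oplus\CC z\oplus\CC h$ is closed under the bracket, so it is precisely the subalgebra generated by $x$ and $z$. If $(\alpha|\alpha)\neq 0$, then $[V,V]=V$, so $V$ (a copy of $\sll_2(\CC)$) is a non-solvable subalgebra of $\LLL$, contradicting solvability. Hence $(\alpha|\alpha)=0$ for every contributing $\alpha$, and by (\ref{eqn:norm_roots}) such an $\alpha$ is an isotropic imaginary root. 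In particular this already settles the case $\beta\in\{\alpha,-\alpha\}$, for which $(\alpha|\beta)=\pm(\alpha|\alpha)=0$.

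For the remaining $\beta$ I would argue by contradiction. Suppose $\alpha$ is contributing and $(\alpha|\beta)\neq 0$ for some $\beta$ admitting a nonzero $y\in\LLL\cap\g_\beta$. Replacing $\alpha$ by $-\alpha$ and swapping $x,z$ if needed (note $-\alpha$ is again contributing and isotropic imaginary), I may assume $(\alpha|\beta)<0$. By the previous paragraph $\beta\neq\pm\alpha$, so $x$ and $y$ lie in distinct root spaces and in particular $\CC x\neq\CC y$. Now $\alpha\in\Delta^{im}$, $\beta\in\Delta$ and $(\alpha|\beta)<0$ are exactly the hypotheses of Lemma~\ref{lemma:xy_non_solvable}, which shows that the subalgebra generated by $x,y\in\LLL$ is not solvable; as this subalgebra is contained in $\LLL$, this contradicts the solvability of $\LLL$. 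Therefore $(\alpha|\beta)=0$ in all cases, and the lemma follows.

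The main point to get right is the identification of $\hh\cap\LLL^1$ as exactly the span of the contributing elements $\alpha^\sharp$, after which everything rests on recognising that two distinct obstructions rule out every nonzero value of $(\alpha|\beta)$: an internal $\sll_2(\CC)$ when $(\alpha|\alpha)\neq 0$, and the non-solvability criterion of Lemma~\ref{lemma:xy_non_solvable} when $(\alpha|\beta)<0$ with $\alpha$ imaginary. I do not expect any serious difficulty beyond this bookkeeping; the remaining verifications (closure of $V$ and the bracket formula $[\alpha^\sharp,y]=(\beta|\alpha)y$) are routine.
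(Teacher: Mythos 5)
Your proof is correct and follows essentially the same route as the paper: decompose $\hh\cap\LLL^1$ via the grading into brackets $[\LLL\cap\g_{\alpha},\LLL\cap\g_{-\alpha}]\subseteq\CC\alpha^{\sharp}$, rule out non-isotropic contributing roots via an embedded copy of $\sll_2(\CC)$, and kill the remaining pairings with Lemma~\ref{lemma:xy_non_solvable}. The only cosmetic difference is that you normalise to $(\alpha|\beta)<0$ by replacing $\alpha$ with $-\alpha$, whereas the paper applies Lemma~\ref{lemma:xy_non_solvable} to both $x_{\beta_r}$ and $x_{-\beta_r}$ to get $(\alpha|\pm\beta_r)\geq 0$; these are the same argument.
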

\begin{proof}
Assume for a contradiction that there exists some $h\in\hh\cap [\LLL,\LLL]$ and some $x\in\LLL\cap\g_{\alpha}$ ($\alpha\in\Delta$) such that $[h,x]\neq\{0\}$. Then $h$ is of the form $$h=\sum_{i=1}^m[x_{-\beta_i},x_{\beta_i}]$$ for some $\beta_1,\dots,\beta_m\in\Delta^+$ and some $x_{\pm\beta_i}\in\LLL\cap\g_{\pm\beta_i}$ with $[x_{-\beta_i},x_{\beta_i}]\neq 0$. Note that $\beta_1,\dots,\beta_m\in\Delta_{is}^{im+}$, for otherwise $\LLL$ would contain a copy $\CC x_{-\beta_i}\oplus\CC \beta_i^{\sharp}\oplus\CC x_{\beta_i}$ of $\sll_2(\CC)$ (see (\ref{eqn:prop_inv_form})). Since 
$$0\neq [h,x]\in\sum_{i=1}^m\CC[\beta_i^{\sharp},x]=\sum_{i=1}^m\CC(\beta_i|\alpha)x$$
by (\ref{eqn:prop_inv_form}), there exists some $r\in\{1,\dots,m\}$ such that $(\beta_r|\alpha)\neq 0$. In particular, $\alpha\neq\pm\beta_r$. Since $x_{\pm\beta_r}$ and $x$ generate a solvable subalgebra, Lemma~\ref{lemma:xy_non_solvable} then implies that $(\alpha|\beta_r)\geq 0$ and $(\alpha|-\beta_r)\geq 0$, yielding the desired contradiction.
\end{proof}

\begin{lemma}\label{lemma:solvable_affine}
Let $\alpha,\gamma\in\Delta^{re}$ be such that $\beta:=\alpha+\gamma\in\Delta^{im+}$ and $(\alpha|\beta)=0=(\gamma|\beta)$. Then the subalgebra generated by $e_{\alpha},e_{\gamma}$ is not solvable and $[e_{\alpha},[e_{\alpha},e_{\gamma}]]\neq 0$.
\end{lemma}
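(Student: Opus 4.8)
The plan is to establish the bracket identity first by $\sll_2$-theory, and then to feed it into a non-solvability argument after reducing to an affine subalgebra. I begin by recording the numerical consequences of the hypotheses. Expanding $(\alpha|\beta)=0=(\gamma|\beta)$ with $\beta=\alpha+\gamma$ gives $(\alpha|\gamma)=-(\alpha|\alpha)=-(\gamma|\gamma)$; setting $c:=(\alpha|\alpha)=(\gamma|\gamma)>0$ (positive by \eqref{eqn:norm_roots}, as $\alpha,\gamma\in\Delta^{re}$), one gets $\la\gamma,\alpha^{\vee}\ra=2(\gamma|\alpha)/(\alpha|\alpha)=-2$ by \eqref{eqn:coroot}, and $(\beta|\beta)=c-2c+c=0$, so $\beta\in\Delta^{im+}_{is}$. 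More generally, for $\mu=m\alpha+n\gamma$ and $\mu'=m'\alpha+n'\gamma$ one computes $(\mu|\mu')=c(m-n)(m'-n')$; in particular $(\gamma+n\alpha|\gamma+n\alpha)=c(n-1)^2$, so along the $\alpha$-string through $\gamma$ the \emph{only} imaginary member is $\gamma+\alpha=\beta$ (at $n=1$), every other member being real with one-dimensional root space.

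For $[e_{\alpha},[e_{\alpha},e_{\gamma}]]\neq0$ I would run the $\sll_2$-theory of $\g_{(\alpha)}\cong\sll_2(\CC)$ on $M:=\bigoplus_{n}\g_{\gamma+n\alpha}$, as in the proof of Lemma~\ref{lemma:prop36}: here $\alpha^{\vee}$ acts on $\g_{\gamma+n\alpha}$ by the weight $\la\gamma+n\alpha,\alpha^{\vee}\ra=-2+2n$. By Lemma~\ref{lemma:RS} and $\la\gamma,\alpha^{\vee}\ra=-2$ the string runs from $\gamma-p\alpha$ to $\gamma+(p+2)\alpha$, so the top weight is $2p+2\geq2$; since every weight space other than $\g_{\beta}$ (weight $0$) is one-dimensional, $M$ is one copy of the irreducible module of highest weight $2p+2$ together with trivial modules, the latter living only in weight $0$. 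As $e_{\gamma}$ has weight $-2\neq0$ it lies in the nontrivial summand, and $-2<2p+2$ lets $\ad e_{\alpha}$ be applied to it twice without vanishing, giving $[e_{\alpha},[e_{\alpha},e_{\gamma}]]\neq0$ in $\g_{2\alpha+\gamma}=\g_{\alpha+\beta}$; by symmetry $[e_{\gamma},[e_{\gamma},e_{\alpha}]]\neq0$ too. The identical computation applied to any nonzero vector of a charge-$(-1)$ space $\g_{m\alpha+(m+1)\gamma}$ (whose $\alpha^{\vee}$-pairing is again $-2$) shows that $(\ad e_{\alpha})^2$ is injective there, a fact I will use below.

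For non-solvability, set $z:=[e_{\alpha},e_{\gamma}]\in\g_{\beta}\setminus\{0\}$ (nonzero by Lemma~\ref{lemma:prop36}(2)). Since $[e_{\alpha},z]\neq0$ and $(\alpha|\beta)=0\geq0$ with $\beta$ isotropic, Lemma~\ref{lemma:isotropic+real}(2) (or Theorem~\ref{thm:adx^ny}) gives $E_n:=(\ad z)^n e_{\alpha}\neq0$ in $\g_{(n+1)\alpha+n\gamma}$, and symmetrically $F_n:=(\ad z)^n e_{\gamma}\neq0$ in $\g_{n\alpha+(n+1)\gamma}$, all lying in $\LLL:=\la e_{\alpha},e_{\gamma}\ra$. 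The one robust bracket between them is $[E_j,F_k]$: the factors have opposite charge $m-n=\pm1$, so $(\deg E_j\mid\deg F_k)=-c<0$ and Theorem~\ref{thm:pre-free} forces $[E_j,F_k]\neq0$ in $\g_{(j+k+1)\beta}$. To keep the derived series alive one must regenerate charged (real) elements out of the charge-$0$ spaces $\g_{m\beta}$, i.e.\ compute brackets $[\g_{m\beta},\g_{\mathrm{re}}]$; these escape Theorem~\ref{thm:pre-free} because $\beta$ is orthogonal to all of $\ZZ\alpha+\ZZ\gamma$. I would circumvent this by reducing to the affine case: applying Lemma~\ref{lemma:isotropic+real}(1) to $(\alpha,\beta)$ and $(\gamma,\beta)$, and using that $\alpha+\beta,\gamma+\beta\in\Delta$ have connected support, I obtain $w\in\WW$ with $w\beta\in K_0$ and $\supp(w\alpha),\supp(w\gamma)\subseteq J:=\supp(w\beta)$, a subdiagram of affine type by \eqref{eqn:isotropic}. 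Replacing $(\alpha,\gamma)$ by $(w\alpha,w\gamma)$ via the automorphism $w^*$ (harmless for solvability), all of $\alpha,\gamma,\beta$ become roots of the affine Kac--Moody subalgebra $\g(A_J)$, realized as a (twisted) loop algebra over a simple finite-dimensional $\mathring{\g}$. There $\beta$ is a positive multiple of the null root, the finite parts of $\alpha,\gamma$ are opposite roots $\pm\mathring{\alpha}_0$, and $e_{\alpha},e_{\gamma}$ take the form $X_{+}\otimes t^{a},X_{-}\otimes t^{g}$ with $m_0:=a+g$ the $\delta$-level of $\beta$. A direct loop computation then shows the three families reproduce themselves: $[X_{+}\otimes t^{\cdot},X_{-}\otimes t^{\cdot}]=H\otimes t^{\cdot}$ and $[H\otimes t^{\cdot},X_{\pm}\otimes t^{\cdot}]=\pm\mathring{\alpha}_0(H)\,X_{\pm}\otimes t^{\cdot}$ with $\mathring{\alpha}_0(H)\neq0$, so each $\LLL^{(k)}$ still contains vectors $H\otimes t^{N}$ of arbitrarily large $t$-degree; hence $\LLL$ is not solvable.

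The genuine obstacle is precisely this last point: controlling brackets of the isotropic spaces $\g_{m\beta}$ against real root spaces. The available tools either require a strictly negative pairing (Theorem~\ref{thm:pre-free}) or only amplify an already nonzero bracket (Lemma~\ref{lemma:isotropic+real}(2), Theorem~\ref{thm:adx^ny}), whereas here the relevant pairing is $0$ and, when $\dim\g_{m\beta}>1$, a carelessly produced element of $\g_{m\beta}$ may be killed by $\ad e_{\alpha}$. The affine reduction dissolves the difficulty because in the loop model the regeneration of charged elements is governed by the perfectness of the finite $\sll_2$-triple $\la X_{+},H,X_{-}\ra$. A fully intrinsic alternative would track through the derived series a \emph{bi-good} element of $\g_{m\beta}$ (one annihilated by neither $\ad e_{\alpha}$ nor $\ad e_{\gamma}$), propagating goodness by the injectivity of $(\ad e_{\alpha})^2,(\ad e_{\gamma})^2$ on charge-$\mp1$ spaces proved above; but verifying that $[E_j,F_k]$ remains bi-good is delicate exactly because charge-$\pm2$ root spaces need not vanish in $\g(A)$ — which is what passing to $\g(A_J)$ eliminates.
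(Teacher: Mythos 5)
Your $\sll_2$-theoretic argument for $[e_{\alpha},[e_{\alpha},e_{\gamma}]]\neq 0$ is correct, and it is a genuinely different (and more self-contained) route than the paper's: the paper obtains this non-vanishing by verifying Serre-type relations via root strings (Lemma~\ref{lemma:RS}) and embedding the positive part of a rank-$2$ affine algebra ($A_1^{(1)}$ when $\gamma-\alpha\notin\Delta$, $A_2^{(2)}$ when $\gamma-\alpha\in\Delta$) into $\g(A)$, whereas your decomposition of $\bigoplus_n\g_{\gamma+n\alpha}$ into one irreducible of highest weight $2p+2$ plus trivial summands concentrated in weight $0$ is short and uses only complete reducibility. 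Your reduction of the non-solvability statement to the affine subalgebra on $J=\supp(w\beta)$ via Lemma~\ref{lemma:isotropic+real}(1) is also sound.

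The gap is in the ``direct loop computation'', which is valid only for \emph{untwisted} affine algebras. In a twisted loop algebra $\LLL(\mathring{\g},\mu)$, $\mu$ of order $r>1$, a real root vector is $t^{a}\otimes X$ where $X$ is a weight vector for the \emph{small} Cartan $\mathring{\hh}_{\bar{0}}=\mathring{\hh}^{\mu}$ (not a root vector of $\mathring{\g}$), and the isotropic root spaces are $\g_{m\delta}=t^{m}\otimes\mathring{\hh}_{\bar{m}}$, where $\mathring{\hh}_{\bar{m}}$ is the $e^{2\pi i m/r}$-eigenspace of $\mu$ on $\mathring{\hh}$. If the level $m_0$ of $\beta$ is not divisible by $r$, then $H:=[X_+,X_-]$ lies in $\mathring{\hh}_{\bar{m}_0}\neq\mathring{\hh}_{\bar{0}}$, and such an $H$ does \emph{not} act diagonally: writing $X_+$ as a combination of $\mathring{\hh}$-root vectors $E_{\theta},E_{\mu\theta},\dots$, one finds (say for $r=2$) that $[H,X_+]$ is proportional to $E_{\theta}-E_{\mu\theta}$ when $X_+\sim E_{\theta}+E_{\mu\theta}$, i.e.\ it lands in a \emph{different} $\mu$-eigenspace and is not a multiple of $X_+$; the scalar $\mathring{\alpha}_0(H)$ in your formula is not even the relevant quantity. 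Consequently ``the three families reproduce themselves'' fails: the elements $[E_j,F_k]=t^{\cdot}\otimes H_{jk}$ involve \emph{varying} Cartan elements $H_{jk}$, and nothing in your argument shows that their brackets with the $E_{j'},F_{k'}$ are nonzero (Lemma~\ref{lemma:isotropic+real}(2) and Theorem~\ref{thm:adx^ny} only amplify a bracket already known to be nonzero, and Theorem~\ref{thm:pre-free} is unavailable since the relevant pairing is $0$). This case is not hypothetical: it is exactly the paper's case $\gamma-\alpha\in\Delta$, which forces twisted type --- e.g.\ $\alpha=\alpha_1$, $\gamma=\alpha_1+\alpha_2$, $\beta=\delta$ in $A_2^{(2)}$, where $z=[e_{\alpha},e_{\gamma}]\in\g_{\delta}=t\otimes\mathring{\hh}_{\bar{1}}$ is not $\ad$-diagonalisable. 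The paper repairs precisely this point: instead of $z\in\g_{\delta}$ it manufactures $L_8=[L_1,[L_3,[L_3,L_1]]]\in\g_{2\delta}=t^{2}\otimes\mathring{\hh}_{\bar{0}}$, an element whose level \emph{is} divisible by $r$ and which therefore acts diagonally on $L_1$ and $L_7$, and then runs the derived-series argument on $\{L_1,L_7,L_8\}$, checking the needed non-vanishing by explicit computation in the $A_2^{(2)}$ model. Your proof covers the untwisted case completely, but needs an analogous extra step (producing and controlling an element of $\g_{r\ZZ\delta}$ inside the subalgebra) before it covers the twisted one.
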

\begin{proof}
Let $\LLL$ denote the subalgebra of $\g(A)$ generated by $e_{\alpha},e_{\gamma}$. By assumption, we have
$$(\alpha|\alpha)=-(\alpha|\gamma)=(\gamma|\gamma).$$
In particular,
\begin{equation}\label{eqn:rel-2}
\la \gamma,\alpha^{\vee}\ra =\frac{2(\gamma|\alpha)}{(\alpha|\alpha)}=-2=\frac{2(\alpha|\gamma)}{(\gamma|\gamma)}=\la\alpha,\gamma^{\vee}\ra
\end{equation}
by (\ref{eqn:coroot}). 

Assume first that $\gamma-\alpha\notin\Delta$, so that
\begin{equation}\label{eqn:rel11}
[e_{\alpha},e_{-\gamma}]=0=[e_{-\alpha},e_{\gamma}].
\end{equation}
Then $S(\alpha,\gamma)=\{\gamma,\gamma+\alpha,\gamma+2\alpha\}$ and $S(\gamma,\alpha)=\{\alpha,\alpha+\gamma,\alpha+2\gamma\}$ by Lemma~\ref{lemma:RS}, so that
\begin{equation}\label{eqn:rel12}
(\ad e_{\alpha})^3e_{\gamma}=(\ad e_{-\alpha})^3e_{-\gamma}=(\ad e_{\gamma})^3e_{\alpha}=(\ad e_{-\gamma})^3e_{-\alpha}=0.
\end{equation}
Consider the GCM $B=\begin{psmallmatrix}2&-2\\ -2 &2\end{psmallmatrix}$. Denoting by $e_1^B,e_2^B$ and $f_1^B,f_2^B$ the Chevalley generators of $\g'(B)\subseteq\g(B)$, the assignment
$$e_1^B\mapsto e_{\alpha}, \quad e_2^B\mapsto e_{\gamma}, \quad f_1^B\mapsto e_{-\alpha}, \quad\textrm{and}\quad  f_2^B\mapsto e_{-\gamma}$$
defines a Lie algebra morphism $\phi\co\g'(B)\to\g(A)$ (see \S\ref{subsection:KMA}), as follows from the relations (\ref{eqn:sl2_alpha}), (\ref{eqn:rel-2}), (\ref{eqn:rel11}) and (\ref{eqn:rel12}). Since $\g'(B)$ is simple modulo center (see \S\ref{subsection:KMA}), the restriction of $\phi$ to $\nn^+(B)$ then defines an isomorphism $\nn^+(B)\cong \LLL$. In particular, $[e_{\alpha},[e_{\alpha},e_{\gamma}]]\neq 0$. Moreover, it follows from (\ref{eqn:presentation_nn+}) that the assignment
$$e_1^B\mapsto\begin{psmallmatrix}0&1\\ 0 &0\end{psmallmatrix}\quad\textrm{and}\quad e_2^B\mapsto\begin{psmallmatrix}0&0\\ -1 &0\end{psmallmatrix}$$
defines a surjective Lie algebra morphism $\nn^+(B)\to\sll_2(\CC)$, and hence $\LLL$ is not solvable.

Assume next that $\gamma-\alpha\in\Delta$. Then $\gamma-\alpha\in\Delta^{re}$ and $\gamma-2\alpha\notin\Delta$ by Lemma~\ref{lemma:RS}, so that 
\begin{equation}\label{eqn:rel21}
[e_{\alpha},e_{\alpha-\gamma}]=0=[e_{-\alpha},e_{\gamma-\alpha}].
\end{equation}
Note also that
\begin{equation}\label{eqn:rel-23}
\la \gamma-\alpha,\alpha^{\vee}\ra=-4\quad\textrm{and}\quad \la \alpha,(\gamma-\alpha)^{\vee}\ra=\frac{2(\alpha|\gamma-\alpha)}{(\gamma-\alpha|\gamma-\alpha)}=\frac{4(\alpha|\gamma)}{-4(\alpha|\gamma)}=-1
\end{equation}
by (\ref{eqn:rel-2}). Lemma~\ref{lemma:RS} then implies that $S(\alpha,\gamma-\alpha)=\{\gamma+n\alpha \ | \ -1\leq n\leq 3\}$ and that $S(\gamma-\alpha,\alpha)=\{\alpha,\gamma\}$, and hence
\begin{equation}\label{eqn:rel22}
(\ad e_{\alpha})^5e_{\gamma-\alpha}=(\ad e_{-\alpha})^5e_{\alpha-\gamma}=(\ad e_{\gamma-\alpha})^2e_{\alpha}=(\ad e_{\alpha-\gamma})^2e_{-\alpha}=0.
\end{equation}
Consider the GCM $C=\begin{psmallmatrix}2&-4\\ -1 &2\end{psmallmatrix}$. Denoting by $e_1^C,e_2^C$ and $f_1^C,f_2^C$ the Chevalley generators of $\g'(C)\subseteq\g(C)$, the assignment
$$e_1^C\mapsto e_{\alpha}, \quad e_2^C\mapsto e_{\gamma-\alpha}, \quad f_1^C\mapsto e_{-\alpha}, \quad\textrm{and}\quad  f_2^C\mapsto e_{\alpha-\gamma}$$
defines a Lie algebra morphism $\psi\co\g'(C)\to\g(A)$ (see \S\ref{subsection:KMA}), as follows from the relations (\ref{eqn:sl2_alpha}), (\ref{eqn:rel21}), (\ref{eqn:rel-23}) and (\ref{eqn:rel22}). As before, the restriction of $\psi$ to $\nn^+(C)$ is injective. Since $\CC e_{\gamma}=\CC [e_{\alpha},e_{\gamma-\alpha}]$ by Lemma~\ref{lemma:rerere}, this implies that $\psi$ restricts to an isomorphism $\LLL'\cong\LLL$ from the Lie subalgebra $\LLL'$ of $\nn^+(C)$ generated by $e_1^C,[e_1^C,e_2^C]$ to $\LLL$. In particular, $[e_{\alpha},[e_{\alpha},e_{\gamma}]]\neq 0$. Moreover, $\LLL'$ (and hence $\LLL$) is not solvable: in the notations of \cite[Exercises~8.15 and 8.16]{Kac}, $\LLL'$ can be identified with the subalgebra of the twisted loop algebra $\LLL(\sll_3(\CC),\mu)$ generated by $\CC L_1=\CC e_1^C$ and $\CC L_3=\CC [e_1^C,e_2^C]$ (see also \cite[Example~5.27]{KMGbook}). Straightforward computations using \cite[Exercises~8.16]{Kac} show that $\LLL'$ contains $\CC L_7=\CC [L_3,[L_3,L_1]]$ and $\CC L_8=\CC [L_1,L_7]$, and that 
$$[L_1,L_7]=L_8, \quad [L_8,L_1]=t^2L_1, \quad [L_8,L_7]=-t^2L_7,$$
so that the subalgebra generated by $L_1,L_7,L_8$ (and hence also $\LLL'$) is not solvable.
\end{proof}

\begin{lemma}\label{lemma:real_part_closed}
Let $\LLL$ be a graded subalgebra of $\g(A)$ such that each homogeneous element of $\LLL$ is $\ad$-locally finite on $\LLL$. Then $\Psi:=\{\alpha\in\Delta^{re} \ | \ \LLL\cap\g_{\alpha}\neq\{0\}\}$ is a closed set of roots.
\end{lemma}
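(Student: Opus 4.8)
The plan is to verify the defining property of a closed set directly: given $\alpha,\beta\in\Psi$ with $\alpha+\beta\in\Delta$, I must show $\alpha+\beta\in\Psi$, i.e. that $\gamma:=\alpha+\beta$ is again a \emph{real} root meeting $\LLL$. First I fix nonzero $x\in\LLL\cap\g_{\alpha}$ and $y\in\LLL\cap\g_{\beta}$; since $\alpha,\beta$ are real, $\g_{\alpha}=\CC x$ and $\g_{\beta}=\CC y$. Because $\alpha\in\Delta^{re}$ and $\gamma\in\Delta$, Lemma~\ref{lemma:prop36}(2) forces $[\g_{\alpha},\g_{\beta}]\neq\{0\}$, so $z:=[x,y]$ is a nonzero element of $\LLL\cap\g_{\gamma}$. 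This already yields $\LLL\cap\g_{\gamma}\neq\{0\}$, so the entire difficulty is concentrated in proving that $\gamma\in\Delta^{re}$.

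I argue this by contradiction, assuming $\gamma\in\Delta^{im}$. Using that $\omega$ is an automorphism exchanging $\Delta^{im+}$ and $\Delta^{im-}$ while preserving the $\ad$-local-finiteness hypothesis on graded subalgebras, I reduce to the case $\gamma\in\Delta^{im+}$ (replacing $\LLL,x,y$ by their $\omega$-images and $\gamma$ by $-\gamma$). The strategy is then to exhibit a homogeneous $v\in\LLL$ for which $(\ad z)^nv\neq 0$ for all $n\in\NN$: since such iterates lie in the pairwise distinct degrees $\deg(v)+n\gamma$, they are linearly independent, so $z$ is \emph{not} $\ad$-locally finite on $\LLL$, contradicting the hypothesis. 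By Theorem~\ref{thm:adx^ny} (applied with the positive imaginary root $\gamma$ and the real degree of $v$, to the pair $z$ and $v$), it suffices to find $v\in\{x,y\}$ with $[z,v]\neq 0$.

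The main obstacle is ruling out the ``bad case'' $[z,x]=[z,y]=0$, equivalently $(\ad x)^2y=0=(\ad y)^2x$. Here I would invoke $\sll_2$-theory: viewing $M:=\bigoplus_{n}\g_{\beta+n\alpha}$ as a finite-dimensional module over $\g_{(\alpha)}\cong\sll_2(\CC)$ (finite-dimensionality as in Lemma~\ref{lemma:prop36}), the element $z=(\ad x)y$ is a nonzero weight vector annihilated by the raising operator $\ad x$, so its $\alpha^{\vee}$-weight $\la\gamma,\alpha^{\vee}\ra$ is nonnegative; by (\ref{eqn:coroot}) and $(\alpha|\alpha)>0$ this gives $(\gamma|\alpha)\geq 0$, and symmetrically $(\gamma|\beta)\geq 0$. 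Summing yields $(\gamma|\gamma)=(\gamma|\alpha)+(\gamma|\beta)\geq 0$, which together with (\ref{eqn:norm_roots}) forces $(\gamma|\gamma)=0$ and $(\gamma|\alpha)=(\gamma|\beta)=0$. But then $\alpha,\beta\in\Delta^{re}$ with $\gamma=\alpha+\beta\in\Delta^{im+}$ and $(\alpha|\gamma)=(\beta|\gamma)=0$ are exactly the hypotheses of Lemma~\ref{lemma:solvable_affine}, which gives $[e_{\alpha},[e_{\alpha},e_{\beta}]]\neq 0$, i.e. $(\ad x)^2y\neq 0$, contradicting the bad case. Hence one of $[z,x],[z,y]$ is nonzero, the contradiction of the previous paragraph applies, and we conclude $\gamma\in\Delta^{re}$, so $\gamma\in\Psi$ and $\Psi$ is closed.
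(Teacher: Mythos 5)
Your proof is correct, and while it shares its endgame with the paper's proof (a contradiction with $\ad$-local finiteness obtained through Theorem~\ref{thm:adx^ny}, with Lemma~\ref{lemma:solvable_affine} invoked in the degenerate orthogonal configuration), the way you handle the core difficulty is genuinely different. The paper first conjugates by $\WW^*$ so that one of the two real roots is simple (hence both are positive), then uses the root-string Lemma~\ref{lemma:RS} to see that $\gamma+2\alpha$ and $\alpha+2\gamma$ are roots, and splits into cases: if $\gamma+2\alpha$ is imaginary, Lemma~\ref{lemma:prop36}(1) directly gives $[e_{\alpha},[e_{\alpha},e_{\gamma}]]\neq 0$; if both are real, Lemma~\ref{lemma:symm1} yields the orthogonality relations $(\beta|\alpha)=0=(\beta|\gamma)$ needed to apply Lemma~\ref{lemma:solvable_affine}. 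You instead isolate the ``bad case'' $[z,x]=[z,y]=0$ and dispose of it in one stroke by $\sll_2$-weight theory: a nonzero singular vector in a finite-dimensional $\sll_2$-module has nonnegative weight, which forces $(\gamma|\alpha)\geq 0$ and $(\gamma|\beta)\geq 0$, hence $(\gamma|\gamma)=(\gamma|\alpha)+(\gamma|\beta)\geq 0$; since $\gamma$ is imaginary this pins down $(\gamma|\gamma)=0$ and $(\gamma|\alpha)=(\gamma|\beta)=0$, which is exactly the input of Lemma~\ref{lemma:solvable_affine}. This buys a more unified argument: no $\WW^*$-conjugation, no root-string case analysis, and no appeal to Lemma~\ref{lemma:symm1}; what it costs is opening up the $\sll_2$-module structure explicitly, though that is the same module theory already packaged inside the proof of Lemma~\ref{lemma:prop36}, so nothing external is needed. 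Both routes are non-circular, since neither Theorem~\ref{thm:adx^ny} nor Lemma~\ref{lemma:solvable_affine} depends on the present lemma.
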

\begin{proof}
In view of  Lemma~\ref{lemma:rerere}, it is sufficient to show that if $\alpha,\gamma\in\Psi$ then $\alpha+\gamma\notin\Delta^{im}$. Assume for a contradiction that $\beta:=\alpha+\gamma\in\Delta^{im+}$ for some $\alpha,\gamma\in\Psi$ (the case $\beta\in\Delta^{im}_-$ will then also follow, using the action of $\omega$). Up to conjugating $\LLL$ by some element of $\WW^*$, there is no loss of generality in assuming that $\gamma$ is a simple root, and hence that $\alpha,\gamma\in\Delta^{re+}$.

As
$$2(\alpha|\gamma)=(\alpha+\gamma|\alpha+\gamma)-(\alpha|\alpha)-(\gamma|\gamma)<0$$
by (\ref{eqn:norm_roots}), Lemma~\ref{lemma:prop36}(1) yields $[e_{\alpha},e_{\gamma}]\neq 0$. 
On the other hand, $\gamma+2\alpha\in\Delta$ and $\alpha+2\gamma\in\Delta$ by Lemma~\ref{lemma:RS}. If $\gamma+2\alpha\in\Delta^{im+}$, then  
$$4(\gamma+\alpha|\alpha)=(\gamma+2\alpha|\gamma+2\alpha)-(\gamma|\gamma)<0$$
by (\ref{eqn:norm_roots}), and hence $[e_{\alpha},[e_{\alpha},e_{\gamma}]]\neq 0$ by Lemma~\ref{lemma:prop36}(1), so that $[e_{\alpha},e_{\gamma}]\in\LLL$ is not $\ad$-locally finite on $\LLL$ by Theorem~\ref{thm:adx^ny}, a contradiction. Thus $\gamma+2\alpha\in\Delta^{re+}$ and, similarly, $\alpha+2\gamma\in\Delta^{re+}$. Lemma~\ref{lemma:symm1} then yields that
$$(\beta|\alpha)=0=(\beta|\gamma),$$
and hence $[e_{\alpha},[e_{\alpha},e_{\gamma}]]\neq 0$ by Lemma~\ref{lemma:solvable_affine}, again contradicting Theorem~\ref{thm:adx^ny}.
\end{proof}

We are now ready to describe the graded subalgebras of $\g(A)$ all whose elements are $\ad$-locally finite. Let us remark that a subalgebra of $\g(A)$ that contains $\hh$ is automatically graded by \cite[Proposition~1.5]{Kac}.

\begin{theorem}\label{thm:subgA}
Let $\LLL$ be a graded subalgebra of $\g(A)$ such that each homogeneous element of $\LLL$ is $\ad$-locally finite on $\LLL$. Then there exists a closed set of real roots $\Psi\subseteq\Delta^{re}$, and abelian subalgebras $\LLL_0\subseteq\hh$, $\LLL^{im+}\subseteq\nn^{im+}$ and $\LLL^{im-}\subseteq\nn^{im-}$ such that 
\begin{enumerate}
\item
$\LLL=\LLL_0\oplus\g_{\Psi}\oplus\LLL^{im+}\oplus\LLL^{im-}$;
\item
$[\g_{\Psi},\LLL^{im+}]=\{0\}=[\g_{\Psi},\LLL^{im-}]$;
\item
$[\LLL^{im+},\LLL^{im-}]\subseteq \LLL_0\oplus\g_{\Psi}$.
\end{enumerate}
Moreover, denoting by $\LLL^{im}$ the subalgebra generated by $\LLL^{im+}\oplus\LLL^{im-}$, we have the following equivalences:
\begin{enumerate}
\item[(4)]
$\g_{\Psi}$ is a nilpotent subalgebra $\iff$ $\Psi$ does not contain any pair of opposite roots.
\item[(5)]
$\LLL^{im}$ is nilpotent $\iff$ $[\LLL^{im},\LLL_0\cap\LLL^{im}]=\{0\}$ $\iff$ $\LLL^{im}$ is solvable. In that case, $\LLL^{im}$ is nilpotent of degree at most $2$.
\item[(6)]
$\LLL$ is solvable $\iff$ $\LLL^{im}$ and $\g_{\Psi}$ are nilpotent subalgebras $\iff$ $[\LLL,\LLL]$ is nilpotent.
\item[(7)]
$\LLL$ is nilpotent $\iff$ $[\LLL_0,\LLL]=\{0\}$ $\iff$ every homogeneous element of $\LLL$ is $\ad$-locally nilpotent on $\LLL$. In that case, the nilpotency class of $\LLL$ is at most $\max\{2,N_{\Psi}\}$, where $N_{\Psi}$ is the nilpotency class of $\g_{\Psi}$.
\end{enumerate}
\end{theorem}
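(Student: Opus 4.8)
The plan is to read the decomposition off the grading, to prove the abelianness and the vanishing brackets (1)--(3) by feeding any nonzero bracket into the infinite--string machinery of Theorem~\ref{thm:adx^ny} and Theorem~\ref{thm:pre-free}, and finally to deduce the equivalences (4)--(7) by inspecting the lower central and derived series. First I set $\LLL_0:=\LLL\cap\hh$, $\Psi:=\{\alpha\in\Delta^{re}\mid \LLL\cap\g_\alpha\neq\{0\}\}$, and $\LLL^{im\pm}:=\bigoplus_{\alpha\in\Delta^{im\pm}}(\LLL\cap\g_\alpha)$. Since $\LLL$ is graded it is the direct sum of $\LLL_0$, of its real part $\bigoplus_{\alpha\in\Delta^{re}}(\LLL\cap\g_\alpha)$, and of $\LLL^{im+}\oplus\LLL^{im-}$; as $\dim\g_\alpha=1$ for $\alpha\in\Delta^{re}$, the real part is exactly $\g_\Psi$, and $\Psi$ is closed by Lemma~\ref{lemma:real_part_closed}. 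This gives the vector--space decomposition (1).

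To see that $\LLL^{im+}$ is abelian, take nonzero homogeneous $u,v\in\LLL^{im+}$ of degrees $\mu,\nu\in\Delta^{im+}$. If $[u,v]\neq0$, then Theorem~\ref{thm:adx^ny} (with base $u$ of degree $\mu\in\Delta^+$ and actor $v$ of degree $\nu\in\Delta^{im+}$) gives $(\ad v)^nu\neq0$ for all $n$; these lie in pairwise distinct root spaces, hence span an infinite--dimensional subspace of $\LLL$ stable under $\ad v$, contradicting the $\ad$--local finiteness of $v$. The identical argument shows $\LLL^{im-}$ abelian, and, taking instead $u=e_\gamma\in\g_\gamma$ with $\gamma\in\Psi$ (degree in $\Delta^{re}$) and $v\in\LLL^{im+}$, it gives $[\g_\Psi,\LLL^{im+}]=\{0\}$; applying $\omega$ to the corresponding identities yields $[\g_\Psi,\LLL^{im-}]=\{0\}$, proving (2).

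The heart of the proof is (3), and it is where the only real subtlety lies: once (1)--(2) hold there is no infinite $\ad$--string available among the three elements $x,y,z$ below, so $\ad$--local finiteness does \emph{not} directly forbid the bracket from meeting $\nn^{im\pm}$. Let $x\in\LLL\cap\g_\alpha$, $y\in\LLL\cap\g_\beta$ with $\alpha\in\Delta^{im+}$, $\beta\in\Delta^{im-}$, and suppose $z:=[x,y]\neq0$ has degree $\gamma:=\alpha+\beta\in\Delta^{im}$. By the $+/-$ symmetry of the statement it suffices to treat $\gamma\in\Delta^{im+}$, so that $z\in\LLL^{im+}$. Set $\delta:=-\beta\in\Delta^{im+}$, so that $\alpha=\gamma+\delta$ with $\gamma,\delta\in\Delta^{im+}$ and $\alpha\in\Delta^+$. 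If $\gamma,\delta$ are proportional isotropic roots, then $\alpha,\beta,\gamma$ all lie on the line $\CC\delta$, and Lemma~\ref{lemma:Kac_free_Lie}(2) forces $[x,y]=0$ (the degrees $\alpha,\beta$ do not sum to $0$), a contradiction. Otherwise Lemma~\ref{lemma:basic_propI}(2) gives $(\gamma|\delta)<0$, whence
$$(\gamma|\alpha)=(\gamma|\gamma)+(\gamma|\delta)<0$$
because $(\gamma|\gamma)\leq0$ by \eqref{eqn:norm_roots}. Since $\gamma\neq\alpha$ (else $\beta=0$), Theorem~\ref{thm:pre-free} then forces $[x,z]\neq0$, contradicting the abelianness of $\LLL^{im+}$. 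The case $\gamma\in\Delta^{im-}$ is identical, pairing $z\in\LLL^{im-}$ with $y$ and using $(\gamma|\beta)=(\gamma|\gamma)-(\gamma|\alpha)<0$ (with $(\gamma|\alpha)>0$ again from Lemma~\ref{lemma:basic_propI}(2)). Thus $\gamma\in\{0\}\cup\Delta^{re}$, i.e. $z\in\LLL_0\oplus\g_\Psi$, which is (3); the decisive trick is precisely that a nonzero imaginary bracket is trapped in an abelian summand alongside one of its factors.

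Finally I would deduce the equivalences. Statement (4) is the dichotomy of Proposition~\ref{prop:closed_set_real_roots}: $\g_\Psi$ is a nilpotent subalgebra exactly when $\Psi_s=\varnothing$, i.e. $\Psi$ contains no opposite pair. For (5), (1)--(3) and abelianness give $[\LLL^{im},\LLL^{im}]=[\LLL^{im+},\LLL^{im-}]\subseteq\LLL_0\oplus\g_\Psi$, whose $\g_\Psi$--component already centralises $\LLL^{im\pm}$ by (2); hence if $[\LLL^{im},\LLL_0\cap\LLL^{im}]=\{0\}$ then $[\LLL^{im},\LLL^{im}]$ is central in $\LLL^{im}$ and $\LLL^{im}$ is nilpotent of class $\leq2$, whereas if this bracket is nonzero one extracts via \eqref{eqn:prop_inv_form} some $\beta_i^\sharp$ with $(\mu|\beta_i)\neq0$ and applies Lemma~\ref{lemma:xy_non_solvable} to $\{e_{\pm\beta_i},u\}$ to show $\LLL^{im}$ is not even solvable. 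Statements (6) and (7) are then assembled from (1)--(5): using $[\g_\Psi,\LLL^{im\pm}]=\{0\}$, Lemma~\ref{lemma:solvable_hh}, and the uniform nilpotency bound of Proposition~\ref{prop:closed_set_real_roots}, one checks that $[\LLL_0,\LLL]=\{0\}$ forces $\Psi_s=\varnothing$ and the centrality of $\LLL_0$, giving nilpotency of class at most $\max\{2,N_\Psi\}$, and conversely; the solvable case reduces to the nilpotency of $\g_\Psi$ and $\LLL^{im}$ together with the standard equivalence that $\LLL$ is solvable if and only if $[\LLL,\LLL]$ is. The bookkeeping in (6)--(7) is the most laborious part, but it is routine given (1)--(5).
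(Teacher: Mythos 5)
Your proposal is correct, and for (1), (2) and (4)--(7) it is essentially the paper's own argument: the same subspaces $\LLL_0=\LLL\cap\hh$, $\Psi=\{\alpha\in\Delta^{re}\mid \LLL\cap\g_\alpha\neq\{0\}\}$ (closed by Lemma~\ref{lemma:real_part_closed}) and $\LLL^{im\pm}=\LLL\cap\nn^{im\pm}$; abelianness of $\LLL^{im\pm}$ and (2) obtained by playing Theorem~\ref{thm:adx^ny} against $\ad$-local finiteness (phrase the contradiction carefully: any finite-dimensional $\ad v$-invariant subspace containing $u$ must contain all the iterates $(\ad v)^n u$, which lie in pairwise distinct root spaces and are therefore linearly independent --- an invariant infinite-dimensional subspace is not by itself a contradiction); and (4)--(7) assembled from Proposition~\ref{prop:closed_set_real_roots}, Lemma~\ref{lemma:solvable_hh} and Lemma~\ref{lemma:xy_non_solvable} exactly as the paper does, modulo routine Jacobi-identity bookkeeping. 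One small caveat there: in (5) you re-derive Lemma~\ref{lemma:solvable_hh} inline for $\LLL^{im}$, and your sketch omits the step showing that the roots $\beta_i$ appearing in $h$ must be isotropic (otherwise $\LLL^{im}$ contains a copy of $\sll_2(\CC)$ and is already non-solvable); this is what guarantees $\mu\neq\pm\beta_i$, hence the non-proportionality hypothesis of Lemma~\ref{lemma:xy_non_solvable}. Citing Lemma~\ref{lemma:solvable_hh} directly, as the paper does, avoids the issue. The genuinely different step is (3), and yours is simpler than the paper's. The paper, with $\alpha:=\deg(x)$, $\beta:=-\deg(y)$ and bracket degree $\alpha-\beta\in\Delta^{im+}$, must first prove that $\alpha+(\alpha-\beta)$ is a root --- this is where the support/Weyl-group Lemma~\ref{lemma:imaginary_not_connected} enters --- before Lemma~\ref{lemma:basic_propI}(2) yields $(\alpha\,|\,\alpha-\beta)<0$, and it then concludes via Theorem~\ref{thm:pre-free} followed by a second application of Theorem~\ref{thm:adx^ny} (an infinite $\ad x$-string violating local finiteness). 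You instead apply Lemma~\ref{lemma:basic_propI}(2) to the pair $(\gamma,\delta)$ whose sum is $\alpha$, a root by hypothesis, so no support argument is needed: $(\gamma|\delta)<0$ gives $(\gamma|\alpha)=(\gamma|\gamma)+(\gamma|\delta)<0$ by (\ref{eqn:norm_roots}), and a single application of Theorem~\ref{thm:pre-free} then contradicts the abelianness of $\LLL^{im+}$ already secured in (2), the degenerate proportional-isotropic case being correctly killed by Lemma~\ref{lemma:Kac_free_Lie}(2) (and the same scheme works for $\gamma\in\Delta^{im-}$). Your route buys a shorter, purely bilinear-form proof of (3) that bypasses Lemma~\ref{lemma:imaginary_not_connected} entirely; the paper's route derives its contradiction directly from local finiteness rather than from the previously established abelianness, but at the cost of the extra support lemma.
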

\begin{proof}
(1)(2) Since $\LLL$ is graded, it admits a triangular decomposition $$\LLL=\LLL^-\oplus\LLL_0\oplus\LLL^+,$$
where $\LLL^{\pm}:=\LLL\cap\nn^{\pm}$ and $\LLL_0:=\LLL\cap\hh$. Moreover, by Lemma~\ref{lemma:real_part_closed}, $$\Psi:=\{\alpha\in\Delta^{re} \ | \ \LLL\cap\g_{\alpha}\neq\{0\}\}$$ is a closed set of real roots. Setting $\LLL^{im\pm}:=\LLL^{\pm}\cap\nn^{im\pm}$, we have
$$\LLL=\LLL_0\oplus\g_{\Psi}\oplus\LLL^{im+}\oplus\LLL^{im-}.$$
Note also that, in view of the $\ad$-local finiteness assumption on the elements of $\LLL$, Theorem~\ref{thm:adx^ny} implies that $\LLL^{im\pm}$ is abelian and that $[\g_{\Psi},\LLL^{im\pm}]=\{0\}$. 

(3) To show that $[\LLL^{im+},\LLL^{im-}]\subseteq \hh\oplus\g_{\Psi}$, assume for a contradiction that there is some $x\in\LLL^{im+}$ and $y\in\LLL^{im-}$ such that $[x,y]$ is a nonzero element of $\nn^{im+}$ (the case where $[x,y]\in\nn^{im-}$ being symmetric, using the action of $\omega$). Let $\alpha:=\deg(x)$ and $\beta:=-\deg(y)$, so that $\alpha,\beta,\alpha-\beta\in\Delta^{im+}$. Since $[x,y]\neq 0$ and $\alpha\neq \beta$, Lemma~\ref{lemma:Kac_free_Lie}(2) implies that $\alpha,\beta$ (and hence also $\alpha$ and $\alpha-\beta$) are not proportional isotropic roots. Moreover, as $\supp(w\alpha-w\beta)\subseteq\supp(w\alpha)$ for all $w\in\WW$, Lemma~\ref{lemma:imaginary_not_connected} implies that $\alpha+(\alpha-\beta)\in\Delta^{im+}$. Hence $(\alpha|\alpha-\beta)<0$ by Lemma~\ref{lemma:basic_propI}(2), so that $[x,[x,y]]\neq 0$ by Theorem~\ref{thm:pre-free}. Therefore, Theorem~\ref{thm:adx^ny} implies that $(\ad x)^n[x,y]\neq 0$ for all $n\in\NN$, a contradiction. 

(4) This readily follows from Proposition~\ref{prop:closed_set_real_roots}.

(5) Note first that $$\LLL^{im}=\LLL^{im+}\oplus\LLL^{im-}\oplus[\LLL^{im+},\LLL^{im-}]$$ by (2) and (3). If $[\LLL^{im},\LLL_0\cap\LLL^{im}]=\{0\}$, then 
$$[\LLL^{im},\LLL^{im}]=[\LLL^{im+},\LLL^{im-}]\subseteq (\LLL_0\cap\LLL^{im})+\g_{\Psi}$$
by (2) and (3), and hence $[\LLL^{im},[\LLL^{im},\LLL^{im}]]=\{0\}$ by (2), that is, $\LLL^{im}$ is nilpotent of degree at most $2$. If $\LLL^{im}$ is nilpotent, then it is solvable. Finally, if $\LLL^{im}$ is solvable, then 
$$[\LLL_0\cap [\LLL^{im},\LLL^{im}],\LLL^{im}]=\{0\}$$ by Lemma~\ref{lemma:solvable_hh}.
Since $\LLL_0\cap\LLL^{im}\subseteq [\LLL^{im+},\LLL^{im-}]\subseteq [\LLL^{im},\LLL^{im}]$, this implies that $[\LLL^{im},\LLL_0\cap\LLL^{im}]=\{0\}$, as desired.

(6) Assume first that $\LLL$ is solvable. Then $\g_{\Psi}$ is a nilpotent subalgebra: otherwise, $\Psi$ contains a pair of opposite roots by (4), and hence $\g_{\Psi}+[\g_{\Psi},\g_{\Psi}]$ contains a copy of $\sll_2(\CC)$, a contradiction. Moreover, as $\LLL^{im}\subseteq\LLL$ is solvable, it is nilpotent by (5). 

Assume next that $\g_{\Psi}$ and $\LLL^{im}$ are nilpotent subalgebras. Then $\LLL^{(1)}=[\LLL,\LLL]\subseteq \g_{\Psi}+\LLL^{im}$ by (1) and (2). Since $[\g_{\Psi},\LLL^{im}]=\{0\}$ by (2), we deduce that $\LLL^{(1)}$ is nilpotent. 
Finally, if $\LLL^{(1)}$ is nilpotent, then $\LLL$ is solvable.

(7) If every homogeneous element of $\LLL$ is $\ad$-locally nilpotent on $\LLL$, then $[\LLL_0,\LLL]=\{0\}$, for if $h\in\LLL_0$ and $x\in\LLL\cap\g_{\alpha}$ ($\alpha\in\Delta$) are such that $[h,x]=\alpha(h)x\neq 0$, then $(\ad h)^nx=\alpha(h)^nx\neq 0$ for all $n\in\NN$. 

Assume next that $[\LLL_0,\LLL]=\{0\}$. In particular, $\Psi$ does not contain any pair of opposite roots (otherwise, if $\pm\alpha$ is such a pair, then $\CC\alpha^{\vee}=\CC [e_{-\alpha},e_{\alpha}]\subseteq\LLL_0$ but $[\alpha^{\vee},e_{\alpha}]=2e_{\alpha}\neq 0$), and hence $\g_{\Psi}$ is a nilpotent subalgebra by (4). Similarly, $\LLL^{im}$ is nilpotent by (5). Since $[\g_{\Psi},\LLL^{im}]=\{0\}$ by (2), it then follows from an easy induction on $n$ that $\LLL^n\subseteq \g_{\Psi}^n+(\LLL^{im})^n$ for all $n\in\NN$, so that $\LLL$ is nilpotent. Moreover, since $(\LLL^{im})^2=\{0\}$ by (5), the nilpotency class of $\LLL$ is at most $\max\{2,N_{\Psi}\}$.

Finally, if $\LLL$ is nilpotent, then of course every homogeneous element of $\LLL$ is $\ad$-locally nilpotent on $\LLL$.
\end{proof}

Note that the spaces $\g_{\Psi}$ for $\Psi$ a closed set of real roots are described in Proposition~\ref{prop:closed_set_real_roots}. The abelian graded subalgebras of $\nn^{im\pm}$, on the other hand, are described in the following proposition.

\begin{prop}\label{prop:abelian_subalgebra_nim+}
Let $\LLL^{im+}$ be a graded subalgebra of $\nn^{im+}$. Then the following assertions are equivalent:
\begin{enumerate}
\item
All elements of $\LLL^{im+}$ are $\ad$-locally finite on $\LLL^{im+}$. 
\item
$\LLL^{im+}$ is abelian. 
\item
There exists some $w^*\in\WW^*$ such that $w^*\LLL^{im+}$ has the form
$$w^*\LLL^{im+}=\CC x_{\beta_1}\oplus\dots\oplus \CC x_{\beta_n}\oplus \LLL_{\delta_1}\oplus\dots\oplus\LLL_{\delta_m},$$
where
\begin{itemize}
\item
$\beta_i\in\Delta^{im+}_{an}\cap K_0$ and $x_{\beta_i}\in\g_{\beta_i}$ for $i=1,\dots,n$;
\item
$\delta_i\in\Delta^{im+}_{is}\cap K_0$ and $\LLL_{\delta_i}$ is a subspace of $\g_{\NN^*\delta_i}$ for $i=1,\dots,m$;
\item
the union of the $m+n$ subdiagrams $$\supp(\beta_1),\dots,\supp(\beta_n),\supp(\delta_1),\dots,\supp(\delta_m)$$ of $\Gamma(A)$ has $m+n$ distinct connected components (namely, the above $m+n$ subdiagrams).
\end{itemize}
\end{enumerate}
\end{prop}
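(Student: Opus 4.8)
The plan is to establish the cycle $(2)\Rightarrow(1)\Rightarrow(2)$ together with the equivalence $(2)\Leftrightarrow(3)$. The implication $(2)\Rightarrow(1)$ is immediate: if $\LLL^{im+}$ is abelian, then $(\ad x)y=0$ for all $x,y\in\LLL^{im+}$, so every element is $\ad$-locally nilpotent, hence $\ad$-locally finite. For $(3)\Rightarrow(2)$, since $w^*\in\WW^*\subseteq\Aut(\g(A))$, it suffices to check that the displayed normal form is abelian. Each $\CC x_{\beta_i}$ is abelian; each $\LLL_{\delta_i}\subseteq\g_{\NN^*\delta_i}$ is abelian because the positive part $\g_{\NN^*\delta_i}$ of the Heisenberg algebra of Lemma~\ref{lemma:Kac_free_Lie}(2) is abelian (a bracket $[\g_{n\delta_i},\g_{m\delta_i}]$ would lie both in $\g_{(n+m)\delta_i}$ and in the central line $\CC\delta_i^\sharp$, hence vanish); and two summands supported on distinct connected components commute, since the sum of their degrees has disconnected support and is therefore not a root.

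For $(1)\Rightarrow(2)$ I would argue by contradiction using Theorem~\ref{thm:adx^ny}. As $\LLL^{im+}$ is graded, it suffices to kill brackets of homogeneous elements. If $x\in\g_\alpha\cap\LLL^{im+}$ and $y\in\g_\beta\cap\LLL^{im+}$, with $\alpha,\beta\in\Delta^{im+}$, satisfied $[x,y]\neq0$, then—since $\alpha\in\Delta^+$ and $\beta\in\Delta^{im+}$—Theorem~\ref{thm:adx^ny} would give $(\ad y)^nx\neq0$ for all $n\in\NN$. These elements lie in the pairwise distinct root spaces $\g_{\alpha+n\beta}$ and are therefore linearly independent, contradicting the $\ad$-local finiteness of $y$ on $\LLL^{im+}$. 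Hence all such brackets vanish and $\LLL^{im+}$ is abelian.

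The substance of the proposition is $(2)\Rightarrow(3)$. Writing $S:=\{\gamma\in\Delta^{im+}\mid\LLL^{im+}\cap\g_\gamma\neq\{0\}\}$, the first step records the rigidity coming from Corollary~\ref{corintro:main}: if $\gamma,\gamma'\in S$ are non-proportional, then choosing nonzero $x\in\LLL^{im+}\cap\g_\gamma$ and $y\in\LLL^{im+}\cap\g_{\gamma'}$ we have $[x,y]=0$ with $\CC x\neq\CC y$, which forces $\gamma+\gamma'\notin\Delta$. I would then partition $S$ into proportionality classes and pick one representative $\gamma_a$ per class; these are pairwise non-proportional, so $\gamma_a+\gamma_b\notin\Delta$ for $a\neq b$, and Lemma~\ref{lemma:imaginary_not_connected} produces a single $w\in\WW$ with $w\gamma_a\in K_0$ and with the supports $\supp(w\gamma_a)$ forming distinct connected components (every root proportional to $\gamma_a$ shares its support). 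Fixing a lift $w^*\in\WW^*$ of $w$ and replacing $\LLL^{im+}$ by $w^*\LLL^{im+}$, it remains to treat each class.

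In an isotropic class I would take $\delta_a\in K_0$ to be the primitive isotropic root in that direction and set $\LLL_{\delta_a}:=\bigoplus_n(w^*\LLL^{im+}\cap\g_{n\delta_a})\subseteq\g_{\NN^*\delta_a}$; no further constraint is needed, as this space is automatically abelian. In a non-isotropic class, with primitive root $\beta$ (so $(\beta|\beta)<0$), I would invoke Theorem~\ref{thm:pre-free}: for nonzero $u\in\g_{d\beta}$ and $v\in\g_{e\beta}$ one has $(d\beta|e\beta)=de(\beta|\beta)<0$, so $[u,v]=0$ together with abelianness forces $\CC u=\CC v$; hence $w^*\LLL^{im+}$ meets at most one root space $\g_{d\beta}$ in this direction, and does so in a line $\CC x_{\beta'}$ with $\beta':=d\beta\in\Delta^{im+}_{an}\cap K_0$. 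Assembling these pieces over all classes yields the asserted form. The main obstacle is organising this last direction cleanly: one must simultaneously conjugate all classes into $K_0$ with disconnected supports (which is exactly what Lemma~\ref{lemma:imaginary_not_connected} delivers, once the combinatorial input $\gamma_a+\gamma_b\notin\Delta$ has been extracted from abelianness) and, separately, control the internal structure of each proportionality class—the isotropic case being unconstrained via the Heisenberg structure, and the non-isotropic case collapsing to a single line via Theorem~\ref{thm:pre-free}.
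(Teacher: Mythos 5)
Your proposal is correct and follows essentially the same route as the paper's proof: it relies on the same key ingredients (Theorems~\ref{thm:pre-free} and \ref{thm:adx^ny} to force non-commuting elements to generate infinitely many nonzero iterated brackets, Lemma~\ref{lemma:basic_propI}(2) via Corollary~\ref{corintro:main} to conclude that non-proportional support roots $\gamma,\gamma'$ satisfy $\gamma+\gamma'\notin\Delta$, Lemma~\ref{lemma:imaginary_not_connected} to conjugate all representatives into $K_0$ with disconnected supports, and the Heisenberg structure of Lemma~\ref{lemma:Kac_free_Lie}(2) for the isotropic directions). The only difference is organisational: the paper proves (1)$\implies$(3) directly and treats (3)$\implies$(2)$\implies$(1) as clear, whereas you establish (1)$\iff$(2) and (2)$\iff$(3) separately, spelling out details (the collapse of non-isotropic directions to a single line, the transfer of the support condition to whole proportionality classes) that the paper leaves implicit.
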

\begin{proof}
We prove that (1)$\implies$(3), the implications (3)$\implies$(2)$\implies$(1) being clear. If (1) holds, then Theorems~\ref{thm:pre-free} and \ref{thm:adx^ny} imply that $(\alpha|\beta)\geq 0$ for all distinct $\alpha,\beta\in\Delta^{im+}$ such that $\LLL^{im+}\cap\g_{\alpha}\neq\{0\}\neq \LLL^{im+}\cap \g_{\beta}$. By Lemma~\ref{lemma:basic_propI}(2), we thus find some $\gamma_1,\dots,\gamma_n\in\Delta^{im+}_{an}$ and some $\gamma_{n+1},\dots,\gamma_{n+m}\in\Delta^{im+}_{is}$ with $\gamma_i+\gamma_j\notin\Delta$ whenever $i\neq j$, such that $$\LLL^{im+}=\CC x_{\gamma_1}\oplus\dots\oplus \CC x_{\gamma_n}\oplus \LLL_{\gamma_1}\oplus\dots\oplus\LLL_{\gamma_m}$$ for some $x_{\gamma_i}\in\g_{\gamma_i}$ and some subspaces $\LLL_{\gamma_i}\subseteq\g_{\NN^*\gamma_i}$. The statement (3) then follows from Lemma~\ref{lemma:imaginary_not_connected}.
\end{proof}

Note also that, since in a nilpotent subalgebra of $\g(A)$, every element is $\ad$-locally nilpotent (hence $\ad$-locally finite), Theorem~\ref{thm:subgA} gives a complete description of nilpotent graded subalgebras of $\g(A)$. In particular, it has the following corollary. Let $N\in\NN$ be as in the statement of Proposition~\ref{prop:closed_set_real_roots}.

\begin{corollary}\label{corollary:nilp}
Let $\LLL$ be a graded subalgebra of $\g(A)$. Then $\LLL$ is nilpotent if and only if every homogeneous $x\in\LLL$ is $\ad$-locally nilpotent on $\LLL$. In that case, the nilpotency class of $\LLL$ is at most $\max\{2,N\}$.
\end{corollary}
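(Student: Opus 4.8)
The plan is to reduce this corollary almost entirely to Theorem~\ref{thm:subgA}(7). The one mismatch to bridge is that Theorem~\ref{thm:subgA} assumes every homogeneous element of $\LLL$ is $\ad$-locally \emph{finite}, whereas here the hypothesis is $\ad$-local \emph{nilpotence}. So the first thing I would record is that $\ad$-local nilpotence implies $\ad$-local finiteness: given a homogeneous $x\in\LLL$ that is $\ad$-locally nilpotent on $\LLL$ and any $y\in\LLL$, the subspace $V:=\sum_{k\geq 0}\CC(\ad x)^ky$ is finite-dimensional, contains $y$, and satisfies $[x,V]\subseteq V$. Thus, whenever every homogeneous element of $\LLL$ is $\ad$-locally nilpotent, the hypothesis of Theorem~\ref{thm:subgA} holds.

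With that in hand, I would dispatch the equivalence. The forward implication is trivial: a nilpotent $\LLL$ satisfies $(\ad x)^n=0$ on $\LLL$ for some fixed $n$ and all $x$, so every homogeneous $x$ is certainly $\ad$-locally nilpotent. For the converse, assuming every homogeneous element is $\ad$-locally nilpotent, the previous step allows me to invoke Theorem~\ref{thm:subgA}, and the last equivalence of its part~(7) directly yields that $\LLL$ is nilpotent.

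For the nilpotency-class bound, I would start from the estimate of Theorem~\ref{thm:subgA}(7), namely that the class of $\LLL$ is at most $\max\{2,N_{\Psi}\}$, where $\Psi=\{\alpha\in\Delta^{re}\mid\LLL\cap\g_{\alpha}\neq\{0\}\}$ and $N_{\Psi}$ is the nilpotency class of $\g_{\Psi}$. It then suffices to show $N_{\Psi}\leq N$. Since $\LLL$ is nilpotent, part~(7) gives $[\LLL_0,\LLL]=\{0\}$, which (as in the proof of that part) forces $\Psi$ to contain no pair of opposite roots, as otherwise $\LLL$ would contain a copy of $\sll_2(\CC)$. In the language of Proposition~\ref{prop:closed_set_real_roots} this means $\Psi_s=\varnothing$, so that $\g_{\Psi}$ coincides with the largest nilpotent ideal $\g_n$, whose nilpotency class is bounded by $N$ by the final assertion of that proposition. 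Hence $N_{\Psi}\leq N$ and the class of $\LLL$ is at most $\max\{2,N\}$.

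The main obstacle here is minimal, since Theorem~\ref{thm:subgA} already encodes the structural heart of the matter; the only step needing genuine care is the last one, where I must correctly identify $\g_{\Psi}$ with the ideal $\g_n$ of Proposition~\ref{prop:closed_set_real_roots}. That identification rests on the absence of opposite roots in $\Psi$, which is precisely what the condition $[\LLL_0,\LLL]=\{0\}$ supplies.
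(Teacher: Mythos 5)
Your proposal is correct and follows essentially the same route as the paper, whose proof simply cites Theorem~\ref{thm:subgA}(7) and Proposition~\ref{prop:closed_set_real_roots}; you have just made explicit the details the paper leaves implicit. In particular, your bridging observation that $\ad$-local nilpotence implies $\ad$-local finiteness is precisely the remark the paper makes in the paragraph preceding the corollary, and your identification $\g_{\Psi}=\g_n$ (via $\Psi_s=\varnothing$) is the intended way to get the uniform bound $N_\Psi\leq N$.
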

\begin{proof}
This readily follows from Proposition~\ref{prop:closed_set_real_roots} and Theorem~\ref{thm:subgA}(7).
\end{proof}

\begin{remark}\label{remark:nilpotency_class}
Let $\LLL$ be a nonabelian nilpotent graded subalgebra of $\g(A)$ (with nilpotency class $N_{\LLL}\geq 2$), and let $\g_{\Psi}$ be as in Theorem~\ref{thm:subgA}. Then Theorem~\ref{thm:subgA}(7) implies that either $\g_{\Psi}$ is abelian and $N_{\LLL}=2$, or else $N_{\LLL}$ coincides with the nilpotency class of $\g_{\Psi}$. In the latter case, the (proof of the) last statement of Proposition~\ref{prop:closed_set_real_roots} then implies that $N_{\LLL}$ is the supremum of the nilpotency classes of the (finite-dimensional) subalgebras $\g_{\Psi'}$ with $\Psi'$ a nilpotent subset of $\Psi$ (in the terminology of \cite{CM18}). A uniform upper bound for the nilpotency class of such $\g_{\Psi'}$ was obtained in \cite{Cap07}.
\end{remark}

As shown by the following example, it is in general \emph{not} true that in a solvable graded subalgebra $\LLL$ of $\g(A)$, every element is $\ad$-locally finite on $\LLL$.
\begin{example}\label{example:solvable_locfin}
Assume that $A=\begin{psmallmatrix}2 & -2 \\ -2 &2\end{psmallmatrix}$, and let $\alpha_1,\alpha_2$ be the simple roots of the corresponding affine Kac--Moody algebra $\g(A)$. Let $\delta:=\alpha_1+\alpha_2\in\Delta^{im+}$. For each $m\in\NN$, set
$$\LLL_{[m]}:=\g_{\delta}\oplus\g_{\Psi_m}, \quad\textrm{where $\Psi_m:=\{n\delta+\alpha_1 \ | \ n\geq m\}\subseteq\Delta^{re+}$}.$$
Then $\g_{\delta}$ and $\g_{\Psi_m}$ are abelian subalgebras of $\g(A)$, and we have $$[\g_{\delta},\g_{n\delta+\alpha_1}]=\g_{(n+1)\delta+\alpha_1}\quad\textrm{for all $n\in\NN$.}$$
In particular, $\LLL:=\LLL_{[0]}$ is a (graded) subalgebra of $\g(A)$. Moreover, $[\LLL,\LLL]=\g_{\Psi_1}$ is abelian, and hence $\LLL$ is solvable (but not nilpotent, as $\LLL^n=\g_{\Psi_n}$ for all $n\in\NN^*$). Note, however, that the nonzero elements of $\g_{\delta}$ are not $\ad$-locally finite on $\LLL$.

Let now $d\in \hh$ be such that $\delta(d)=1$ and $\alpha_1(d)=\alpha_2(d)=0$. Then $\widehat{\LLL}:=\CC d\oplus\LLL$ is also a graded subalgebra of $\g(A)$, such that $\widehat{\LLL}^n=\LLL_{[1]}$ for all $n\in\NN^*$. Hence $\widehat{\LLL}$ is solvable, but $\widehat{\LLL}^1=\LLL_{[1]}$ is not nilpotent. Note, however, that $\widehat{\LLL}^{(2)}=[\LLL_{[1]},\LLL_{[1]}]=\g_{\Psi_2}$ is nilpotent.
\end{example}

Nevertheless, as shown by the following lemma, the (affine) situation described in Example~\ref{example:solvable_locfin} is the only type of obstruction for a solvable graded subalgebra of $\g(A)$ to satisfy the hypothesis of Theorem~\ref{thm:subgA}.

\begin{lemma}\label{lemma:solvable_implies}
Let $\LLL$ be a solvable graded subalgebra of $\g(A)$, and let $x\in\LLL$ be homogeneous, of degree $\alpha\in\Delta^+$. Assume that there exists some homogeneous $y\in\LLL$, of degree $\beta\in\Delta$, such that $(\ad x)^ny\neq 0$ for all $n\in\NN$. Then $(\alpha|\alpha)=0$. Moreover, if $w\in\WW$ is such that $w\alpha\in K_0$, then $\supp(w\beta)\subseteq\supp(w\alpha)$.
\end{lemma}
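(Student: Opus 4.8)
The plan is to prove the two assertions in turn, exploiting solvability through Lemma~\ref{lemma:xy_non_solvable}, which is the engine that converts a strictly negative inner product with an imaginary root into non-solvability.

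\smallskip
\noindent
\textbf{Step 1: $(\alpha|\alpha)=0$.}
Since $(\ad x)^ny\neq 0$ for all $n$, the element $z_n:=(\ad x)^ny$ is a nonzero element of $\LLL\cap\g_{\beta+n\alpha}$, so in particular $\beta+n\alpha\in\Delta$ for all $n\in\NN$. If $\alpha\in\Delta^{re}$, then by Lemma~\ref{lemma:RS} the string $(\beta+\ZZ\alpha)\cap\Delta$ is finite, forcing $\g_{\beta+n\alpha}=\{0\}$ and hence $z_n=0$ for large $n$, a contradiction; thus $\alpha\in\Delta^{im+}$ and $(\alpha|\alpha)\leq 0$ by (\ref{eqn:norm_roots}). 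To exclude $(\alpha|\alpha)<0$, I would observe that $(\alpha|\beta+n\alpha)=(\alpha|\beta)+n(\alpha|\alpha)\to-\infty$, so this quantity is negative for some large $n$. For such an $n$, the elements $x$ and $z_n$ lie in $\LLL$, have distinct degrees (so $\CC x\neq\CC z_n$), and satisfy $\alpha\in\Delta^{im}$ with $(\alpha|\deg z_n)<0$; hence Lemma~\ref{lemma:xy_non_solvable} shows that the subalgebra they generate — a subalgebra of the solvable $\LLL$ — is not solvable, a contradiction. Therefore $(\alpha|\alpha)=0$, i.e.\ $\alpha\in\Delta^{im+}_{is}$.

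\smallskip
\noindent
\textbf{Step 2: reduction and $(\alpha|\beta)=0$.}
Fix $w\in\WW$ with $w\alpha\in K_0$ and choose a lift $w^*\in\WW^*$. Replacing $(\LLL,x,y)$ by $(w^*\LLL,w^*x,w^*y)$ preserves solvability as well as the relation $(\ad w^*x)^n(w^*y)=w^*((\ad x)^ny)\neq 0$, so I may assume $\alpha\in K_0$ and must prove $\supp(\beta)\subseteq\supp(\alpha)=:J$; by (\ref{eqn:isotropic}) the subdiagram $J$ is of affine type. Next, choosing $n$ large enough that $\beta+n\alpha\in\Delta^+$ (possible since $\height(\beta+n\alpha)\to+\infty$) and replacing $y$ by $z_n$, I may further assume $\beta\in\Delta^+$: indeed, once $\supp(\beta+n\alpha)\subseteq J$ is established, $\supp(\beta)\subseteq\supp(\beta+n\alpha)\cup\supp(n\alpha)\subseteq J$ since $\supp(n\alpha)=J$. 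Now $(\alpha|\beta)\leq 0$ because $\alpha\in K_0$ and $\beta\in Q_+$, while $(\alpha|\beta)<0$ is impossible by Lemma~\ref{lemma:xy_non_solvable} (here $\beta\neq\alpha$, as $(\alpha|\alpha)=0$, so $\CC x\neq\CC y$). Hence $(\alpha|\beta)=0$.

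\smallskip
\noindent
\textbf{Step 3: combinatorial conclusion.}
Write $\beta=\beta_J+\beta'$ with $\supp(\beta_J)\subseteq J$ and $\supp(\beta')\subseteq I\setminus J$. Since $\alpha$ is isotropic and lies in $K_0$, (\ref{eqn:isotropic_zero}) gives $(\alpha|\alpha_j)=0$ for every $\alpha_j\in J$, so $(\alpha|\beta_J)=0$ and therefore $(\alpha|\beta')=(\alpha|\beta)=0$. Writing $\alpha=\sum_{j\in J}k_j\alpha_j$ and $\beta'=\sum_{i}k_i'\alpha_i$ with strictly positive coefficients on the respective supports, I expand
$$0=(\alpha|\beta')=\sum_{j\in J}\sum_{i\in\supp(\beta')}k_jk_i'(\alpha_j|\alpha_i).$$
Every summand is $\leq 0$ because $(\alpha_j|\alpha_i)\leq 0$ for $j\neq i$, so each vanishes, meaning there is no edge of $\Gamma(A)$ between $J$ and $\supp(\beta')$. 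But $\alpha+\beta\in\Delta^+$ (as $[x,y]\neq 0$), so its support $J\cup\supp(\beta')$ is connected; if $\beta'\neq 0$ this set would be disconnected, a contradiction. Hence $\beta'=0$ and $\supp(\beta)\subseteq J$, as required.

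\smallskip
The routine parts are the two reductions (conjugating by $\WW^*$ and shifting $y$ by a power of $\ad x$), which I expect to be bookkeeping. The main conceptual point — and the step I expect to be the genuine obstacle — is recognizing that solvability forces the \emph{exact} equality $(\alpha|\beta)=0$ rather than a mere inequality, and then translating this single scalar vanishing into the vanishing of \emph{all} cross inner products $(\alpha_j|\alpha_i)$, which is what disconnects $\supp(\alpha+\beta)$ and closes the argument; the sign conditions in (\ref{eqn:norm_roots}) and the structure of $K_0$ together with (\ref{eqn:isotropic_zero}) are precisely what make this rigidity available.
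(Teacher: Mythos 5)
Your proof is correct, and while your Step 1 coincides with the paper's opening (real $\alpha$ is excluded via Lemma~\ref{lemma:RS}, and $(\alpha|\alpha)<0$ is killed by feeding $x$ and $(\ad x)^ny$ into Lemma~\ref{lemma:xy_non_solvable}), the second half takes a genuinely different route. The paper, after reducing to $\beta\in\Delta^+$, only extracts the inequality $(\alpha|\beta)\geq 0$ from Lemma~\ref{lemma:xy_non_solvable} and then splits into cases on $\beta$: if $\beta\in\Delta^{im+}$, Lemma~\ref{lemma:basic_propI}(2) forces $\alpha,\beta$ to be proportional isotropic roots, whence $[x,y]=0$ by the Heisenberg structure (Lemma~\ref{lemma:Kac_free_Lie}(2)), a contradiction; so $\beta\in\Delta^{re+}$, and the support inclusion is then read off from Lemma~\ref{lemma:isotropic+real}(1), using connectedness of $\supp(w\alpha)\cup\supp(w\beta)$. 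You instead conjugate $\alpha$ into $K_0$ first, combine the $K_0$-inequality $(\alpha|\beta)\leq 0$ with Lemma~\ref{lemma:xy_non_solvable} to pin down the \emph{exact} vanishing $(\alpha|\beta)=0$, and then run the support-splitting computation directly on $\beta$ --- this is essentially the computation inside the paper's proof of Lemma~\ref{lemma:isotropic+real}(1), which (as you implicitly noticed) never uses that the second root is real, only that it is a positive root with $(\alpha|\beta)\geq 0$ and that $\alpha\in K_0$ is isotropic. What your route buys is a uniform argument with no real/imaginary dichotomy on $\beta$ and no appeal to Lemmas~\ref{lemma:basic_propI}(2), \ref{lemma:Kac_free_Lie}(2) or \ref{lemma:isotropic+real}; what the paper's buys is brevity, by reusing Lemma~\ref{lemma:isotropic+real} as a black box and staying $\WW$-equivariant without an explicit conjugation. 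Your two reductions are indeed routine and handled correctly (in particular the observation $\supp(\beta)\subseteq\supp(\beta+n\alpha)\cup\supp(\alpha)$, and the check $\CC x\neq\CC z_n$ via distinct degrees, valid for all but at most one $n$).
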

\begin{proof}
If $\alpha\in\Delta^{re}$, then $x$ is $\ad$-locally nilpotent on $\g(A)$ by Lemma~\ref{lemma:RS}, and hence also on $\LLL$. Thus $\alpha\in\Delta^{im+}$ and hence $(\alpha|\alpha)\leq 0$ by Lemma~\ref{lemma:basic_propI}(1). Up to replacing $y$ with $(\ad x)^ny$ for some large enough $n$, we may moreover assume that $\beta\in\Delta^+$.

If $(\alpha|\alpha)<0$ or if $(\alpha|\beta)<0$, then $(\alpha|\beta+n\alpha)=(\alpha|\beta)+n(\alpha|\alpha)<0$ for some large enough $n\in\NN$. But then $x$ and $(\ad x)^ny\in\LLL$ generate a non-solvable subalgebra of $\LLL$ by Lemma~\ref{lemma:xy_non_solvable}, a contradiction. Thus $(\alpha|\alpha)=0$ and $(\alpha|\beta)\geq 0$.

If $\beta\in\Delta^{im+}$, then Lemma~\ref{lemma:basic_propI}(2) implies that $\alpha,\beta$ are proportional isotropic roots, and hence $[x,y]=0$ by Lemma~\ref{lemma:Kac_free_Lie}(2), a contradiction. Thus $\beta\in\Delta^{re+}$, and the claim follows from Lemma~\ref{lemma:isotropic+real}(1) (note that $\supp(w\alpha)\cup\supp(w\beta)$ is connected, for otherwise $\alpha+\beta\notin\Delta$, contradicting the fact that $[x,y]\neq 0$).
\end{proof}

\begin{corollary}\label{corollary:solvable_non_affine}
Assume that $\Gamma(A)$ does not contain any subdiagram of affine type. Let $\LLL$ be a graded subalgebra of $\g(A)$.
Then the following assertions are equivalent:
\begin{enumerate}
\item
$\LLL$ is solvable.
\item
$[\hh\cap\LLL^1,\LLL]=\{0\}$ and every homogeneous element of $\LLL$ is $\ad$-locally finite on $\LLL$.
\item
$\LLL^1$ is nilpotent.
\end{enumerate}
\end{corollary}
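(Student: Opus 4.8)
The plan is to prove the cycle of implications $(1)\Rightarrow(2)\Rightarrow(3)\Rightarrow(1)$, putting essentially all the work into $(1)\Rightarrow(2)$; the other two implications will follow quickly from the structure theorem (Theorem~\ref{thm:subgA}) and from a general fact about Lie algebras. For the easy closing implication $(3)\Rightarrow(1)$, I would argue that if $\LLL^1=[\LLL,\LLL]$ is nilpotent then it is in particular solvable, and since $\LLL^{(1)}=\LLL^1$ the derived series of $\LLL$ terminates, so $\LLL$ is solvable. This uses no structure at all.

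For the main implication $(1)\Rightarrow(2)$: the condition $[\hh\cap\LLL^1,\LLL]=\{0\}$ is exactly Lemma~\ref{lemma:solvable_hh}, so the content is $\ad$-local finiteness, which I would check on homogeneous elements $x$ of degree $\alpha$. If $\alpha=0$ then $x\in\hh$ acts semisimply and is $\ad$-locally finite, and if $\alpha\in\Delta^{re}$ then $x$ is $\ad$-locally nilpotent by Lemma~\ref{lemma:RS}; the crux is $\alpha\in\Delta^{im}$. The first observation is that for homogeneous $x$ of nonzero degree, being $\ad$-locally finite on $\LLL$ is the same as being $\ad$-locally nilpotent on $\LLL$, since the iterates $(\ad x)^n y$ of a homogeneous $y$ lie in pairwise distinct root spaces, so a finite-dimensional invariant subspace forces them to vanish eventually. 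Suppose then that $x$ of degree $\alpha\in\Delta^{im+}$ is not $\ad$-locally nilpotent; then some homogeneous $y\in\LLL$ satisfies $(\ad x)^n y\neq 0$ for all $n$, and Lemma~\ref{lemma:solvable_implies} forces $(\alpha|\alpha)=0$. By Lemma~\ref{lemma:csupp} there is $w\in\WW$ with $w\alpha\in K_0$ and $\supp(w\alpha)\subseteq\supp(\alpha)$; since the form is $\WW$-invariant, $(w\alpha|w\alpha)=0$, so by (\ref{eqn:isotropic}) the subdiagram $\supp(w\alpha)$ of $\Gamma(A)$ is of affine type, contradicting the hypothesis. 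The case $\alpha\in\Delta^{im-}$ reduces to the previous one via the Chevalley involution $\omega$: the subalgebra $\omega(\LLL)$ is again a solvable graded subalgebra of $\g(A)$, $\omega(x)$ has degree $-\alpha\in\Delta^{im+}$, and $\ad$-local finiteness is preserved by the automorphism $\omega$.

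For $(2)\Rightarrow(3)$: under (2) the $\ad$-local finiteness hypothesis holds, so Theorem~\ref{thm:subgA} applies and gives the decomposition $\LLL=\LLL_0\oplus\g_{\Psi}\oplus\LLL^{im+}\oplus\LLL^{im-}$. I would first note that $\Psi$ contains no pair of opposite roots, for if $\pm\gamma\in\Psi$ then $\gamma^{\vee}=[e_{-\gamma},e_{\gamma}]\in\hh\cap\LLL^1$ while $[\gamma^{\vee},e_{\gamma}]=2e_{\gamma}\neq 0$, contradicting the first half of (2); hence $\g_{\Psi}$ is nilpotent by Theorem~\ref{thm:subgA}(4). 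Next, $\LLL_0\cap\LLL^{im}\subseteq[\LLL^{im+},\LLL^{im-}]\subseteq\hh\cap\LLL^1$, so $[\LLL^{im},\LLL_0\cap\LLL^{im}]\subseteq[\LLL,\hh\cap\LLL^1]=\{0\}$ by (2), whence $\LLL^{im}$ is nilpotent by Theorem~\ref{thm:subgA}(5). Finally, Theorem~\ref{thm:subgA}(6) yields that $[\LLL,\LLL]=\LLL^1$ is nilpotent, which is (3), closing the cycle.

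The main obstacle is the $\ad$-local finiteness half of $(1)\Rightarrow(2)$: one must first recognize that $\ad$-local finiteness coincides with $\ad$-local nilpotence on homogeneous elements of nonzero degree, and then convert the conclusion $(\alpha|\alpha)=0$ of Lemma~\ref{lemma:solvable_implies} into a forbidden affine subdiagram through the isotropy criterion (\ref{eqn:isotropic}). The no-affine-subdiagram hypothesis enters precisely at this point, and it is exactly what rules out the affine phenomenon exhibited in Example~\ref{example:solvable_locfin}.
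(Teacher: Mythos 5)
Your proposal is correct and follows essentially the same route as the paper's own proof: the implication (3)$\Rightarrow$(1) is immediate, (1)$\Rightarrow$(2) combines Lemma~\ref{lemma:solvable_hh} with Lemma~\ref{lemma:solvable_implies} and the observation that, under the no-affine-subdiagram hypothesis and (\ref{eqn:isotropic}), no positive imaginary root can be isotropic, and (2)$\Rightarrow$(3) applies Theorem~\ref{thm:subgA}(4)--(6) exactly as the paper does. The only difference is one of detail, not of substance: you spell out steps the paper compresses into a parenthetical, namely the equivalence of $\ad$-local finiteness and $\ad$-local nilpotence for homogeneous elements of nonzero degree, the passage to $K_0$ via Lemma~\ref{lemma:csupp} and $\WW$-invariance of the form, and the reduction of $\Delta^{im-}$ to $\Delta^{im+}$ via the Chevalley involution.
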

\begin{proof}
The implication (3)$\implies$(1) is clear. If $\LLL$ is solvable, then the homogeneous elements of $\LLL$ are $\ad$-locally finite on $\LLL$ by Lemma~\ref{lemma:solvable_implies} (i.e. by assumption and (\ref{eqn:isotropic}), $\g(A)$ has no isotropic roots), and $[\hh\cap\LLL^1,\LLL]=\{0\}$ by Lemma~\ref{lemma:solvable_hh}, proving (1)$\implies$(2). Finally, assume that (2) holds. Then $\LLL$ has a decomposition $\LLL=\LLL_0\oplus\g_{\Psi}\oplus\LLL^{im+}\oplus\LLL^{im-}$ as in Theorem~\ref{thm:subgA}(1). Moreover, Theorem~\ref{thm:subgA}(4) implies that $\g_{\Psi}$ is a nilpotent subalgebra, because if $\pm\alpha\in\Psi$, then $2e_{\alpha}=[[e_{-\alpha},e_{\alpha}],e_{\alpha}]\in [\hh\cap\LLL^1,\LLL]$, contradicting (2). Similarly, Theorem~\ref{thm:subgA}(5) implies that $\LLL^{im}$ is a nilpotent subalgebra, because $\hh\cap\LLL^{im}\subseteq \hh\cap [\LLL^{im+},\LLL^{im-}]\subseteq \hh\cap\LLL^1$. Therefore, Theorem~\ref{thm:subgA}(6) implies that $\LLL^1$ is nilpotent, proving (2)$\implies$(3).
\end{proof}
As illustrated by Example~\ref{example:solvable_locfin}, the implications (1)$\implies$(2) and (1)$\implies$(3) in Corollary~\ref{corollary:solvable_non_affine} fail as soon as $\Gamma(A)$ contains a subdiagram of affine type. Nevertheless, a weaker form of the equivalence (1)$\iff$(3) can still be proved in general, as shown by the following theorem.

\begin{theorem}\label{thm:solvable_general}
Let $\LLL$ be a graded subalgebra of $\g(A)$. Then $\LLL$ is solvable if and only if $\LLL^{(2)}$ is nilpotent.
\end{theorem}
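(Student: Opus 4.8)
The plan is to prove the non-trivial implication ``$\LLL$ solvable $\Rightarrow\LLL^{(2)}$ nilpotent''; the converse is immediate, since a nilpotent $\LLL^{(2)}$ is solvable and $(\LLL^{(2)})^{(m)}=\LLL^{(m+2)}$, so that $\LLL^{(n)}=\{0\}$ for some $n$.

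So assume $\LLL$ is solvable. As $\LLL^{(2)}$ is again a graded subalgebra of $\g(A)$, Corollary~\ref{corollary:nilp} reduces the goal to showing that every homogeneous element of $\LLL^{(2)}$ is $\ad$-locally nilpotent on $\LLL^{(2)}$. First I would dispose of the ``harmless'' degrees. A homogeneous element of degree $0$ lies in $\hh\cap\LLL^{(2)}\subseteq\hh\cap\LLL^1$, which is central in $\LLL$ by Lemma~\ref{lemma:solvable_hh}; a real root vector is $\ad$-locally nilpotent on all of $\g(A)$ because its root strings are finite (Lemma~\ref{lemma:RS}); and if a homogeneous $x\in\LLL$ of non-isotropic imaginary degree $\alpha$ satisfied $(\ad x)^ny\neq 0$ for all $n$ for some homogeneous $y\in\LLL$, then, since $(\alpha|\alpha)<0$, for a suitable large $n$ the elements $x$ and $(\ad x)^ny$ would have degrees with negative scalar product and distinct lines, so that Lemma~\ref{lemma:xy_non_solvable} would exhibit a non-solvable subalgebra of $\LLL$. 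Hence each such element is $\ad$-locally nilpotent on $\LLL$, a fortiori on $\LLL^{(2)}\subseteq\LLL$, and everything reduces to the homogeneous elements of $\LLL^{(2)}$ of \emph{isotropic imaginary} degree.

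The crux is then the claim that $\LLL^{(2)}$ has no such element at all, i.e. $\LLL^{(2)}_{\rho}=\{0\}$ for every isotropic $\rho\in\Delta^{im+}$ (the case $\rho\in\Delta^{im-}$ being obtained through $\omega$). Since $\LLL^{(1)}=[\LLL,\LLL]$ is graded, one has
\[
\LLL^{(2)}_{\rho}=\sum_{\gamma+\gamma'=\rho}[\LLL^{(1)}_{\gamma},\LLL^{(1)}_{\gamma'}],
\]
and I would show that each summand vanishes by arguing that a nonzero bracket $[a,b]$ with $a\in\LLL^{(1)}_{\gamma}$, $b\in\LLL^{(1)}_{\gamma'}$ and $\gamma+\gamma'=\rho$ yields a non-solvable subalgebra of $\LLL$, contradicting solvability. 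The organising principle is the dichotomy furnished by $(\gamma|\rho)+(\gamma'|\rho)=(\rho|\rho)=0$: either one of $(\gamma|\rho),(\gamma'|\rho)$ is strictly negative, or both vanish.

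If $\gamma=0$ or $\gamma'=0$, then $[a,b]=0$ by the centrality of $\hh\cap\LLL^1$ (Lemma~\ref{lemma:solvable_hh}). If, say, $(\gamma|\rho)<0$, then $[a,b]\in\g_{\rho}$ is a nonzero element of imaginary degree $\rho\neq\gamma$, and Lemma~\ref{lemma:xy_non_solvable} applied to the pair $([a,b],a)$ shows that $\langle[a,b],a\rangle\subseteq\LLL$ is not solvable; the case $(\gamma'|\rho)<0$ is symmetric, pairing $[a,b]$ with $b$. In the remaining orthogonal case $(\gamma|\rho)=(\gamma'|\rho)=0$ one computes $(\gamma|\gamma)=(\gamma'|\gamma')$, and I would split according to the types of $\gamma,\gamma'$: if both are real this is precisely the hypothesis of Lemma~\ref{lemma:solvable_affine}, again producing a non-solvable subalgebra; a real/imaginary mixture is impossible, since then $(\gamma|\gamma)>0\geq(\gamma'|\gamma')$ by (\ref{eqn:norm_roots}); and if both are imaginary, Lemma~\ref{lemma:basic_propI} forces $\gamma$ and $\gamma'$ to be proportional isotropic roots (directly via Lemma~\ref{lemma:basic_propI}(1) and (2) when they have the same sign, and after writing $\gamma=\rho+(-\gamma')$ and invoking Lemma~\ref{lemma:basic_propI}(2) when they have opposite signs), whence $[a,b]=0$ by the Heisenberg relations of Lemma~\ref{lemma:Kac_free_Lie}(2). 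This establishes $\LLL^{(2)}_{\rho}=\{0\}$, and together with the second paragraph it completes the proof. I expect the main difficulty to lie exactly in this exhaustive analysis of the pairs $(\gamma,\gamma')$: routing each configuration to the correct non-solvability input (Lemma~\ref{lemma:xy_non_solvable}, Lemma~\ref{lemma:solvable_affine}, or the norm/Heisenberg vanishing) and, in particular, verifying that the orthogonal imaginary sub-cases genuinely collapse to proportional isotropic roots, so that the bracket is forced to vanish.
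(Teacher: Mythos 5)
Your proof is correct, and its skeleton is the paper's: reduce via Corollary~\ref{corollary:nilp} to $\ad$-local nilpotence of the homogeneous elements of $\LLL^{(2)}$, dispose of degree-zero, real, and non-isotropic imaginary degrees (your inlined argument for the last case is exactly the content of the paper's Lemma~\ref{lemma:solvable_implies}), and then kill the isotropic imaginary components of $\LLL^{(2)}$ by decomposing them into brackets $[\LLL^{(1)}_{\gamma},\LLL^{(1)}_{\gamma'}]$ and routing each configuration to Lemma~\ref{lemma:xy_non_solvable}, Lemma~\ref{lemma:solvable_affine}, or the Heisenberg vanishing of Lemma~\ref{lemma:Kac_free_Lie}(2). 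Where you genuinely diverge is in how the configurations $(\gamma,\gamma')$ are sorted. The paper only extracts the inequalities $(\gamma_i|\alpha)\geq 0$ from Lemma~\ref{lemma:xy_non_solvable}, and then needs the $K_0$-normalisation, Lemma~\ref{lemma:isotropic+real}(1), equations~(\ref{eqn:isotropic_zero}) and~(\ref{eqn:isotropic_multiple}), and Lemma~\ref{lemma:imaginary_not_connected} to upgrade those inequalities to the exact orthogonality required by Lemma~\ref{lemma:solvable_affine}, and to show that $\alpha+\gamma_1$ is a root before Lemma~\ref{lemma:basic_propI}(2) can be applied. You instead observe that $(\gamma|\rho)+(\gamma'|\rho)=(\rho|\rho)=0$ turns ``neither strictly negative'' into ``both exactly zero'', which eliminates the real/imaginary mixed case by a pure norm comparison, lands the both-real case directly in the hypotheses of Lemma~\ref{lemma:solvable_affine}, and lets you invoke Lemma~\ref{lemma:basic_propI}(2) only for pairs whose sum is already known to be a root --- $(\gamma,\gamma')$ summing to $\rho$, or $(\rho,-\gamma')$ summing to $\gamma$. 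This buys a shorter argument with no support-theoretic input at all. The only points you leave implicit --- passing to a homogeneous witness $y$ by comparing graded components, and the fact that proportional isotropic roots are integer multiples of a common root so that Lemma~\ref{lemma:Kac_free_Lie}(2) applies --- are equally implicit in the paper's own proof, so they are not gaps relative to it.
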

\begin{proof}
If $\LLL^{(2)}$ is nilpotent, then it is solvable and hence $\LLL$ is solvable. Assume now that $\LLL$ is solvable. To prove that $\LLL^{(2)}$ is nilpotent, it is sufficient to prove by Corollary~\ref{corollary:nilp} that every homogeneous element of $\LLL^{(2)}$ is $\ad$-locally nilpotent on $\LLL^{(2)}$. Assume for a contradiction that there exist some $x,y\in \LLL^{(2)}$ such that $(\ad x)^ny\neq 0$ for all $n\in\NN$. 

Set $\alpha:=\deg(x)$ and $\beta:=\deg(y)$. Note that $\alpha,\beta\in\Delta$ by Lemma~\ref{lemma:solvable_hh}, and hence $\alpha\in\Delta^{im}$ by Lemma~\ref{lemma:RS}. Up to using the action of $\omega$, we may assume that $\alpha\in\Delta^{im+}$. Lemma~\ref{lemma:solvable_implies} then implies that $\alpha\in\Delta_{is}^{im+}$. Up to using the $\WW^*$-action, we may then assume that $\alpha\in K_0$, and hence that $\supp(\alpha)$ is of affine type by (\ref{eqn:isotropic}).

Since $x\in \LLL^{(2)}$, there exist some homogeneous $x_1,x_2\in\LLL^{1}$ with $[x_1,x_2]\neq 0$ such that $\gamma_1+\gamma_2=\alpha$, where $\gamma_1:=\deg(x_1)$ and $\gamma_2:=\deg(x_2)$. Note that $\gamma_1,\gamma_2\in\Delta$ by Lemma~\ref{lemma:solvable_hh}. Moreover, Lemma~\ref{lemma:xy_non_solvable} yields
\begin{equation}\label{eqn:gageqzero}
(\gamma_1|\alpha)\geq 0\quad\textrm{and}\quad(\gamma_2|\alpha)\geq 0.
\end{equation}

If $\gamma_i\in\Delta^{re}$ for some $i\in\{1,2\}$, then (\ref{eqn:gageqzero}) and Lemma~\ref{lemma:isotropic+real}(1) imply that $\supp(\gamma_i)\subseteq\supp(\alpha)$ (note that $\supp(\gamma_i)\cup\supp(\alpha)$ is connected as $\gamma_1+\gamma_2=\alpha$). Since $\gamma_1+\gamma_2=\alpha$, we then have $\supp(\gamma_1),\supp(\gamma_2)\subseteq\supp(\alpha)$, and hence $\gamma_1,\gamma_2\in\Delta^{re}$ by (\ref{eqn:isotropic_multiple}). Moreover, $(\gamma_1|\alpha)=0=(\gamma_2|\alpha)$ by (\ref{eqn:isotropic_zero}), contradicting Lemma~\ref{lemma:solvable_affine}. Therefore, $\gamma_1,\gamma_2\in\Delta^{im}$.

Up to permuting $\gamma_1$ and $\gamma_2$, we may assume that $\gamma_1\in\Delta^{im+}$. If $\gamma_2\in\Delta^{im+}$, then $\supp(w\gamma_1)\subseteq\supp(w\alpha)$ for all $w\in\WW$ (because $w\alpha\in\Delta^{im+}$ is the sum of $w\gamma_1\in\Delta^{im+}$ and $w\gamma_2\in\Delta^{im+}$), and if $\gamma_2\in\Delta^{im-}$, then $\supp(w\alpha)\subseteq\supp(w\gamma_1)$ for all $w\in\WW$ (because $w\gamma_1\in\Delta^{im+}$ is the sum of $w\alpha\in\Delta^{im+}$ and $-w\gamma_2\in\Delta^{im+}$). In both cases, Lemma~\ref{lemma:imaginary_not_connected} implies that $\alpha+\gamma_1\in\Delta^{im+}$. We then deduce from (\ref{eqn:gageqzero}) and Lemma~\ref{lemma:basic_propI}(2) that $\alpha$ and $\gamma_1$ are proportional isotropic roots. But then $\gamma_1,\gamma_2$ are also proportional isotropic roots, so that $[x_1,x_2]=0$ by Lemma~\ref{lemma:Kac_free_Lie}(2), a contradiction.
\end{proof}

We conclude this section by illustrating Theorem~\ref{thm:subgA} and Proposition~\ref{prop:abelian_subalgebra_nim+} with some examples of nilpotent graded subalgebras of $\g(A)$.

\begin{example}
Let $\Psi\subseteq\Delta^{re}$ be a closed set of real roots such that $\Psi\cap-\Psi=\varnothing$. Then $\LLL:=\g_{\Psi}$ is a nilpotent graded subalgebra of $\g(A)$ by Proposition~\ref{prop:closed_set_real_roots}.
\end{example}

\begin{example}
Let $\delta\in\Delta_{is}^{im+}$, and let $\LLL_{\delta}$ be a graded subalgebra of the (abelian) Lie algebra $\g_{\NN^*\delta}$. Let also $\Psi\subseteq\Delta^{re+}$ be a closed set of roots such that $[\g_{\Psi},\LLL_{\delta}]=\{0\}$. Then $\LLL:=\g_{\Psi}\oplus\LLL_{\delta}$ is a nilpotent graded subalgebra of $\g(A)$.

For instance, if $A$ is of type $A_{n-1}^{(1)}$ for some $n\geq 4$, then $\g(A)$ is a double $1$-dimensional extension of $\sll_n(\CC[t,t\inv])$ (see \cite[Theorem~7.4]{Kac}). Writing $E_{ij}$ for the matrix of $\sll_n(\CC[t,t\inv])$ with a ``$1$'' in position $(i,j)$ and ``$0$'' elsewhere, one could then take $\g_{\Psi}=\CC E_{12}$ and $\LLL_{\delta}=\CC (tE_{33}-tE_{44})$.
\end{example}

\begin{example}
Let $\beta\in\Delta^{im+}_{an}$ and $i\in I$ be such that $\dim\g_{\beta+\alpha_i}<\dim\g_{\beta}$. Then there exists some nonzero $x\in\g_{\beta}$ such that $[e_i,x]=0$. In particular, $\LLL:=\g_{\Psi}\oplus\CC x$ is a nilpotent graded subalgebra of $\g(A)$, where $\Psi:=\{\alpha_i\}$.
\end{example}

\begin{example}
Assume that $A=\begin{psmallmatrix}2&-2&-1\\ -2 &2& -1\\ -1&-1&2\end{psmallmatrix}$. Consider the nonzero elements 
$$y:=[e_3,[e_2,e_1]]+2[e_2,[e_3,e_1]]\quad\textrm{and}\quad x:=s_1^*y=[[e_1,e_3],[e_1,e_2]]+[e_3,[e_1,[e_2,e_1]]]$$
of $\g(A)$ (see \cite[Equation (4.7) on page 69]{KMGbook}). Then $[f_1,y]=0$, and hence also $[e_1,x]=s_1^*[f_1,y]=0$. Note, moreover, that $$y^*:=\omega(y)=-[f_3,[f_2,f_1]]-2[f_2,[f_3,f_1]]\in\nn^{im-}\quad\textrm{and}\quad x\in\nn^{im+}.$$
Finally, a straightforward computation yields that
$$[y^*,x]=-24 e_1.$$
Consider the closed set $\Psi:=\{\alpha_1\}\subseteq\Delta^{re}$ and set $\LLL^{im+}:=\CC x$ and $\LLL^{im-}:=\CC y^*$. Then 
$$\LLL:=\g_{\Psi}\oplus\LLL^{im+}\oplus\LLL^{im-}$$
is a graded subalgebra of $\g(A)$ isomorphic to the three-dimensional Heisenberg algebra. In particular, $\LLL$ is nilpotent of degree $2$, whereas $\g_{\Psi}$ is abelian: this shows that the upper bound on the nilpotency class provided in Theorem~\ref{thm:subgA}(7) cannot be improved.
\end{example}

\bibliographystyle{amsalpha} 
\bibliography{these}

\end{document}